\newtheorem{theorem}{Theorem}[section]
\newtheorem{lemma}[theorem]{Lemma} 
\newtheorem{corollary}[theorem]{Corollary}
\newtheorem{proposition}[theorem]{Proposition} 
\newtheorem{definition}[theorem]{Definition}
\newtheorem*{claim*}{Claim}
\newtheorem{remark}[theorem]{Remark}
\def\La{\ensuremath{\mathsf L}}
\def\Sw{\mathsf S}
\newcommand{\iplp}{\ensuremath{IPL^+}}
\begin{document}

\title{Hyper swap structures and Kalman functors: the case study of da Costa logic $C_\omega$}

\author{
 Marcelo E. Coniglio $^\textup{\scriptsize a,b}$
   \and
   Kaique Roberto $^\textup{\scriptsize a,c}$ \and Ana Claudia Golzio
 $^\textup{\scriptsize d}$
}

\date{
   $^\textup{\scriptsize a}$\textit{\small Centro de L\'ogica, Epistemologia e Hist\'oria da Ci\^encia (CLE), Universidade Estadual de Campinas (UNICAMP), Campinas, Brazil}
   \\
   $^\textup{\scriptsize b}$\textit{\small Instituto de Filosofia e Ci\^{e}ncias Humanas (IFCH),  Universidade Estadual de Campinas (UNICAMP), Campinas, Brazil}
   \\
   $^\textup{\scriptsize c}${\small   Faculdade Israelita de Ciências da Saúde Albert Einstein (FICSAE), S\~ao Paulo, Brazil}
    \\
    $^\textup{\scriptsize d}$\textit{\small Faculdade de Ciências e Engenharia (FCE),  Universidade Estadual Paulista (UNESP), Tupã, Brazil}
}

\maketitle

\begin{abstract}
In a previous paper, we recast Morgado hyperlattices and Sette implicative hyperlattices in lattice-theoretic terms. By utilizing swap structures induced by implicative lattices, we obtained a direct proof of soundness and completeness for da Costa’s paraconsistent logic $C_\omega$ with respect to Sette's hyperalgebraic semantics. Inspired by Kalman functors in the context of twist structures, we introduce the notion of hyper swap structures, a novel class of hyperalgebras that naturally generalize swap structure semantics. We prove that these hyperalgebras, besides providing another class of hyperalgebraic models for  $C_\omega$, induce a Kalman-style functor between the category of Sette implicative hyperlattices and the category of enriched hyperalgebras for $C_\omega$. Specifically, we exhibit an equivalence of categories between Sette implicative hyperlattices and their enriched hyperalgebraic counterparts using Kalman and forgetful functors. Similar results are extended to two axiomatic extensions of $C_\omega$.
\end{abstract}

\section{Introduction}\label{sec:introd}

In a seminal paper from 1958,  J.~A. Kalman~\cite{kalman:1958} introduced a simple but highly original algebraic construction. He observed that, given a bounded distributive lattice $\mathcal{L}=\langle L,\land,\lor,0,1\rangle$, the set $K(\mathcal{L})=\{(a,b) \in L^2 \ : \ a \land b =0\}$ forms a centered Kleene algebra with the following operations:
$$(a,b) \,\bar{\land}\, (c,d) = (a \land c, b \vee d)$$
$$(a,b) \,\bar{\lor}\, (c,d) = (a \lor c, b \land d)$$
$$\bar{\neg} (a,b) = (b, a).$$

\noindent The center of $K(\mathcal{L})$ (i.e., the unique element $c$ such that $\bar{\neg} c=c$) is $(0,0)$. In 1986, R. Cignoli~\cite{cign:86} observed that the mapping $K$ can be extended to morphisms  by defining $K(f):K(\mathcal{L}) \to K(\mathcal{L}')$ as $K(f)(a,b)=(f(a),f(b))$, for every lattice homomorphism $f:\mathcal{L} \to \mathcal{L}'$. This gives rise to a functor $K$ (the {\em Kalman functor}), from the category of bounded lattices to the category of centered Kleene algebras, which has a left adjoint. Furthermore, by considering the full subcategory of Kleene centered algebras satisfying an interpolation property (which M. Sagastume proved to be equivalent to an algebraic condition, see~\cite{jansana:sanmart:2018} for details), Cignoli  obtained an equivalence of categories.  In addition, he  observed that, in an independent way,  M. Fidel~\cite{fidel:1978} and D. Vakarelov~\cite{vakarelov:1977} also introduced the Kalman's construction $K(\mathcal{H})$ for any Heyting algebra $\mathcal{H}$, obtaining in this particular case Nelson algebras. Cignoli also showed that the Kalman functor establishes an equivalence between the category of Heyting algebras and the category of centered Nelson algebras, which allows us to study Nelson algebras in terms of twist structures over Heyting algebras. After M.  Kratch~\cite{kracht:1998}, this kind of Kalman constructions is nowadays known as {\em twist structures}, and the matrix semantics associated with a class of twist structures for a given logic is called {\em twist structures semantics}. In such matrices, the set of designated elements is usually defined by $D = \{(a,b) \ : \ a=1\}$. Kalman construction, as well as its associated Kalman functor and the induced twist structures semantics, were afterwards adapted to several algebraic contexts and different logic systems, see for instance~\cite{odin:08, Rivieccio2014, casti:cel:sanmart:2017, jansana:sanmart:2018} and, more recently, \cite{busa:gala:mar:2022} (and the references therein).

Despite the theoretical and practical interest of obtaining an algebraic counterpart to logic systems (the subject studied in the so-called  {\em Abstract Algebraic Logic}, AAL --- for an excellent textbook, see \cite{font:16}), it is well known that several classes of logic systems cannot be characterized in algebraic terms, at least by means of the traditional tools of AAL. For instance, many systems in the class of paraconsistent logics known as {\em Logics of Formal Inconsistency} (in short LFIs, see for instance~\cite{carnielli2016paraconsistent}) lie outside the scope of the usual techniques of AAL. In addition, some of such systems cannot be semantically characterized by a single finite logical matrix. This forces us to search for new kind of semantics, in general non-deterministic:  (non-truth-functional) bivaluations, possible-translations semantics, Fidel structures, and non-deterministic matrices (or Nmatrices), obtaining in several cases decision procedures for these logics. 

Nmatrices generalize logical matrices by replacing the underlying algebra of truth-values by a hyperalgebra, i.e., a structure in which the  connectives are interpreted as hyperoperators that associate to each input a non-empty set of possible values. They were formally introduced in 2001 by A. Avron and I. Lev (see~\cite{Av:01}), although Nmatrices were already considered in the literature several decades before that. For instance, Yu. Ivlev studied several systems of  non-normal modal logics with finite-valued Nmatrix semantics   (a recent survey of Ivlev's contributions to modal logics with Nmatrix semantics can be found in~\cite{Ivlev:2024}).

In~\cite[Chapter~6]{carnielli2016paraconsistent} a systematic way was introduced to define Nmatrices in an analytical way, through the notion of {\em swap structures}.   These hyperalgebras can be seen as non-deterministic twist structures, since  the elements of their domain are $(n+1)$-tuples over a given ordered algebra (in general a Boolean algebra) $\mathcal{A}$ in which its coordinates represent a truth-value (in  $\mathcal{A}$) assigned to $\varphi$, $\alpha_1(\varphi)$, \ldots, $\alpha_{n}(\varphi)$. Each $\alpha_i(p)$ is a formula representing, in general,  a (non truth-functional) connective of the logic being characterized, which does not have an algebraic interpretation in $\mathcal{A}$. In~\cite{coniglio2019swap}, swap structures for Ivlev-like modal logics were proposed for the first time. In~\cite{coniglio2020non}, swap structures were introduced for several LFIs, and a generalization of the Kalman functor was also considered: for each LFI, the functor produces, for a given Boolean algebra $\mathcal{A}$, a swap structure $K(\mathcal{A})$. Although the functor $K$ preserves products and monomorphisms, the existence of a left adjoint for $K$ does not appear to be possible to obtain.

In a recent preprint~\cite{con:gol:rob:2025}, we characterized da Costa paraconsistent logic $C_\omega$ in terms of swap structures defined over implicative lattices, which are the algebraic counterpart of positive intuitionistic logic \iplp. This is justified by the fact that $C_\omega$ extends \iplp\ by adding a paraconsistent negation, and it is not finitely trivializable (that is, $C_\omega$ cannot define a bottom formula $\bot$, and thus it is not an LFI). Our characterization was done in terms of hyperalgebras for  $C_\omega$ expanding  Morgado hyperlattices (introduced in~\cite{morgado1962introduccao}) and Sette implicative hyperlattices (introduced in~\cite{sette1971algebras}), which constitute a very natural transposition of the notions of lattices and implicative lattices to the realm of hyperstructures. The class of swap structures for  $C_\omega$ induced by implicative lattices is contained in  the class  $\mathbb{HC}_\omega$ of hyperalgebras for  $C_\omega$, such that this logic is sound and complete w.r.t. swap structures semantics, as well as w.r.t. Nmatrix semantics over $\mathbb{HC}_\omega$. In this way, we obtain a Kalman-style functor from the category of implicative lattices into the category   $\mathbb{HC}_\omega$ of hyperalgebras for  $C_\omega$. While we have successfully abstracted swap structures to a class of hyperalgebras, finding a left adjoint to this functor  does not seem possible, in principle --- a situation analogous to the LFIs  case treated  in~\cite{coniglio2020non}.

A possible explanation for the lack of existence of an adjoint for the Kalman functor for hyperalgebras (through swap structures), compared to the algebraic case (through twist structures) is that, in the latter construction, the Kalman functor relates categories of {\em algebras}. Hence, the adjoint functor `forgets' the additional operator of the more complex algebras (a negation operator, in most cases) and keeps the underlying algebraic structure. In the swap structure construction, in turn, the Kalman functor associates to an {\em algebra} (an implicative lattice in the case of $C_\omega$, or  a Boolean algebra in the case of LFIs) an {\em hyperalgebra} (a swap structure). Hence, any functor in the opposite direction to the Kalman functor should `forget' the new operator(s)  induced in the swap structure (or, in general, in the class of hyperalgebras to which the swap structures belong, if such abstraction is possible) thereby obtaining an {\em algebra}. But this seems to be inadequate, provided that the reduct of the swap structures (and of the hyperalgebras in the full class of models) without such new operator(s) is also a hyperalgebra, not an algebra: information may be lost along this `forgetful' process. Thus, it seems much more reasonable to define a swap structure  from a hyperalgebra with the same logical behavior as the class of algebras originally considered. That is, the Kalman-style functor associated to swap structures should relate categories of hyperalgebras, just as the Kalman functor associated to twist structures relates categories of algebras. For instance, in the case of $C_\omega$, a swap structure should be defined starting from a  Sette implicative hyperlattice, and in the case of LFIs, the swap structures should be defined over Boolean  hyperalgebras, obtaining in both cases Kalman-style functors relating categories of hyperalgebras.

Thus, the aim of this paper is to generalize, to the hyperalgebraic setting, the familiar twist structure technique discussed above, by proposing the novel notion of \emph{hyper swap structures}.  Our main objective is to establish equivalences of categories between a `base’ category of hyperalgebras (which interprets the `standard' operators such as conjunction, disjunction and deductive implication) and an `induced’ category of hyperalgebras (containing `non-standard' non-truth-functional operators such as paraconsistent negations that do not preserve logical equivalences in general) whose \emph{representative} objects are swap structures, thus mirroring what is achieved for algebras via twist constructions.  To fix ideas, and as a case example, we start by replacing the swap structures for $C_\omega$ defined over standard implicative lattices, considered in our previous paper~\cite{con:gol:rob:2025}, with hyper swap structures defined over Sette implicative hyperlattices. Thus, by passing to the subcategory of \emph{enriched hyper $C_{\omega}$ algebras} (Definition~\ref{def:EHCw}), we prove in Section~\ref{sect:equiv01} that the category of Sette implicative hyperlattices is equivalent to that of enriched hyper $C_{\omega}$ algebras, with hyper swap structures serving as the representative objects.  This level of generality --- remaining entirely within hyperalgebra categories --- is essential to lift twist‐based equivalences into the hyperalgebraic realm.

The organization of this paper is as follows: in Section~\ref{sec:morgado} we recall the basic notions and results on hyperlattices that will be employed throughout the paper, in particular the definitions and key properties of Sette implicative hyperlattices presented in~\cite{con:gol:rob:2025}. In Section~\ref{sec:Cw} we recall the paraconsistent logic $C_{\omega}$, which is a kind of `syntactical limit' of da Costa’s hierarchy of systems $C_{n}$ (for $1 \le n < \omega$). Our treatment of $C_{\omega}$ follows the presentations in~\cite{da1993sistemas,carnielli2016paraconsistent}. In Section~\ref{sec:swap-str} it is introduced the notion of hyper swap structures for $C_{\omega}$: we give their formal definition, develop the corresponding semantics, and prove both soundness and completeness of $C_{\omega}$ with respect to this semantics. In Section~\ref{sect:equiv01} we define the class of Enriched Hyper $C_{\omega}$ Algebras (EHC$_{\omega}$A) and establish the central equivalence of categories between the category of Sette implicative hyperlattices (IHLs) and the category of EHC$_{\omega}$As. In Section~\ref{aect:axiom-ext} the results on $C_\omega$ presented in the previous sections are adapted to the logics $C_{min}$ (introduced in~\cite{car:mar:99}) and $C_\omega^+$, two interesting axiomatic extensions of  $C_\omega$.
Finally, in Section~\ref{sect:final-remarks} we summarize our results and outline possibilities for future research.

\section{Morgado hyperlattices and Sette hyperalgebras}\label{sec:morgado}


In this section we recall the basic notions and results on hyperlattices used throughout this paper, which were taken from our previous paper~\cite{coniglio2020non}. 

\begin{definition} \label{def:signature}
 A {\em propositional signature} is a sequence of pairwise (possibly empty) disjoint sets $\Sigma=(\Sigma_n)_{n\in \mathbb{N}}$. Elements in $\Sigma_n$ are called {\em $n$-ary constructors}, and elements in $\Sigma_0$ are called {\em  symbols for constants}. The  free algebra over $\Sigma$ generated by a denumerable set $\mathcal{P}=\{p_0,p_1,\ldots\}$ of  propositional variables will be denoted by $For(\Sigma)$. When $|\Sigma|:= \bigcup_{n \geq 0} \Sigma_n$ is finite then  $\Sigma$ will be  represented by $|\Sigma|$.
\end{definition}

Propositional signatures will be used for describing propositional languages, algebras and hyperalgebras.

\begin{definition} Let $A$ be any set. A {\em hyperoperation of arity $n \in \mathbb{N}$} over $A$ is a function $\#:A^n \to \mathcal \wp (A)\setminus\{\emptyset\}$. If  $\#(\vec a)$ is  a singleton for every $\vec a \in A^n$, then the hyperoperation $\#$ can be naturally identified with an ordinary $n$-ary operation  $\#:A^n \to A$. A $0$-ary hyperoperation on $A$ can be identified with a non-empty subset of $A$.
\end{definition}

\begin{definition}
 A {\em hyperalgebra} over a signature $\Sigma$ is a set $A$ endowed with a family of $n$-ary hyperoperations $\tilde \# : A^n \to \mathcal \wp(A)\setminus\{\emptyset\}$, for every $\# \in \Sigma_n$ and $n \in \mathbb N$.
\end{definition}

\begin{definition}[Prosets] A pre-ordered set (Proset) is a pair $\mathsf P=\langle P, \preceq\rangle$ such that $P$ is a non-empty set and $\preceq$ is a reflexive and transitive relation on $P$. That is, for every $x, y, z \in P$: $x \preceq x$ and $x \preceq y, y \preceq z$ imply $x \preceq z$.

We say that $x$ and $y$ are {\em similar},  denoted by $x \equiv y$, if $x \preceq y$ and $y \preceq x$. For $B,C \subseteq P$, the expression $B \preceq C$ means that $x \preceq y$ for every $x \in B$ and every $y \in C$. Accordingly, $x \preceq B$ denotes that $x \preceq y$ for every $y \in B$, and $B \preceq x$ denotes that $y \preceq x$ for every $y \in B$.
\end{definition}

\noindent
It is worth noting  that $\emptyset \preceq B$ and $B \preceq \emptyset$ for every $B \subseteq P$. Analogously, $x \preceq\emptyset$ and $\emptyset \preceq x$ for every $x \in P$.

\begin{definition}
Let $\mathsf P$ be a proset, and let $B \subseteq P$.
\begin{enumerate}
    \item The set of {\em minima} of $B$ is $\mathsf{Min}(B)=\{x \in B \ : \ x \preceq B\}$, and the  set of {\em maxima} of $B$ is $\mathsf{Max}(B)=\{x \in B \ : \ B \preceq x\}$.
    \item The set of {\em upper bounds} of $B$ is $\mathsf{Ub}(B)=\{z \in P \ : B \preceq z\}$. The set of lower bounds of $B$ is $\mathsf{Lb}(B)=\{z \in P \ : z \preceq B\}$.
\end{enumerate}
\end{definition}

\noindent
As a consequence of the definitions, $\mathsf{Min}(\emptyset)=\mathsf{Max}(\emptyset)=\emptyset$, and  $\mathsf{Ub}(\emptyset)=\mathsf{Lb}(\emptyset)=P$.

In 1962, J. Morgado introduced an interesting notion of hyperlattices based on prosets:

\begin{definition} [Morgado hyperlattices, {\cite[Ch.~II, \S 2, p.~122]{morgado1962introduccao}}] \label{def:m-hlattices}  Let $\mathsf P$ be a proset, and let $x,y \in P$.
\begin{enumerate}
    \item The {\em Morgado hypersupremum} (or {\em supremoid}) of $x$ and $y$ is the set $x \curlyvee y = \mathsf{Min}(\mathsf{Ub}(\{x,y\}))$.
    \item The {\em Morgado hyperinfimum} (or {\em infimoid}) of $x$ and $y$ is the set $x \curlywedge y = \mathsf{Max}(\mathsf{Lb}(\{x,y\}))$.
    
    \item $\mathsf P$ is said to be a {\em Morgado hyperlattice} (or an {\em m-hyperlattice}, or simply an {\em hyperlattice}) if $x \curlyvee y$ and $x \curlywedge y$ are nonempty sets for every $x,y \in P$.
\end{enumerate}
\end{definition}

\begin{definition} [Stable sets, {\cite[Definition~8]{coniglio2020non}}] \label{stable} Let  $\emptyset\neq A, B \subseteq L$. We say that $A$ and $B$ are {\em similar}, and write $A\equiv B$, if $a \equiv b$ for every $a \in A$ and $b \in B$. That is: $a \preceq b$ and $b \preceq a$  for every $a \in A$ and $b \in B$. A non-empty subset $A\subseteq L$ is {\em stable} if $A\equiv A$, i.e., $x \equiv y$ for every $x,y \in A$.
\end{definition}

Observe that $x \curlywedge y$ and $x \curlyvee y$ are stable, for every $x,y \in L$.

\begin{proposition}\label{stable-wedge-02}
    Let $A,B,C\subseteq L$ be stable sets, and let $\#,\#' \in\{\curlywedge, \curlyvee\}$. Then, $A \# B$ is stable and, for all $a\in A$, $b\in B$ and $c\in C$:
    \begin{enumerate}
        \item $A \# B= a \# b$.

        \item $A\#(B\#' C)=a\#(b\#' c)$ \ and \ $(A\# B)\#' C=(a\#b)\#' c$.

        \item $A\#(B\# C)=a\#(b\# c)=(a\#b)\# c=(A\# B)\# C$.

        \item $A \# B \preceq C$ iff $a \# b \preceq c$ for some $a \in A$, $b \in B$ and $c \in C$.

        \item $C \preceq A \# B$ iff $c \preceq a \# b$ for some $a \in A$, $b \in B$ and $c \in C$.
    \end{enumerate}
\end{proposition}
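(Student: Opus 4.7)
The plan is to reduce everything to item 1, which collapses the hyperoperation on stable sets to a single value $a \# b$. The expression $A \# B$ should be interpreted as the natural union $\bigcup_{a \in A, b \in B} (a \# b)$, which is the only reading that makes the subsequent items meaningful.

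For item 1, the key observation is that if $A$ and $B$ are stable and $a, a' \in A$, $b, b' \in B$, then $a \equiv a'$ and $b \equiv b'$. Transitivity of $\preceq$ immediately gives $\mathsf{Lb}(\{a,b\}) = \mathsf{Lb}(\{a',b'\})$ and $\mathsf{Ub}(\{a,b\}) = \mathsf{Ub}(\{a',b'\})$; taking $\mathsf{Max}$ or $\mathsf{Min}$ of both sides yields $a \# b = a' \# b'$ as sets for either choice of $\# \in \{\curlywedge, \curlyvee\}$, so every term in the union defining $A \# B$ coincides. Stability of $A \# B$ then follows because any $\mathsf{Max}(S)$ or $\mathsf{Min}(S)$ is automatically stable: any two such extrema lie in $S$ and each dominates the other by definition. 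Item 2 is then immediate by two applications of item 1: first reduce $B \#' C$ to $b \#' c$ (stable by the preceding sentence), then apply item 1 again to $A$ and $B \#' C$, and to $\{a\}$ and $b \#' c$.

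For item 3, the outer equalities are instances of item 2; the substantive content is the associativity $a \# (b \# c) = (a \# b) \# c$ for individual elements. I would show both sides coincide with $\mathsf{Max}(\mathsf{Lb}(\{a,b,c\}))$ when $\# = \curlywedge$, and dually with $\mathsf{Min}(\mathsf{Ub}(\{a,b,c\}))$ for $\# = \curlyvee$. Fix any $d \in b \curlywedge c$. Since $d \in \mathsf{Lb}(\{b,c\})$, any $z \preceq d$ satisfies $z \preceq b$ and $z \preceq c$; conversely, any $z \in \mathsf{Lb}(\{b,c\})$ is dominated by $d$ by the very definition of $d$ as a maximum of that set. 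This yields $\mathsf{Lb}(\{a,d\}) = \mathsf{Lb}(\{a,b,c\})$ and hence $a \curlywedge d = \mathsf{Max}(\mathsf{Lb}(\{a,b,c\}))$; item 1 identifies this with $a \curlywedge (b \curlywedge c)$. A symmetric computation handles $(a \curlywedge b) \curlywedge c$, and duality handles $\curlyvee$.

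Items 4 and 5 are essentially bookkeeping. Using item 1 and the stability of $C$, the forward direction of item 4 is immediate by restricting the set-level inequality $A\# B \preceq C$ to a picked triple. For the converse, $a \# b \preceq c$ together with $a' \# b' = a \# b$ (item 1) and $c \equiv c'$ (stability of $C$) extends the inequality to every triple by transitivity of $\preceq$. Item 5 is dual. The main obstacle is the associativity step of item 3: the non-uniqueness of maxima requires explicit use of the definitional property that every element of $\mathsf{Max}(\mathsf{Lb}(\{b,c\}))$ dominates \emph{all} of $\mathsf{Lb}(\{b,c\})$---exactly what is needed to identify $\mathsf{Lb}(\{a,d\})$ with $\mathsf{Lb}(\{a,b,c\})$.
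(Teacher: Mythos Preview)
Your proposal is correct. The paper does not supply a proof of this proposition at all: Section~\ref{sec:morgado} is explicitly a recap of results from the authors' earlier paper~\cite{coniglio2020non}, and Proposition~\ref{stable-wedge-02} is simply stated without argument. Your treatment of item~1 via equality of the sets $\mathsf{Lb}(\{a,b\})$ and $\mathsf{Ub}(\{a,b\})$ under $\equiv$-substitution, and of item~3 via the identification $\mathsf{Lb}(\{a,d\}) = \mathsf{Lb}(\{a,b,c\})$ for $d \in b \curlywedge c$, is exactly the natural proof one would expect; items~2, 4, 5 are, as you say, bookkeeping consequences. There is nothing to compare against and nothing missing in your argument.
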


\begin{definition}  Let $\mathsf P=\langle P, \preceq, \curlywedge, \curlyvee\rangle$ be a hyperlattice. The sets  $\mathsf{Min}(P)$ and $\mathsf{Max}(P)$ of minima and maxima elements of $P$ will be denoted by $\bot$ and $\top$, respectively.
\end{definition}

In his Master's dissertation from 1971, A.M. Sette introduced an interesting notion of implicative hyperlattices, based on Morgado hyperlattices:

\begin{definition} [Sette implicative hyperlattices, {\cite[Definition~2.3]{sette1971algebras}}] \label{def:s-ilattices} A {\em Sette implicative hyperlattice} (or an {\em IHL}) is a hyperalgebra $\mathsf L=\langle L,\curlywedge,\curlyvee,\multimap  \rangle$ such that the reduct $\langle L,\curlywedge,\curlyvee\rangle$  is a hyperlattice and the hyperoperator $\multimap$  satisfies the following properties, for every $x,y,z, z' \in L$:
\begin{description}
    \item[(I1)] $z \in x \multimap y$ implies that $x \curlywedge z \preceq y$;

    \item[(I2)] $x \curlywedge z \preceq y$ implies that   $z \preceq x \multimap y$;

    \item[(I3)] $z \equiv z'$ and $z \in x \multimap y$ implies that $z' \in x \multimap y$.
\end{description}
\end{definition}

\noindent
It is possible to give a useful characterization of IHLs.

\begin{definition}  Let $\mathsf P$ be a hyperlattice, and let $x,y \in P$. The set $\mathsf R(x,y)$ is given by $\mathsf R(x,y)=\{z \in P \ : \ x \curlywedge z \preceq y\}$.
\end{definition}

\begin{proposition} [{\cite[Proposition~10]{coniglio2020non}}] \label{prop:char:IHL} Let $\mathsf L=\langle L,\curlywedge,\curlyvee,\multimap  \rangle$ be a  hyperalgebra such that $\langle L,\curlywedge,\curlyvee\rangle$  is a hyperlattice. Then, $\mathsf L$ is an  IHL iff $x \multimap y = \mathsf{Max}(\mathsf R(x,y))$, for every $x,y \in L$.
\end{proposition}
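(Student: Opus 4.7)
The plan is to prove both inclusions by unpacking the definition of $\mathsf{Max}(\mathsf R(x,y))$ and exploiting the three axioms \textbf{(I1)}--\textbf{(I3)} one at a time, together with the useful observation that $\mathsf R(x,y)$ is closed under the similarity relation $\equiv$.

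For the forward direction, assume $\mathsf L$ is an IHL. To show $x \multimap y \subseteq \mathsf{Max}(\mathsf R(x,y))$, fix $z \in x \multimap y$; by \textbf{(I1)} we get $x \curlywedge z \preceq y$, so $z \in \mathsf R(x,y)$. To see that $z$ is a maximum, pick any $w \in \mathsf R(x,y)$, so $x \curlywedge w \preceq y$; by \textbf{(I2)} we have $w \preceq x \multimap y$, and in particular $w \preceq z$. For the reverse inclusion, fix $z \in \mathsf{Max}(\mathsf R(x,y))$; then $x \curlywedge z \preceq y$, hence by \textbf{(I2)}, $z \preceq x \multimap y$. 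Since hyperoperations are non-empty by definition, pick any $v \in x \multimap y$: by \textbf{(I1)}, $v \in \mathsf R(x,y)$, so $v \preceq z$ (because $z$ is a maximum), while $z \preceq v$ (from the step just before). Hence $z \equiv v$, and then \textbf{(I3)} applied to $v \in x \multimap y$ yields $z \in x \multimap y$.

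For the converse, assume $x \multimap y = \mathsf{Max}(\mathsf R(x,y))$ for all $x,y$. Axiom \textbf{(I1)} is immediate from the very definition of $\mathsf R(x,y)$. For \textbf{(I2)}, if $x \curlywedge z \preceq y$ then $z \in \mathsf R(x,y)$, and every element of $\mathsf{Max}(\mathsf R(x,y)) = x \multimap y$ dominates $z$, giving $z \preceq x \multimap y$. For \textbf{(I3)}, the key lemma is that $\mathsf R(x,y)$ is closed under $\equiv$: if $z \equiv z'$ then the sets of lower bounds $\mathsf{Lb}(\{x,z\})$ and $\mathsf{Lb}(\{x,z'\})$ coincide (since $w \preceq z \Leftrightarrow w \preceq z'$), hence $x \curlywedge z = \mathsf{Max}(\mathsf{Lb}(\{x,z\})) = x \curlywedge z'$, and therefore $x \curlywedge z \preceq y$ iff $x \curlywedge z' \preceq y$. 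Given this, if $z \in \mathsf{Max}(\mathsf R(x,y))$ and $z \equiv z'$, then $z' \in \mathsf R(x,y)$ and any $w \in \mathsf R(x,y)$ satisfies $w \preceq z \preceq z'$, whence $z' \in \mathsf{Max}(\mathsf R(x,y)) = x \multimap y$.

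The main obstacle I anticipate is \textbf{(I3)} in the backward direction, since it is the only axiom that does not follow directly from the set-theoretic definitions of $\mathsf R(x,y)$ and $\mathsf{Max}$. Its proof hinges on the small but crucial remark that the hyperinfimum $\curlywedge$ respects the similarity relation coordinate-wise; once this is observed, closure of $\mathsf R(x,y)$ and hence of $\mathsf{Max}(\mathsf R(x,y))$ under $\equiv$ is automatic. Everything else reduces to bookkeeping with the definitions and to the non-emptiness of hyperoperations, which is needed to produce a witness $v \in x \multimap y$ in the forward direction.
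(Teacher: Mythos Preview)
Your proof is correct. Note, however, that the paper does not actually prove this proposition: it is quoted verbatim from~\cite[Proposition~10]{coniglio2020non} and stated without proof, so there is no in-paper argument to compare against. Your approach---checking the two inclusions via \textbf{(I1)}--\textbf{(I3)} for the forward direction, and deriving each axiom from the explicit description $x \multimap y = \mathsf{Max}(\mathsf R(x,y))$ for the converse, with the closure of $\mathsf R(x,y)$ under $\equiv$ handling \textbf{(I3)}---is the natural one and matches what one would expect in the cited source.
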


It follows that $x \multimap y$ is stable, for every $x,y \in L$. Moreover, the following holds (see~\cite[Section~3]{coniglio2020non}):

\begin{proposition} \label{prop-SIHL}
Let $\mathsf L$ be an IHL, and let $x,y,z \in L$. Then:
\begin{enumerate}
    \item If $A,B\subseteq L$ are stable then $A\multimap B$ is stable. Hence  $A\multimap B=a\multimap b$  for all $a\in A$ and all $b\in B$.
    
    \item $A  \preceq B$ iff   $a \multimap b = \top$ for every  $a \in A$ and $ b \in B$, iff $A \multimap B = \top$. In particular, if $A,B$ are stable then $A\preceq B$ iff $a \multimap b = \top$ for some $a\in A$ and some $b\in B$ iff $a \preceq b$ for some $a\in A$ and some $b\in B$.
    
\end{enumerate}
\end{proposition}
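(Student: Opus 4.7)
The plan is to reduce both parts to pointwise statements on singletons, exploiting the characterization $x\multimap y = \mathsf{Max}(\mathsf R(x,y))$ from Proposition~\ref{prop:char:IHL} together with the set-level collapse provided by Proposition~\ref{stable-wedge-02}(1). Throughout I read $A\multimap B$ as the natural hyperoperation extension $\bigcup\{a\multimap b : a\in A,\,b\in B\}$.

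For Part~1, each $a\multimap b=\mathsf{Max}(\mathsf R(a,b))$ is automatically stable, since two maxima of one set are mutually $\preceq$-related. The crux is to show $a\multimap b=a'\multimap b'$ (as sets) for all $a,a'\in A$ and $b,b'\in B$. For this I would prove the stronger identity $\mathsf R(a,b)=\mathsf R(a',b')$: since $\{a,a'\}$ is stable (from $a\equiv a'$), Proposition~\ref{stable-wedge-02}(1) applied to $\{a,a'\}\curlywedge\{z\}$ yields $a\curlywedge z=a'\curlywedge z$ as sets, and from $b\equiv b'$ together with transitivity one derives $a\curlywedge z\preceq b \Longleftrightarrow a'\curlywedge z\preceq b'$. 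Taking $\mathsf{Max}$ gives $a\multimap b=a'\multimap b'$, whence $A\multimap B$ collapses to a single stable set.

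For Part~2, the definition of $A\preceq B$ reduces everything to the pointwise biconditional $a\preceq b \Longleftrightarrow a\multimap b=\top$. Forward: if $a\preceq b$, every element of $a\curlywedge z$ lies below $a$ (since $a\curlywedge z\subseteq\mathsf{Lb}(\{a,z\})$) and hence below $b$, so $\mathsf R(a,b)=L$ and $a\multimap b=\mathsf{Max}(L)=\top$. Converse: applying the forward direction to $a\preceq a$ shows $\top=a\multimap a\neq\emptyset$, and for $t\in\top$ with $t\in a\multimap b$ the relations $a\preceq t$ and $a\curlywedge t\preceq b$ give $a\in a\curlywedge t$, hence $a\preceq b$. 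This same `$a\in a\curlywedge t$' trick handles the third iff: one direction is $A\multimap B=\bigcup\top=\top$, and the other extracts a witness $t\in(a\multimap b)\cap\top$ from $a\multimap b\subseteq\top$ (non-empty by the definition of a hyperoperation) to conclude $a\preceq b$ and hence $a\multimap b=\top$ pointwise. The `In particular' clause then collapses `some' and `all' quantifiers via stability.

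The main obstacle is bookkeeping rather than depth: one must keep set equalities, pointwise $\preceq$, and similarity $\equiv$ carefully distinguished. Part~1 crucially needs Proposition~\ref{stable-wedge-02}(1) to deliver a literal set equality $a\curlywedge z=a'\curlywedge z$, not merely similarity — otherwise the argument yields only $a\multimap b\equiv a'\multimap b'$, and one would have to invoke axiom (I3) to upgrade to set equality via elementwise transfer, a route that works but is heavier.
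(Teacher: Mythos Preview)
The paper does not supply its own proof of this proposition; it is stated with a pointer to \cite[Section~3]{coniglio2020non}, so there is no in-paper argument to compare against. Your proof is correct and is the natural one: Part~1 reduces to $\mathsf R(a,b)=\mathsf R(a',b')$ via the set-level collapse of Proposition~\ref{stable-wedge-02}(1), and Part~2 hinges on the pointwise equivalence $a\preceq b \Longleftrightarrow a\multimap b=\top$, with the key observation that $a\preceq t$ forces $a\in a\curlywedge t$. Your closing remark about the fallback through axiom~(I3) is also well taken.

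One minor caveat worth flagging: the first chain of equivalences in Part~2 is stated in the paper for arbitrary $A,B\subseteq L$, but your reading of $A\multimap B$ as $\bigcup_{a,b} a\multimap b$ makes the third equivalence fail when $A$ or $B$ is empty (then $A\preceq B$ holds vacuously while $A\multimap B=\emptyset\neq\top$). This is almost certainly an implicit nonemptiness assumption in the original statement rather than a flaw in your argument, and the ``In particular'' clause is unaffected since stable sets are nonempty by definition.
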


\section{The logic $C_\omega$}\label{sec:Cw}

In his groundbreaking Habilitation thesis~\cite{da1993sistemas}, Newton da Costa introduced the hierarchy of paraconsistent logics $C_n$ (for $1 \leq n \leq \omega$). The system $C_\omega$ is a kind of `syntactic limit' of the hierarchy, as it contains exactly all the axioms belonging simultaneously to all the calculi $C_n$, for $1 \leq n < \omega$. However, $C_\omega$ is not the deductive limit of these calculi, see~\cite{car:mar:99}. Among other features, $C_\omega$ is not finitely trivializable (that is, it cannot define a bottom formula) and, different from the other calculi of the hierarchy, it does not validate the Peirce/Dummett law $\varphi \vee (\varphi \to \psi)$. Hence, it is an expansion ---   by adding a paraconsistent negation --- of  positive intuitionistic logic, instead of expanding positive classical logic (and, {\em a posteriori}, classical logic), as $C_n$ does  for $1 \leq n < \omega$.

\

\begin{definition} [Hilbert calculus for  $C_\omega$] \label{syCw} The Hilbert calculus for  $C_\omega$ is defined over the signature $\Sigma_\omega = \{\land,\vee,\to,\neg\}$  as follows:
\\[2mm]
{\bf Axiom schemas:}
\begin{multicols}{2}
    \begin{description}
    \item[(AX1)] $\alpha  \to (\beta  \to \alpha )$
    \item[(AX2)] $(\alpha  \to (\beta  \to \gamma )) \to ((\alpha  \to \beta ) \to (\alpha  \to \gamma ))$
    \item[(AX3)] $\alpha  \to (\beta  \to (\alpha  \land \beta ))$
    \item[(AX4)] $(\alpha  \land \beta ) \to \alpha$
    \item[(AX5)] $(\alpha  \land \beta ) \to \beta$
    \item[(AX6)] $\alpha  \to (\alpha  \lor \beta )$
    \item[(AX7)] $\beta  \to (\alpha  \lor \beta )$
    \item[(AX8)] $(\alpha  \to \gamma ) \to ((\beta  \to \gamma ) \to	((\alpha  \lor \beta ) \to \gamma ))$
    \item[(EM)] $\alpha  \lor \neg \alpha$
    \item[(cf)] $\neg\neg \alpha  \to  \alpha$
\end{description}
\end{multicols}

\textbf{Inference rule:}
\begin{description}
    \item[(MP)] $\inferrule{\alpha\quad\alpha\rightarrow\beta}{\beta}$ 
\end{description}
\end{definition}

It is worth noting that (AX1)-(AX8) plus (MP) constitute the standard Hilbert calculus for positive intuitionistic logic, which is semantically characterized by the class of implicative lattices.

In~\cite[Chapter~2]{sette1971algebras}, Sette introduced a class of hyperalgebras for $C_\omega$, which are in correspondence with da Costa algebras for $C_\omega$ proposed in~\cite{daC:set:69}. Thus, they  constitute a suitable semantics for $C_\omega$. A slightly more general definition was considered in~\cite{con:gol:rob:2025}. Observe that $\top\neq\emptyset$ in any IHL: indeed, for every $x \in L$, $x \multimap x$ is a non-empty set contained in $\top$.

\begin{definition} [Sette hyperalgebras  for $C_\omega$, {\cite[Definition~16]{con:gol:rob:2025}}] \label{def:SHCw}
A {\em Sette hyperalgebra for  $C_\omega$} (or {\em hyper  $C_\omega$ algebra}, or simply a HC$_\omega$A) is a hyperalgebra $\mathsf H=\langle H,\curlywedge,\curlyvee,\multimap, \div  \rangle$ over $\Sigma_\omega$ such that the reduct~\mbox{$\langle H,\curlywedge,\curlyvee,\multimap\rangle$}  is an IHL and the hyperoperator $\div$  satisfies the following properties, for every $x, y, w \in H$:

\begin{description}
    \item[(H1)] $y \in \div x$ and $w \in x \curlyvee y$ implies that  $w \in \top$;
    \item[(H2)] $y \in \div x$ and $w\in \div y$ implies that  $w \preceq x$.
\end{description}
\end{definition}

\noindent It is immediate to see that conditions (H1) and (H2) can be written in a concise way as follows:

\begin{description}
    \item[(H1')] $x \curlyvee \div x \equiv \top$;
    \item[(H2')] $\div \div x \preceq x$,
\end{description}

\noindent
for every $x \in H$. 

\begin{definition} [HC$_\omega$A semantics] \label{def-sem-SHCw} Let $\mathsf H$ be a HC$_\omega$A, and let $\Gamma \cup \{\varphi\}$ be a set of formulas over $\Sigma_\omega$.
\begin{enumerate}
    \item The Nmatrix associated to $\mathsf H$ is $\mathcal M_{\mathsf H}=\langle \mathsf H, \top\rangle$.

    \item We say that $\varphi$ is a semantical consequence of $\Gamma$ w.r.t. $\mathsf H$ if $\Gamma \models_{\mathcal M_{\mathsf H}} \varphi$.

    \item Let $\mathbb{HC}_\omega$ be the class of HC$_\omega$As. Then, $\varphi$ is a semantical consequence of $\Gamma$ w.r.t.  HC$_\omega$As, denoted by   $\Gamma \models_{\mathbb{HC}_\omega} \varphi$, if $\Gamma \models_{\mathcal M_{\mathsf H}} \varphi$ for every $\mathsf H \in \mathbb{HC}_\omega$.
\end{enumerate}
\end{definition}

\section{Hyper Swap structures for $C_\omega$}\label{sec:swap-str}

In our previous paper~\cite{con:gol:rob:2025} we proved that $C_\omega$ can be semantically characterized by means of swap structures, which are Sette hyperalgebras for $C_\omega$ defined over pairs of elements of a given implicative lattice  $\mathsf L$. Given  an element $z \in L \times L$, the first and second components of $z$ will be denoted, respectively, by $z_1$ and $z_2$. That is, $z=(z_1,z_2)$.  As usual in the context of swap structures, these pairs are called {\em snapshots}. The intuitive meaning of the snapshots is that each coordinate represents, respectively, the values associated with formulas $\varphi$ and $\neg\varphi$ in the underlying implicative lattices. For convenience, we briefly recall it below::

\begin{definition} [Swap structures for $C_\omega$] \label{def-swap-Cw} Let $\mathsf L=\langle L, \wedge,\vee,\to\rangle$ be an implicative lattice. Let $S_{\mathsf L}=\{z \in L \times L \ : \ z_1 \vee z_2=1 \}$. The swap structure for $C_\omega$ over $\mathsf L$ is the hyperalgebra $\Sw_0(\mathsf L)=\langle S_{\mathsf L}, \breve{\land},\breve{\vee},\breve{\to},\breve{\neg}\rangle$ over the signature $\Sigma_\omega$ such that the hyperoperators are defined as follows:\\

$\begin{array}{lll}
z\breve{\land} w = \{u\in S_{\mathsf L} \ : \ u_1=z_1\wedge w_1 \} & \hspace{1cm} & z\breve{\to} w = \{u\in S_{\mathsf L} \ : \ u_1=z_1\to w_1 \}\\[1mm]
z\breve{\vee} w = \{u\in S_{\mathsf L} \ : \ u_1=z_1\vee w_1 \} & \hspace{1cm} & \breve{\neg} z= \{u\in S_{\mathsf L} \ : \ u_1=z_2 \mbox{ and }  u_2 \leq z_1 \} \\[1mm]
\end{array}$
\end{definition}

\noindent The Nmatrix associated to  $\Sw_0(\mathsf L)$ is $\mathcal{M}_0(\mathsf L)=\langle \Sw_0(\mathsf L),D_{\mathsf L} \rangle$ where the set of designated truth-values is $D_{\mathsf L}=\{z \in S_{\mathsf L} \ : \ z_1=1\}$. Let $\models_{C_\omega}^{SW}$ be the consequence relation generated by the class of Nmatrices of the form $\mathcal{M}_0(\mathsf L)$. Then, the following result holds (see~\cite[Theorem~1]{con:gol:rob:2025}):

\begin{theorem} [Soundness and completeness of  $C_\omega$ w.r.t. hyperstructures semantics, version~1] \label{Sound-comple0} \ \\
Let $\Gamma \cup \{\varphi\}$ be a set of formulas over $\Sigma_\omega$. The following assertions are equivalent:
\begin{enumerate}
    \item $\Gamma \vdash_{C_\omega} \varphi$;
    \item $\Gamma \models_{\mathbb{HC}_\omega} \varphi$;
    \item $\Gamma \models_{C_\omega}^{SW} \varphi$.
\end{enumerate}
\end{theorem}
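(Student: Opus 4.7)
The strategy is the cyclic chain $(1) \Rightarrow (2) \Rightarrow (3) \Rightarrow (1)$. The first two implications are essentially direct verifications on structures already in hand, while the last requires a canonical construction of a swap structure from a Lindenbaum-style theory.

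For soundness $(1) \Rightarrow (2)$, I would fix any HC$_\omega$A $\mathsf H$ and any valuation $v$ into $\mathcal M_{\mathsf H}$, and verify that every axiom evaluates into $\top$ and that $\MP$ preserves designation. Axioms (AX1)--(AX8) constitute a standard axiom set for positive intuitionistic logic, and their validity reduces to the IHL laws of the reduct $\langle H,\curlywedge,\curlyvee,\multimap\rangle$, essentially via Proposition~\ref{prop-SIHL}(2) together with the characterization of $\multimap$ given in Proposition~\ref{prop:char:IHL}. Axiom (EM) is an immediate consequence of (H1'), since $v(\alpha \vee \neg\alpha) \in v(\alpha) \curlyvee \div v(\alpha) \equiv \top$, and (cf) of (H2') combined with Proposition~\ref{prop-SIHL}(2). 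Preservation by $\MP$ follows routinely from (I1) and the definition of $\top$.

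For $(2) \Rightarrow (3)$, I would show that every swap structure $\Sw_0(\mathsf L)$ is itself a HC$_\omega$A and that its designated set $D_{\mathsf L}$ coincides (up to similarity) with $\top$. Concretely: equip $S_{\mathsf L}$ with the pre-order $z \preceq w$ iff $z_1 \leq w_1$ in $\mathsf L$, verify the IHL clauses for $\langle S_{\mathsf L}, \breve{\land}, \breve{\vee}, \breve{\to}\rangle$ using Proposition~\ref{prop:char:IHL}, and then check (H1)--(H2) for $\breve{\neg}$. The defining condition $u_1 \vee u_2 = 1$ of $S_{\mathsf L}$ yields (H1), and the clause $u_2 \leq z_1$ in the definition of $\breve{\neg}\, z$ yields (H2). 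Since $z_1 = 1$ precisely characterizes the maxima of $S_{\mathsf L}$, $D_{\mathsf L}$ agrees with $\top$, so the two consequence relations coincide on swap structures.

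Completeness $(3) \Rightarrow (1)$ is where the main effort lies. Assuming $\Gamma \not\vdash_{C_\omega} \varphi$, I would invoke Lindenbaum's lemma for $C_\omega$ to obtain a prime, deductively closed theory $\Delta \supseteq \Gamma$ with $\varphi \notin \Delta$. Define $\alpha \sim \beta$ iff $\alpha \to \beta$ and $\beta \to \alpha$ both belong to $\Delta$; since the positive fragment of $C_\omega$ is exactly positive intuitionistic logic, the quotient $\mathsf L_\Delta = For(\Sigma_\omega)/{\sim}$ carries the structure of an implicative lattice with $[\alpha] \leq [\beta]$ iff $\alpha \to \beta \in \Delta$ and top $1 = [\alpha \to \alpha]$. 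Define $v(\psi) = ([\psi], [\neg\psi])$; membership in $S_{\mathsf L_\Delta}$, namely $[\psi] \vee [\neg\psi] = 1$, is exactly (EM). Checking that $v$ is a valuation of $\mathcal M_0(\mathsf L_\Delta)$ reduces, for the positive connectives, to the identity $[\psi \# \chi] = [\psi] \# [\chi]$ in the quotient, and for negation to the inequality $[\neg\neg\psi] \leq [\psi]$, which is precisely (cf). Finally $v(\psi) \in D_{\mathsf L_\Delta}$ iff $[\psi] = 1$ iff $\psi \in \Delta$, so $v$ designates all of $\Gamma$ but not $\varphi$. The delicate point of the whole argument is ensuring that this quotient by positive intuitionistic equivalence interacts correctly with the non-truth-functional negation, so that the second coordinate $[\neg\psi]$ of $v(\psi)$ satisfies every constraint of $\breve{\neg}$; this hinges on (EM) and (cf) alone, confirming that $C_\omega$ carries exactly the right amount of paraconsistent strength to drive the swap construction.
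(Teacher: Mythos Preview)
Your proposal is correct and follows the standard cyclic strategy. Note that the present paper does not actually supply a proof of this theorem in the text: it is simply cited from~\cite{con:gol:rob:2025}. However, the paper \emph{does} prove the hyper-swap analogue (Theorem~\ref{Sound-comple}), and your argument mirrors that proof closely, with one natural adaptation worth making explicit.

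For $(3)\Rightarrow(1)$ the paper, in the hyper case, works directly with the \emph{preorder} $\alpha\preceq_\Delta\beta$ iff $\Delta\vdash_{C_\omega}\alpha\to\beta$ on $For(\Sigma_\omega)$, obtaining an IHL without passing to a quotient, and then builds the hyper swap structure over that IHL. You instead take the \emph{quotient} $For(\Sigma_\omega)/{\sim}$ to get an honest implicative lattice, which is exactly what is needed here since ordinary swap structures live over implicative lattices, not IHLs. This is the right move; the congruence property for $\land,\vee,\to$ is available because the positive fragment of $C_\omega$ is $IPL^+$. One small terminological point: you write ``prime, deductively closed theory'', whereas the paper (and the Lindenbaum--\L o\'s argument it invokes) uses $\varphi$-saturated sets. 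In $C_\omega$ the two notions happen to coincide for this purpose (saturated sets are closed, and primality is not actually used in your construction), so this is harmless.

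Your treatment of $(2)\Rightarrow(3)$ is also on target and matches the content of Proposition~\ref{swap-SHCw} in the deterministic case: the preorder $z\preceq w$ iff $z_1\le w_1$ makes $\Sw_0(\mathsf L)$ an IHL with $\breve\land,\breve\vee,\breve\to$ as infimoid, supremoid and Sette implication, (H1) follows from $z_1\vee z_2=1$, (H2) from the clause $u_2\le z_1$, and $D_{\mathsf L}=\top$ since both reduce to $z_1=1$.
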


\begin{remark} \label{remark:hyperswap}
Swap structures generalize, to the hyperalgebraic setting, the well-known {\em twist structures} technique mentioned in the Introduction. Instead of looking, by means of a Kalman functor,  for an equivalence  between the `base' category of implicative lattices and a subcategory of the induced category of Sette hyperalgebras for $C_\omega$ (with swap structures as `representative' objects), in this section we will generalize the standard construction of  swap structures (over a certain class of algebras) to the novel notion of {\em hyper swap structures}. They are swap structures defined over hyperalgebras instead of algebras. In this specific case, we will consider Sette  implicative hyperlattices instead of standard implicative lattices. The reasons for considering hyper swap structures will be clear in Section~\ref{sect:equiv01}. Indeed, by using a subcategory of  {\em enriched hyper C$_\omega$ algebras} (see Definition~\ref{def:EHCw}), we will obtain an equivalence between the category of Sette  implicative hyperlattices and enriched hyper C$_\omega$ algebras, where the hyper swap structures for $C_\omega$ will play the role of `representative' objects. This  takes the equivalences that can be established between categories of different classes of algebras by means of twist structures into the hyperalgebraic  context.    
\end{remark}

From now on, given an IHL $\mathsf L$ and an element $z \in L \times L$, the first and second components of $z$ will be denoted, as in the case of swap structures, by $z_1$ and $z_2$, respectively. These pairs will also be referred to as {\em snapshots}, and they represent, in the hyper swap structures to be defined below, the values (in this case, in a given IHL)  associated to formulas $\varphi$ and $\neg\varphi$. By generalizing the swap structures for $C_\omega$, moving from implicative lattices to implicative hyperlattices, we arrive at the following notion:

\begin{definition} [Hyper Swap structures for $C_\omega$] \label{def-hswap-Cw} Let $\mathsf L=\langle L, \curlywedge,\curlyvee,\multimap\rangle$ be an IHL. Let 
$$S^{C_\omega}_{\mathsf L}=\{z \in L \times L \ : \ z_1 \curlyvee z_2\equiv\top \}.$$
The hyper swap structure for $C_\omega$ over $\mathsf L$ is the hyperalgebra $\mathsf S(\mathsf L)=\langle S^{C_\omega}_{\mathsf L}, \curlywedge,\curlyvee,\multimap,\div\rangle$ over the signature $\Sigma_\omega$ such that the hyperoperators are defined as follows:\footnote{By simplicity, the hyperoperators and the induced preorder in $\mathsf S(\mathsf L)$ will be denoted by using the same symbols as in $\mathsf L$. The context will avoid any confusion.}
\begin{align*}
    z\curlywedge w&:=\{u\in S^{C_\omega}_{\mathsf L} \ : \ u_1\in z_1\curlywedge w_1 \}\\
    z\curlyvee w&:=\{u\in S^{C_\omega}_{\mathsf L} \ : \ u_1\in z_1\curlyvee w_1 \}\\
    z\multimap w&:=\{u\in S^{C_\omega}_{\mathsf L} \ : \ u_1\in z_1\multimap w_1 \}\\
    \div z&:=\{u\in S^{C_\omega}_{\mathsf L} \ : \ u_1=z_2 \mbox{ and }  u_2 \preceq z_1 \}
\end{align*}
\end{definition}

\noindent Following the usual definitions for swap structures, each hyper swap structure can be naturally associated with an Nmatrix:

\begin{definition} \label{def-Nmatrix-Cw}
 Let $\mathsf L$ be an IHL. The Nmatrix associated to $\mathsf S(\mathsf L)$ is $\mathcal{M}(\mathsf L)=\langle \mathsf S(\mathsf L),D^{C_\omega}_{\mathsf L} \rangle$ where the set of designated truth-values is $D^{C_\omega}_{\mathsf L}=\{z \in S^{C_\omega}_{\mathsf L} \ : \ z_1\in\top\}$.
\end{definition}

\begin{proposition} \label{swap-SHCw}
Let $\mathsf L$ be  an IHL, and let $\mathsf S(\mathsf L)$ be the hyper swap structure for $C_\omega$ over $\mathsf L$. Then:
\begin{enumerate}
    \item The relation $z \preceq w$  in  $\mathsf S(\mathsf L)$  iff  $z_1 \preceq w_1$  in $\mathsf L$
    defines a  preorder such that $\mathsf S(\mathsf L)$ is an hyperlattice where, for every $z,w \in  S^{C_\omega}_{\mathsf L}$, $z\curlywedge w$ and $z\curlyvee w$ are the infimoid and the supremoid of $z$ and $w$, respectively.
    Moreover, $z \equiv w$ in  $\mathsf S(\mathsf L)$ iff $z_1 \equiv w_1$ in $\mathsf L$. 
    \item $\mathsf S(\mathsf L)$ is a HC$_\omega$A.
    Moreover, $D^{C_\omega}_{\mathsf L}=\top$.
    \item $\mathcal{M}(\mathsf L)= \mathcal M_{\mathsf S(\mathsf L)}$.
\end{enumerate}
\end{proposition}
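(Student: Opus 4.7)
My plan hinges on a single structural observation: for every $x\in L$ and every $y\in\top_{\mathsf L}$, the snapshot $(x,y)$ belongs to $S^{C_\omega}_{\mathsf L}$. Indeed, $x\curlyvee y=\mathsf{Min}(\mathsf{Ub}(\{x,y\}))$ is nonempty; any upper bound $u$ of $\{x,y\}$ satisfies $y\preceq u$, and since $y\in\top_{\mathsf L}$ also $u\preceq y$, so $u\equiv y\in\top_{\mathsf L}$. Hence $x\curlyvee y\equiv\top$ and $(x,y)\in S^{C_\omega}_{\mathsf L}$. In other words, the first-coordinate projection $\pi_1:S^{C_\omega}_{\mathsf L}\to L$ is surjective. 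I will use this fact freely below.

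For part (1), reflexivity and transitivity of $\preceq$ on $\mathsf S(\mathsf L)$, and the characterization $z\equiv w\ \text{iff}\ z_1\equiv w_1$, are inherited directly from $\mathsf L$. To identify $z\curlywedge w$ with the Morgado infimoid of $\{z,w\}$, I would unfold $\mathsf{Lb}(\{z,w\})=\{u\in S^{C_\omega}_{\mathsf L} \,:\, u_1\in\mathsf{Lb}(\{z_1,w_1\})\}$ and, using surjectivity of $\pi_1$, observe that an element $u$ dominates every lower bound of $\{z,w\}$ in $\mathsf S(\mathsf L)$ precisely when $u_1$ dominates every lower bound of $\{z_1,w_1\}$ in $\mathsf L$, i.e., $u_1\in\mathsf{Max}(\mathsf{Lb}(\{z_1,w_1\}))=z_1\curlywedge w_1$. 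This is exactly the defining condition of $z\curlywedge w$ in Definition~\ref{def-hswap-Cw}. The supremoid case is symmetric. Non-emptiness of both sets follows by pairing any element of $z_1\curlywedge w_1$ (respectively $z_1\curlyvee w_1$) with any element of $\top_{\mathsf L}$ to produce a snapshot.

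For part (2), I first show that the reduct $\langle S^{C_\omega}_{\mathsf L},\curlywedge,\curlyvee,\multimap\rangle$ is an IHL via the characterization $x\multimap y=\mathsf{Max}(\mathsf R(x,y))$ of Proposition~\ref{prop:char:IHL}. By Proposition~\ref{stable-wedge-02} and stability, the condition $z\curlywedge u\preceq w$ in $\mathsf S(\mathsf L)$ reduces to $z_1\curlywedge u_1\preceq w_1$ in $\mathsf L$, so $u\in\mathsf R(z,w)$ iff $u_1\in\mathsf R(z_1,w_1)$; surjectivity of $\pi_1$ again yields $\mathsf{Max}(\mathsf R(z,w))=\{u\in S^{C_\omega}_{\mathsf L} \,:\, u_1\in z_1\multimap w_1\}=z\multimap w$. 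The axioms (H1) and (H2) for $\div$ are then direct coordinate computations: for (H1), if $y\in\div z$ and $w\in z\curlyvee y$, then $w_1\in z_1\curlyvee y_1 = z_1\curlyvee z_2\equiv\top$; for (H2), if $y\in\div z$ and $v\in\div y$ then $v_1=y_2\preceq z_1$, i.e., $v\preceq z$. Non-emptiness of $\div z$ is witnessed by the snapshot $(z_2,z_1)$, which lies in $S^{C_\omega}_{\mathsf L}$ since $z_2\curlyvee z_1\equiv\top$. Finally, the identity $\top_{\mathsf S(\mathsf L)}=D^{C_\omega}_{\mathsf L}$ follows because, by part (1) and the surjectivity remark, $z$ is maximal in $\mathsf S(\mathsf L)$ iff $w_1\preceq z_1$ for every snapshot $w$, iff $x\preceq z_1$ for every $x\in L$, iff $z_1\in\top_{\mathsf L}$. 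Part (3) is then immediate from Definitions~\ref{def-sem-SHCw} and~\ref{def-Nmatrix-Cw}.

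The only step that is not pure unfolding is the surjectivity of $\pi_1$; without it, maxima taken in $\mathsf S(\mathsf L)$ would only have to dominate the image of $\pi_1$, not all of $L$, and the identifications of $z\curlywedge w$, $z\curlyvee w$, $z\multimap w$, and $\top_{\mathsf S(\mathsf L)}$ with their projected counterparts could in principle fail. I expect this `surjectivity on first coordinates' to be the main conceptual point to pin down cleanly, after which all three parts follow by routine coordinate-wise verification.
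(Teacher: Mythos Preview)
Your proposal is correct and follows essentially the same approach as the paper: the paper isolates exactly your surjectivity observation as its ``Fact~1'' (any $a\in L$ paired with any $b\in\top_{\mathsf L}$ yields a snapshot), and then uses it to transfer the supremoid, infimoid, $\mathsf R(z,w)$, and $\top$ computations from $\mathsf S(\mathsf L)$ down to $\mathsf L$ coordinate-wise, just as you do. The only cosmetic difference is that the paper spells out the reduction $z\curlywedge u\preceq w\ \Leftrightarrow\ z_1\curlywedge u_1\preceq w_1$ as a separate ``Fact~2'' rather than appealing to Proposition~\ref{stable-wedge-02}; your invocation of that proposition is slightly loose (it concerns stable subsets rather than single snapshots), but the underlying argument you sketch is the same and is correct.
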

\begin{proof} 1. Clearly, $\preceq$ is a preorder in $\mathsf S(\mathsf L)$. Observe that $\top=\mathsf{Max}(L) \neq \emptyset$: for instance, $\emptyset \neq u\multimap u\subseteq\top$, for any $u \in L$. \\[1mm]
{\bf Fact 1:} If $a \in L$ and $b \in \top$ then $(a,b) \in S^{C_\omega}_{\mathsf L}$.\\[1mm]
Indeed, if $b \in \top$ and $c \in L$ then  $c \preceq b \preceq a \curlyvee b$. This means that $a \curlyvee b \equiv \top$, and so $(a,b) \in S^{C_\omega}_{\mathsf L}$. This proves  {\bf Fact~1}.\\[1mm]
As a direct consequence of {\bf Fact 1}, and given that $z_1 \curlywedge w_1 \neq \emptyset \neq z_1 \curlyvee w_1$, it follows that   $z \curlywedge w \neq \emptyset \neq z \curlyvee w$ for every $z,w \in S^{C_\omega}_{\mathsf L}$.
Now, let us prove that $z\curlyvee w$ is the supremoid in  $\mathsf S(\mathsf L)$ of $z$ and $w$. Thus, let $u \in z\curlyvee w$. By definition, $u_1 \in z_1 \curlyvee w_1$ and so $z_1,w_1 \preceq u_1$. Then, $z,w \preceq u$ and so $u \in \mathsf{Ub}(\{z,w\})$. Let $x \in \mathsf{Ub}(\{z,w\})$. Then, $z,w \preceq x$ and so $z_1,w_1 \preceq x_1$, therefore $z_1 \curlyvee w_1 \preceq x_1$. From this, $u_1 \preceq x_1$, hence $u \preceq x$. This shows that $u \in \mathsf{Min}(\mathsf{Ub}(\{z,w\}))$, that is, $z\curlyvee w \subseteq \mathsf{Min}(\mathsf{Ub}(\{z,w\}))$. Now, let  $u \in \mathsf{Min}(\mathsf{Ub}(\{z,w\}))$. Since $z,w \preceq u$ then $z_1,w_1 \preceq u_1$, that is, $u_1 \in \mathsf{Ub}(\{z_1,w_1\})$. Let $a \in  \mathsf{Ub}(\{z_1,w_1\})$. By {\bf Fact~1}, $(a,b) \in  S^{C_\omega}_{\mathsf L}$ for any $b \in \top$ such that $z,w \preceq (a,b)$. Thus, $u \preceq (a,b)$ which implies that $u_1 \preceq a$. Then, $u_1 \in \mathsf{Min}(\mathsf{Ub}(\{z_1,w_1\})) = z_1 \curlyvee w_1$. That is, $u \in  z \curlyvee w$ and so $z\curlyvee w = \mathsf{Min}(\mathsf{Ub}(\{z,w\}))$. The proof that $z\curlywedge w = \mathsf{Max}(\mathsf{Lb}(\{z,w\}))$ is analogous. This shows that  $\mathsf S(\mathsf L)$ is an hyperlattice where  $z \equiv w$ in  $\mathsf S(\mathsf L)$ iff $z_1 \equiv w_1$ in $\mathsf L$. \\[1mm]
2. By {\bf Fact~1} above, and given that $z_1 \multimap w_1 \neq \emptyset$, it follows that   $z \multimap w \neq \emptyset$ for every $z,w \in S^{C_\omega}_{\mathsf L}$. \\[1mm]
{\bf Fact 2:} For every $x,z,w  \in S^{C_\omega}_{\mathsf L}$ it holds: $z \curlywedge x \preceq w$ iff  $z_1 \curlywedge x_1 \preceq w_1$.\\[1mm]
Indeed, suppose that  $z \curlywedge x \preceq w$, and let $a \in z_1 \curlywedge x_1$. Let $b \in \top$. By {\bf Fact 1}, $(a,b) \in  S^{C_\omega}_{\mathsf L}$ such that $(a,b) \in z \curlywedge x$. By hypothesis, $(a,b) \preceq w$ and so $a \preceq w_1$. That is, $z_1 \curlywedge x_1 \preceq w_1$. Conversely, suppose that  $z_1 \curlywedge x_1 \preceq w_1$ and let  $u \in z \curlywedge x$. Then,   $u_1 \in z_1 \curlywedge x_1$, which implies that $u_1 \preceq w_1$. This means that $u \preceq w$, therefore  $z \curlywedge x \preceq w$. This proves  {\bf Fact~2}.\\[1mm] 
Now, given $z,w \in S^{C_\omega}_{\mathsf L}$, let $u \in z \multimap w$. Then,  $u_1 \in z_1 \multimap w_1$, hence  $z_1 \curlywedge u_1 \preceq w_1$. By {\bf Fact~2}, $z \curlywedge u \preceq w$ and so  $u \in \mathsf R(z,w)$. Now, suppose that  $z \curlywedge x \preceq w$. By {\bf Fact~2} once again,  $z_1 \curlywedge x_1 \preceq w_1$, which implies that $x_1 \preceq  z_1 \multimap w_1$. From this, $x_1 \preceq u_1$, hence $x \preceq u$. That is, $u \in \mathsf{Max}(\mathsf R(z,w))$, proving that  $z \multimap w \subseteq \mathsf{Max}(\mathsf R(z,w))$. Conversely, let $u \in \mathsf{Max}(\mathsf R(z,w))$. Then,  $z \curlywedge u \preceq w$ and so  $z_1 \curlywedge u_1 \preceq w_1$, by {\bf Fact~2}. Let $a \in L$ such that  $z_1 \curlywedge a \preceq w_1$. For any $b \in \top$ it follows, by {\bf Fact~1} and {\bf Fact~2}, that  $z \curlywedge (a,b) \preceq w$, hence $(a,b) \preceq u$. From this, $a \preceq u_1$, showing that $u_1 \in \mathsf{Max}(\{a \in L \ : \  z_1 \curlywedge a \preceq w_1\}) = z_1 \multimap w_1$, by Proposition~\ref{prop:char:IHL}. That is, $u \in z \multimap w$. This shows that $z \multimap w =\mathsf{Max}(\mathsf R(z,w))$ and so $\mathsf S(\mathsf L)$ is an IHL, by Proposition~\ref{prop:char:IHL}.

    Finally, let $z\in S^{C_\omega}_{\mathsf L}$. We have that
    \begin{align*}
        z_1\curlyvee z_2&\equiv\top\nonumber\\
        \div z&=\{w\in  S^{C_\omega}_{\mathsf L} \ : \ w_1=z_2\mbox{ and }w_2\preceq z_1\}\nonumber\\
        \div\div z&=\{u\in  S^{C_\omega}_{\mathsf L} \ : \ u_1\preceq z_1\mbox{ and }u_2\preceq z_2\}
    \end{align*}
    Since for $z,w\in  S^{C_\omega}_{\mathsf L}$, $z\preceq_{\mathsf S(\mathsf L)}w$ iff $z_1\preceq_{\mathsf L}w_1$, these expressions witness the validity of (H1) and (H2) for $\mathsf S(\mathsf L)$ (recall Definition~\ref{def:SHCw}). That is, $\mathsf S(\mathsf L)$ is a HC$_\omega$A. Clearly,  $D^{C_\omega}_{\mathsf L}= \mathsf{Max}(S^{C_\omega}_{\mathsf L})$.\\[1mm]
    3. It follows from the definitions and by the fact that $D^{C_\omega}_{\mathsf L}= \mathsf{Max}(S^{C_\omega}_{\mathsf L})$.
\end{proof}

\begin{definition} [Hyper Swap structures semantics for  $C_\omega$] \label{def-sem-swap-Cw} Let $\Gamma \cup \{\varphi\}$ be a set of formulas over $\Sigma_\omega$. Then, $\varphi$ is a semantical consequence of $\Gamma$ w.r.t. hyper swap structures, denoted by $\Gamma \models_{C_\omega}^{HSW} \varphi$, whenever $\Gamma \models_{\mathcal M(\mathsf L)} \varphi$ for every implicative hyper lattice $\mathsf L$.
\end{definition}

\noindent In order to prove  soundness and completeness of $C_\omega$ w.r.t. hyper swap structures semantics, we recall here some well-known  notions and results concerning (Tarskian) logics.

Given a Tarskian and finitary logic {\bf L} and a set of formulas $\Delta \cup \{ \varphi\}$ of {\bf L}, the set $\Delta$ is said to be {\em $\varphi$-saturated in} {\bf L} if the following holds:~(i)~$\Delta \nvdash_{\bf L} \varphi$; and~(ii)~if $\psi \notin \Delta$ then $\Delta,\psi \vdash_{\bf L}\varphi$.

It follows immediately that any  $\varphi$-saturated in a  Tarskian logic is deductively closed, i.e.: $\psi \in \Delta$ iff $\Delta \vdash_{\bf L} \psi$.

By a classical result proven by Lindenbaum and \L o\'s,  if $\Gamma \cup \{ \varphi\}$ is a set of formulas of a Tarskian and finitary logic {\bf L} such that $\Gamma \nvdash_{\bf L} \varphi$, then there exists a $\varphi$-saturated set $\Delta$  such that $\Gamma \subseteq \Delta$.\footnote{For a proof of this result see, for instance, \cite[Theorem~22.2]{wojcicki1984lectures} or~\cite[Theorem~2.2.6]{carnielli2016paraconsistent}.} Since $C_\omega$ is a Tarskian and finitary logic, Lindenbaum-\L o\'s Theorem holds for it.

\begin{theorem} [Soundness and completeness of  $C_\omega$ w.r.t. hyperstructures semantics, version~2] \label{Sound-comple} \ \\
Let $\Gamma \cup \{\varphi\}$ be a set of formulas over $\Sigma_\omega$. The following assertions are equivalent:
\begin{enumerate}
    \item $\Gamma \vdash_{C_\omega} \varphi$;
    \item $\Gamma \models_{\mathbb{HC}_\omega} \varphi$;
    \item $\Gamma \models_{C_\omega}^{HSW} \varphi$.
\end{enumerate}
\end{theorem}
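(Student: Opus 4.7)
The plan is to leverage Theorem~\ref{Sound-comple0}, which already establishes $(1)\Leftrightarrow(2)$, and close the triangle via $(2)\Rightarrow(3)\Rightarrow(1)$. The direction $(2)\Rightarrow(3)$ is immediate from Proposition~\ref{swap-SHCw}: for every IHL $\mathsf L$, $\mathsf S(\mathsf L)$ belongs to $\mathbb{HC}_\omega$ and $\mathcal M(\mathsf L)=\mathcal M_{\mathsf S(\mathsf L)}$, so the family $\{\mathcal M(\mathsf L) : \mathsf L \text{ an IHL}\}$ is a subfamily of $\{\mathcal M_{\mathsf H} : \mathsf H \in \mathbb{HC}_\omega\}$, and validity throughout the larger class forces validity throughout the subfamily.

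For $(3)\Rightarrow(1)$ I would argue contrapositively. Assume $\Gamma \nvdash_{C_\omega} \varphi$. By the Lindenbaum--\L o\'s theorem, extend $\Gamma$ to a $\varphi$-saturated set $\Delta$. Then build the canonical IHL $\mathsf L_\Delta$ whose universe is $For(\Sigma_\omega)$, with preorder $\alpha \preceq_\Delta \beta$ iff $\Delta \vdash_{C_\omega} \alpha\to\beta$, and hyperoperations
$$\alpha \curlywedge \beta := [\alpha\land\beta]_\Delta, \qquad \alpha \curlyvee \beta := [\alpha\lor\beta]_\Delta, \qquad \alpha \multimap \beta := [\alpha\to\beta]_\Delta,$$
where $[\gamma]_\Delta$ denotes the $\equiv_\Delta$-class of $\gamma$. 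Axioms (AX1)--(AX8) and (MP) guarantee that these three sets are, respectively, the infimoid, the supremoid, and (via Proposition~\ref{prop:char:IHL}) $\mathsf{Max}(\mathsf R(\alpha,\beta))$; by saturation and deductive closure, $\top = \Delta$.

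Next, define the canonical valuation $v\colon For(\Sigma_\omega) \to S^{C_\omega}_{\mathsf L_\Delta}$ by $v(\alpha) := (\alpha,\neg\alpha)$. That $v(\alpha) \in S^{C_\omega}_{\mathsf L_\Delta}$ holds because (EM) forces $\alpha\lor\neg\alpha \in \top$, whence $\alpha \curlyvee \neg\alpha \equiv \top$. For each binary connective $\# \in \{\land,\lor,\to\}$ the required inclusion $v(\alpha\#\beta) \in v(\alpha)\,\tilde\#\,v(\beta)$ reduces to the trivially satisfied first-coordinate condition $\alpha\#\beta \in \alpha\,\tilde\#\,\beta$. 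For negation, $v(\neg\alpha) \in \div\,v(\alpha)$ demands $\neg\alpha = v(\alpha)_2$ (trivial) together with $\neg\neg\alpha \preceq_\Delta \alpha$, which is precisely axiom (cf). Since $v(\gamma)$ is designated iff $v(\gamma)_1 \in \top = \Delta$, the inclusion $\Gamma \subseteq \Delta$ yields $v[\Gamma] \subseteq D^{C_\omega}_{\mathsf L_\Delta}$, while $\varphi \notin \Delta$ gives $v(\varphi) \notin D^{C_\omega}_{\mathsf L_\Delta}$. Therefore $\Gamma \not\models_{\mathcal M(\mathsf L_\Delta)} \varphi$, and a fortiori $\Gamma \not\models_{C_\omega}^{HSW} \varphi$, as required.

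The bookkeeping-heavy step is verifying that $\mathsf L_\Delta$ is genuinely a Sette implicative hyperlattice; the identity $\alpha \multimap \beta = \mathsf{Max}(\mathsf R(\alpha,\beta))$ hinges on the deduction-theorem-style adjunction between $\land$ and $\to$ extractable from (AX2)--(AX5) plus (MP). The conceptually subtle point is that $\neg$ does not preserve $\equiv_\Delta$, but this causes no harm here: $\mathsf L_\Delta$ is used only as an IHL over the positive fragment, while the paraconsistent negation is reintroduced externally through the $\div$ operator of the overlying hyper swap structure $\mathsf S(\mathsf L_\Delta)$---which is precisely the pay-off of passing from algebras to hyperalgebras at the base level.
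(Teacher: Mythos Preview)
Your proposal is correct and follows essentially the same route as the paper: the paper also derives $(2)\Rightarrow(3)$ from Proposition~\ref{swap-SHCw} and proves $(3)\Rightarrow(1)$ contrapositively via the Lindenbaum--\L o\'s construction of the canonical IHL $\mathsf L_\Delta$ on $For(\Sigma_\omega)$ with the same preorder, the same hyperoperations (given as $\equiv_\Delta$-classes), and the same valuation $v_\Delta(\alpha)=(\alpha,\neg\alpha)$. The only cosmetic difference is that the paper establishes $(1)\Rightarrow(2)$ by citing the soundness half directly rather than invoking the full equivalence $(1)\Leftrightarrow(2)$ of Theorem~\ref{Sound-comple0}.
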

\begin{proof}
$(1) \Rightarrow (2)$ (Soundness of   $C_\omega$ w.r.t. HC$_\omega$As). This was proven in~\cite[Theorem~1]{con:gol:rob:2025}.\\[1mm]
$(2) \Rightarrow (3)$. It follows by Proposition~\ref{swap-SHCw}, items~(2) and~(3).\\[1mm]
$(3) \Rightarrow (1)$ (Completeness of   $C_\omega$ w.r.t. hyper swap structures semantics). Suppose that $\Gamma \nvdash _{C_\omega} \varphi$. Then, by Lindenbaum-\L o\'s result mentioned above, there exists a $\varphi$-saturated set $\Delta$ in $C_\omega$ such that $\Gamma \subseteq \Delta$. Now, define  a relation $\preceq_\Delta$ over $For(\Sigma_\omega)$ as follows: $\alpha \preceq_\Delta \beta$ iff $\Delta \vdash _{C_\omega} \alpha \to \beta$. Clearly, $\preceq_\Delta$ is a preorder, given that $C_\omega$  contains positive intuitionistic logic $IPL^+$, hence $\vdash_{C_\omega} \alpha \to \alpha$, and  $\alpha \to \beta, \, \beta \to \gamma \vdash_{C_\omega} \alpha \to \gamma$. Observe that, with this preorder, $\alpha \equiv \beta$ iff  $\Delta \vdash _{C_\omega} \alpha \leftrightarrow \beta$, where $\alpha \leftrightarrow \beta$ is an abbreviation for  $(\alpha \to \beta) \land (\beta \to \alpha)$. Using again the fact that $C_\omega$ is an axiomatic extension of $IPL^+$, it is easy to prove that $\langle For(\Sigma_\omega), \preceq_\Delta \rangle$ is an hyperlattice such that $\alpha \curlywedge \beta =\{\gamma \ : \ \Delta \vdash _{C_\omega} \gamma \leftrightarrow (\alpha \land\beta)\}$, and  $\alpha \curlyvee \beta =\{\gamma \ : \ \Delta \vdash _{C_\omega} \gamma \leftrightarrow (\alpha \vee\beta)\}$. Moreover, it is an IHL (that we will call $\mathsf{L}_\Delta$)  such that $\alpha \multimap \beta =\{\gamma \ : \ \Delta \vdash _{C_\omega} \gamma \leftrightarrow (\alpha \to\beta)\}$. Observe that $\top=\{\gamma \ : \ \Delta \vdash _{C_\omega} \gamma\}=\Delta$.

Let $\mathsf S(\mathsf{L}_\Delta)$ be the hyper swap structure for $C_\omega$ over $\mathsf{L}_\Delta$, with domain $S^{C_\omega}_{\mathsf{L}_\Delta}$, as in Definition~\ref{def-hswap-Cw}, and let $\mathcal{M}(\mathsf{L}_\Delta)$ be the associated Nmatrix (see Definition~\ref{def-Nmatrix-Cw}). Notice that $S^{C_\omega}_{\mathsf{L}_\Delta}=\{(\alpha,\beta) \ : \ \Delta \vdash _{C_\omega} \alpha \vee \beta\}$ and $D^{C_\omega}_{\mathsf{L}_\Delta}=\{(\alpha,\beta) \ : \ \Delta \vdash _{C_\omega} \alpha\}$ . Let $v_\Delta:For(\Sigma_\omega) \to S^{C_\omega}_{\mathsf{L}_\Delta}$ be the canonical map given by $v_\Delta(\alpha)= (\alpha,\neg \alpha)$, for every $\alpha$. It is easy to see that $v_\Delta$ is a valuation over $\mathcal{M}(\mathsf{L}_\Delta)$. Indeed, 
$$v_\Delta(\alpha\land \beta)= (\alpha\land\beta,\neg (\alpha\land \beta)) \in \{(\gamma, \delta) \in S^{C_\omega}_{\mathsf{L}_\Delta} \ : \ \gamma \in \alpha \curlywedge \beta\}= v_\Delta(\alpha) \curlywedge v_\Delta(\beta),$$
given that $\alpha \land \beta \in \alpha \curlywedge \beta=\{\gamma \ : \ \Delta \vdash _{C_\omega} \gamma \leftrightarrow (\alpha \land\beta)\}$. Analogously we prove that 
$$v_\Delta(\alpha\vee \beta) \in v_\Delta(\alpha) \curlyvee v_\Delta(\beta) \ \ \mbox{ and } \ \ v_\Delta(\alpha\to \beta) \in v_\Delta(\alpha) \multimap v_\Delta(\beta).$$
Finally, by axiom (cf) it is immediate to see that
$$v_\Delta(\neg\alpha)= (\neg\alpha,\neg\neg \alpha) \in \{(\gamma, \delta) \in S^{C_\omega}_{\mathsf{L}_\Delta} \ : \ \gamma = \neg\alpha \ \mbox{ and } \ \Delta  \vdash_{C_\omega} \delta \to \alpha\}= \div v_\Delta(\alpha).$$
Moreover, $v_\Delta(\alpha)\in D^{C_\omega}_{\mathsf{L}_\Delta}$ iff $\Delta \vdash_{C_\omega} \alpha$. From this, $v_\Delta(\alpha)\in D^{C_\omega}_{\mathsf{L}_\Delta}$ for every $\alpha \in \Gamma$, while $v_\Delta(\varphi)\notin D^{C_\omega}_{\mathsf{L}_\Delta}$, given that $\Delta \nvdash _{C_\omega} \varphi$. This shows that $\Gamma \not\models_{\mathcal{M}(\mathsf{L}_\Delta)} \varphi$ and so $\Gamma \not\models_{C_\omega}^{HSW} \varphi$.

This completes the proof.
\end{proof}

\section{An Equivalence of Categories between IHL and EHC$_\omega$A}\label{sect:equiv01}

Theorem~\ref{Sound-comple} is interesting by itself, as it provides another class of hyperstructures that characterize the logic  $C_\omega$. In this section, it will be shown that, more than this, hyper swap structures are crucial in order to adapt the Kalman functor to the non-deterministic context, as discussed in Remark~\ref{remark:hyperswap}.

\begin{definition}
    The category \textbf{IHL} is the one where the objects are IHLs and the morphisms are just the usual morphisms of hyperalgebras. In other words, given $\mathsf L_1,\mathsf L_2\in\mbox{IHL}$, a function $f:L_1\rightarrow L_2$ is a morphism from $\mathsf L_1$ to $\mathsf L_2$ if for all $x,y,z\in L_1$ we have the following:
    \begin{enumerate}
        \item $z\in x\curlywedge y$ implies $f(z)\in f(x)\curlywedge f(y)$;
        \item $z\in x\curlyvee y$ implies $f(z)\in f(x)\curlyvee f(y)$;
        \item $z\in x\multimap y$ implies $f(z)\in f(x)\multimap f(y)$
    \end{enumerate}
\end{definition}

\begin{lemma}\label{morphism-lemma}
    Let $\mathsf L_1,\mathsf L_2$ be IHLs and $f:L_1\rightarrow L_2$ be a function. If $f$ is an IHL-morphism then for all $x,y\in L_1$, $x\preceq y$ implies $f(x)\preceq f(y)$.
\end{lemma}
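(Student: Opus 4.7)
The strategy is to reduce the order preservation to the preservation of the meet hyperoperation, which the morphism hypothesis gives for free. Concretely, the plan is to prove the following characterization in any m-hyperlattice: for all $x,y$,
\[
x \preceq y \quad \Longleftrightarrow \quad x \in x \curlywedge y.
\]
Once this is established, the lemma is almost immediate: if $x \preceq y$ then $x \in x \curlywedge y$ in $\mathsf L_1$, so by condition~(1) of the morphism definition $f(x) \in f(x) \curlywedge f(y)$ in $\mathsf L_2$, whence by the same characterization $f(x) \preceq f(y)$.

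For the characterization itself, I would argue directly from Definition~\ref{def:m-hlattices}. Assume first $x \preceq y$. Then since $\preceq$ is reflexive, $x \preceq x$ and $x \preceq y$, i.e. $x \in \mathsf{Lb}(\{x,y\})$. For any $z \in \mathsf{Lb}(\{x,y\})$ we have $z \preceq x$ by definition of $\mathsf{Lb}$, and this trivially includes the case of $x$ itself; hence $\mathsf{Lb}(\{x,y\}) \preceq x$, which means $x \in \mathsf{Max}(\mathsf{Lb}(\{x,y\})) = x \curlywedge y$. Conversely, if $x \in x \curlywedge y = \mathsf{Max}(\mathsf{Lb}(\{x,y\}))$, then in particular $x \in \mathsf{Lb}(\{x,y\})$, so $x \preceq y$.

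The proof has essentially no subtle step: the main (and only) thing to be careful about is the direction $x \preceq y \Rightarrow x \in x \curlywedge y$, where one must notice that the maximality requirement $\mathsf{Lb}(\{x,y\}) \preceq x$ is automatic from the very definition of $\mathsf{Lb}$. No use of \axt\ or of the implication $\multimap$ is needed; the lemma depends only on the hyperlattice reduct and on clause~(1) of the morphism definition. An alternative route would be to use Proposition~\ref{prop-SIHL}(2) to characterize $x \preceq y$ by $x \multimap y \equiv \top$ and then invoke clause~(3) of the morphism definition, but the $\curlywedge$-based argument is more elementary since it does not require the IHL structure to be used.
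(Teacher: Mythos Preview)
Your proof is correct and follows essentially the same route as the paper's: both arguments hinge on the characterization $x \preceq y \Leftrightarrow x \in x \curlywedge y$ (derived directly from $x \curlywedge y = \mathsf{Max}(\mathsf{Lb}(\{x,y\}))$), then push this through clause~(1) of the morphism definition. Your write-up is simply more explicit about verifying both directions of the characterization, whereas the paper states them without further justification.
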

\begin{proof} By Definition~\ref{def:m-hlattices}, $x\curlywedge y=\mathsf{Max}(\mathsf{Lb}(x,y))$ for any $x,y \in L$, for any m-hyperlattice $\mathsf{L}$. Now, suppose that $f$ is an IHL-morphism and $x,y\in L_1$ such that $x\preceq y$. Then, $x \in x\curlywedge y$ and so  $f(x)\in f(x)\curlywedge f(y)$. Using once again the definition of infimoid, it follows that  $f(x)\preceq f(y)$. 
\end{proof}

\begin{definition}
    The category \textbf{HC$_\omega$A} is the one where the objects are HC$_\omega$As and the morphism are just the usual morphisms of hyperalgebras. In other words, given $\mathsf H_1,\mathsf H_2\in\mbox{HC}_\omega\mbox{A}$, a function $f:H_1\rightarrow H_2$ is a morphism from $\mathsf H_1$ to $\mathsf H_2$ if for all $x,y,z\in A_1$ we have the following:
    \begin{enumerate}
        \item $z\in x\curlywedge y$ implies $f(z)\in f(x)\curlywedge f(y)$;
        \item $z\in x\curlyvee y$ implies $f(z)\in f(x)\curlyvee f(y)$;
        \item $z\in x\multimap y$ implies $f(z)\in f(x)\multimap f(y)$;
        \item $z\in\div x$ implies $f(z)\in\div f(x)$.
    \end{enumerate}
\end{definition}



\begin{theorem}
    The hyper swap structure construction provides a Kalman functor $\mathsf S:\textbf{IHL}\rightarrow \textbf{HC}_\omega\textbf{A}$.
\end{theorem}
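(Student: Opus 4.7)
The plan is to define the functor $\mathsf S$ as follows. On objects, the assignment $\mathsf L \mapsto \mathsf S(\mathsf L)$ comes from Definition~\ref{def-hswap-Cw}, and Proposition~\ref{swap-SHCw}(2) already guarantees that $\mathsf S(\mathsf L)$ is a HC$_\omega$A. On morphisms, I would set $\mathsf S(f)(z) := (f(z_1), f(z_2))$ for every IHL-morphism $f \colon \mathsf L_1 \to \mathsf L_2$ and every snapshot $z = (z_1, z_2) \in S^{C_\omega}_{\mathsf L_1}$. The proof then reduces to three tasks: (i) $\mathsf S(f)$ actually lands in $S^{C_\omega}_{\mathsf L_2}$; (ii) $\mathsf S(f)$ preserves each of the four hyperoperators $\curlywedge, \curlyvee, \multimap, \div$; and (iii) functoriality, i.e., $\mathsf S(\mathrm{id}) = \mathrm{id}$ and $\mathsf S(g \circ f) = \mathsf S(g) \circ \mathsf S(f)$, which are immediate from the componentwise definition.

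The key auxiliary fact, which I expect to be the main obstacle, is that any IHL-morphism $f$ preserves the set of maxima $\top$. The decisive observation is that in any IHL one has the identity $x \multimap x = \mathsf{Max}(\mathsf R(x,x)) = \mathsf{Max}(L) = \top$, since $x \curlywedge z \preceq x$ always holds (elements of an infimoid are lower bounds of $\{x,z\}$). Consequently, if $a \in \top_{\mathsf L_1}$, then $a \in a \multimap a$ in $\mathsf L_1$, so by the morphism axiom for $\multimap$ we obtain $f(a) \in f(a) \multimap f(a) = \top_{\mathsf L_2}$. Together with Lemma~\ref{morphism-lemma}, which immediately implies that IHL-morphisms preserve the similarity relation $\equiv$, this gives well-definedness of $\mathsf S(f)$: given $z \in S^{C_\omega}_{\mathsf L_1}$, I pick any $a \in z_1 \curlyvee z_2$; the hypothesis $z_1 \curlyvee z_2 \equiv \top_{\mathsf L_1}$ places $a$ in $\top_{\mathsf L_1}$, so $f(a) \in f(z_1) \curlyvee f(z_2)$ lies in $\top_{\mathsf L_2}$, and by stability of the supremoid (Proposition~\ref{stable-wedge-02}) the whole set $f(z_1) \curlyvee f(z_2)$ becomes equivalent to $\top_{\mathsf L_2}$, as required.

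Once this is in hand, preservation of the hyperoperators is routine. For $\# \in \{\curlywedge, \curlyvee, \multimap\}$, the condition $u \in z \# w$ in $\mathsf S(\mathsf L_1)$ unfolds to $u_1 \in z_1 \# w_1$ in $\mathsf L_1$; applying the $\#$-part of the morphism axiom for $f$ gives $f(u_1) \in f(z_1) \# f(w_1)$, and combined with the already-established well-definedness this yields $\mathsf S(f)(u) \in \mathsf S(f)(z) \# \mathsf S(f)(w)$. For $\div$, the condition $u \in \div z$ says $u_1 = z_2$ and $u_2 \preceq z_1$; applying $f$ gives $f(u_1) = f(z_2)$ directly, while $f(u_2) \preceq f(z_1)$ follows from Lemma~\ref{morphism-lemma}, so $\mathsf S(f)(u) \in \div \mathsf S(f)(z)$. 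Functoriality then closes the argument.
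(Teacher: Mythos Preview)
Your proposal is correct and follows essentially the same approach as the paper: define $\mathsf S(f)$ componentwise, verify well-definedness via preservation of $\top$, check that each hyperoperator is preserved, and note that functoriality is immediate. The paper's proof is considerably more terse, asserting well-definedness and the morphism property without spelling out the $x \multimap x = \top$ argument or the per-connective verification that you supply, but the underlying strategy is identical.
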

\begin{proof}
     We only need to extend $\mathsf S$ to  morphisms. Let $f:\mathsf L_1\rightarrow \mathsf L_2$ be an \textbf{IHL}-morphism. Define $\mathsf S(f):S^{C_\omega}_{\mathsf L_1}\rightarrow S^{C_\omega}_{\mathsf L_2}$ by the rule $\mathsf S(f)(z_1,z_2):=(f(z_1),f(z_2))$. Since $f$ is a morphism and $z_1\curlyvee z_2\equiv\top$, we get $f(z_1)\curlyvee f(z_2)\equiv\top$ and $\mathsf{S}(f)$ is well defined. Moreover, it is immediate to see that  $\mathsf S(f)$ is a morphism $\mathsf S(f):S(\mathsf L_1)\rightarrow S(\mathsf L_2)$ in $\textbf{HC}_\omega\textbf{A}$. After that, the fact that $\mathsf S(1_{\mathsf L_1})=1_{\mathsf S(\mathsf L_1)}$ and $\mathsf S(f\circ g)=\mathsf S(f)\circ \mathsf S(g)$ is a straightforward calculation involving the above definitions.
\end{proof}

\begin{remark}\label{enriched-rem}
    To obtain a functor in the reverse direction, we need to enrich the structure of a hyper $C_\omega$ algebra. The goal is to identify a class of hyper $C_\omega$ algebras which correspond (up to isomorphisms) to hyper swap structures for $C_\omega$. In order to do this, we will abstract the basic properties of hyper swap structures. Thus, let  $\mathsf S(\mathsf L)$ for a given IHL  $\mathsf L$. We know that the following three facts hold:
\begin{enumerate}
    \item For $z,z'\in S^{C_\omega}_{\mathsf L}$, $z_1=z'_1$ iff $z,z'\in\div w$ for some $w\in \mathsf S^{C_\omega}_{\mathsf L}$. To see this, let $z,z'\in S^{C_\omega}_{\mathsf L}$. Suppose that $z_1=z'_1$, and let $u \in z_2 \curlyvee z'_2$. Since $z_1 \curlyvee z_2 \equiv \top$ then, for every $x \in L$, $x \preceq z_1 \curlyvee z_2 \preceq z_1 \curlyvee (z_2 \curlyvee z'_2) = z_1 \curlyvee u = u \curlyvee z_1$. From this, $u \curlyvee z_1 \equiv \top$ and then $w:=(u,z_1) \in  S^{C_\omega}_{\mathsf L}$ is such that  $z,z'\in\div w$. Conversely, suppose that  $z,z'\in\div w$ for some $w\in S^{C_\omega}_{\mathsf L}$. Then $z_1=w_2=z'_1$.

    \item The relation $z\sim z'$ iff $z_1=z'_1$ is an equivalence relation in $S^{C_\omega}_{\mathsf L}$ that \emph{selects the first coordinate}. And by item~1, this relation has an alternative description: $z\sim z'$ iff $z,z'\in\div w$ for some $w\in S^{C_\omega}_{\mathsf L}$.

    \item The negation $\div$, together with $\sim$, determine $S^{C_\omega}_{\mathsf L}$ in the following sense: for $z,z'\in S^{C_\omega}_{\mathsf L}$, if $z\curlyvee z'\equiv\top$ (which occurs iff $z_1\curlyvee z'_1\equiv\top$) then there exists $w\in S^{C_\omega}_{\mathsf L}$ (namely $w=(z_1,z'_1)$) such that $z\sim w$ and $z'\sim \div w$. Moreover, $z=z'$ iff $z\sim z'$ and $\div z\sim\div z'$.\footnote{For $X,Y\subseteq\mathsf A$, $X\sim Y$ denotes that $x\sim y$ for all $x\in X$ and all $y\in Y$.}
\end{enumerate}
\end{remark}

\begin{definition}[Enriched Hyper C$_\omega$ Algebras] \label{def:EHCw}
    Let $\mathsf A=\langle A,\curlywedge,\curlyvee,\multimap,\div\rangle$ be a HC$_\omega$A. We say that $\mathsf A$ is an {\em enriched hyper C$_\omega$ algebra (EHC$_\omega$A)} if it satisfies the following additional axioms, for all $x,y,z\in A$:
    \begin{description}
        \item[E0 -] $x\in\div\div x$.
        \item[E1 -] $\div x$ is stable. In other words, if $y\in\div x$ and $z\in\div x$ then $y\equiv z$.
        \item[E2 -] The following relation $\sim$ is transitive (which implies, considering E0, that it is an equivalence relation):
        $$x\sim y\mbox{ iff there exists }z\mbox{ such that }x,y\in\div z.$$
        (Note that, by E1, $x\sim y$ implies $x\equiv y$.)
        \item[E3 -] If $x\curlyvee y\equiv\top$ then there exists $z$ such that $x\sim z$ and $y\sim\div z$.
        \item [E4 -] If $x\sim y$ and $\div x\sim\div y$ then $x=y$.
    \end{description}
    For $x\in A$ we denote $[x]:=\{y \ : \ x\sim y\}$ and
    $$\mathsf U(\mathsf A):=A/{\sim}=\{[x] \ : \ x\in A\}.$$
    The category \textbf{EHC$_\omega$A} is the one where the objects are EHC$_\omega$As and the morphism are the \textbf{HC$_\omega$A}-morphisms $f:\mathsf A_1\rightarrow \mathsf A_2$.
\end{definition}

\begin{remark}
Hyper $C_\omega$ algebras play a central role in our discussion: they allow us to define an appropriate subcategory of enriched hyperstructures equipped with a suitable congruence relation $\sim$, such that for any of such hyperstructures $\mathsf A$, the quotient $U(\mathsf A)=A/{\sim}$ yields an IHL, as it will be shown in Proposition~\ref{functor U- Cw}. 
Similar ideas already appear in \cite{odin:08} in the context of twist structures for Nelson logic N4. Specifically,  aiming to abstract the notion of twist structures for N4,  N4-lattices  (see~\cite[Definition~8.4.1]{odin:08}) establishes conditions analogous to those our EHC$_\omega$A  framework impose  to abstract the notion of hyper swap structures for $C_\omega$. For instance, condition 3 of Definition~8.4.1 in \cite{odin:08} is essentially the same as our Axiom E2, while  Axioms E3 and E4 transpose to our setting the content of condition 5 in that definition.
\end{remark}

We observe that enriched hyper $C_\omega$ algebras effectively abstract the notion of hyper swap structures for  $C_\omega$, as the following result shows:

\begin{proposition}\label{hyper-swap-is-enriched}
    Let $\mathsf L$ be an IHL. Then the hyper swap $\mathsf S(\mathsf L)$ is an enriched hyper $C_\omega$ algebra.
\end{proposition}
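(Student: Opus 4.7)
My plan is to verify axioms E0--E4 in turn, using the explicit description of the hyperoperation $\div$ in $\mathsf S(\mathsf L)$ together with the fact from Proposition~\ref{swap-SHCw}(1) that the induced preorder satisfies $u\preceq v$ iff $u_1\preceq v_1$. The key preliminary observation is that the relation $\sim$ of Definition~\ref{def:EHCw} coincides with equality on first coordinates: if $x,y\in\div z$ then $x_1=z_2=y_1$, and conversely, given $x,y\in S^{C_\omega}_{\mathsf L}$ with $x_1=y_1$ I would pick any $t\in\top$ (nonempty because $u\multimap u\subseteq\top$ for any $u\in L$) and set $z:=(t,x_1)$. By Fact~1 of the proof of Proposition~\ref{swap-SHCw} this pair lies in $S^{C_\omega}_{\mathsf L}$, and both $x,y\in\div z$ since $x_2,y_2\preceq t$.

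Given this identification, the verifications are short. For E0 I would exhibit the explicit witness $w:=(x_2,x_1)$, which belongs to $S^{C_\omega}_{\mathsf L}$ because $x_2\curlyvee x_1\equiv\top$; one checks directly that $w\in\div x$ and $x\in\div w$, so $x\in\div\div x$. Axiom E1 is immediate, since every element of $\div x$ has first coordinate exactly $x_2$, so any two such elements have equivalent first coordinates and are thus similar. For E2, transitivity of $\sim$ is obvious from the characterization $x\sim y\iff x_1=y_1$, as this makes $\sim$ the kernel of the first-coordinate projection.

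For E3, given $x,y$ with $x\curlyvee y\equiv\top$ (equivalently $x_1\curlyvee y_1\equiv\top$), I take $z:=(x_1,y_1)$, which lies in $S^{C_\omega}_{\mathsf L}$ by the very definition of that set; then $z_1=x_1$ yields $x\sim z$, and every $u\in\div z$ has $u_1=y_1$, so $y\sim u$ for all such $u$, i.e.\ $y\sim\div z$. For E4, the hypothesis $x\sim y$ yields $x_1=y_1$, and picking any $u\in\div x$ and $w\in\div y$ (both nonempty, since $\div$ is a hyperoperation, or alternatively by E0) the hypothesis $\div x\sim\div y$ forces $u_1=w_1$, i.e.\ $x_2=y_2$, so $x=y$.

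No step poses a genuine obstacle: everything reduces to unfolding definitions once the characterization $x\sim y\iff x_1=y_1$ is established. The only point requiring minor care is checking that the auxiliary snapshots constructed along the way (in particular $(t,x_1)$ for the converse direction of that characterization, and $(x_2,x_1)$ for E0) really belong to $S^{C_\omega}_{\mathsf L}$, which each time amounts to verifying $u_1\curlyvee u_2\equiv\top$ via Fact~1 of the proof of Proposition~\ref{swap-SHCw} or directly from the defining condition of $S^{C_\omega}_{\mathsf L}$.
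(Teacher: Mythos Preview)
Your proposal is correct and follows essentially the same approach as the paper: both establish the characterization $x\sim y\iff x_1=y_1$ (the paper does this in Remark~\ref{enriched-rem}) and then read off E0--E4 from it. Your choice of witness $(t,x_1)$ with $t\in\top$ for the converse direction of that characterization is slightly simpler than the paper's $(u,z_1)$ with $u\in z_2\curlyvee z'_2$, and your E0 argument via the explicit witness $(x_2,x_1)$ is marginally more direct than the paper's computation of $\div\div z$, but these are cosmetic differences within the same strategy.
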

\begin{proof}
    Let $z\in S^{C_\omega}_{\mathsf L}$. Since
    \begin{align*}
        \div z&=\{w\in S^{C_\omega}_{\mathsf L} \ : \ w_1=z_2\mbox{ and }w_2\preceq z_1\}\\
        \div\div z&=\{u\in S^{C_\omega}_{\mathsf L} \ : \ u_1\preceq z_1\mbox{ and } u_2\preceq z_2\}
    \end{align*}
    we conclude that $\div z$ is stable and $z\in\div\div z$, proving E0 and E1. To prove E2, E3 and E4 just proceed as described in Remark \ref{enriched-rem}.
\end{proof}

From this, the logic naturally associated with enriched hyper $C_\omega$ algebras as a particular class of  HC$_\omega$As  is exactly $C_\omega$. To be more precise, for any $\Gamma \cup \{\varphi\} \subseteq For(\Sigma_\omega)$, define (recalling Definition~\ref{def-sem-SHCw}):  $\Gamma \models_{\mathbb{EHC}_\omega} \varphi$ iff  $\Gamma \models_{\mathcal M_{\mathsf H}} \varphi$ for every $\mathsf H$ which is an  EHC$_\omega$A.

\begin{corollary} \label{coro:logic-enriched}
Let $\Gamma \cup \{\varphi\}$ be a set of formulas over $\Sigma_\omega$. Then:
$\Gamma \vdash_{C_\omega} \varphi$ iff $\Gamma \models_{\mathbb{EHC}_\omega} \varphi$.
\end{corollary}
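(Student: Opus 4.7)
The plan is to derive this corollary as a sandwich between two previously established semantic consequence relations, using the inclusion of classes of hyperalgebras rather than any new construction. The key observation is that, by definition, every EHC$_\omega$A is in particular a HC$_\omega$A (the axioms E0--E4 are added on top of the HC$_\omega$A structure, without weakening anything), so $\mathbb{EHC}_\omega \subseteq \mathbb{HC}_\omega$; and by Proposition~\ref{hyper-swap-is-enriched}, for every IHL $\mathsf L$, the hyper swap structure $\mathsf S(\mathsf L)$ belongs to $\mathbb{EHC}_\omega$.

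From these two facts I get the chain of implications
\[
\Gamma \models_{\mathbb{HC}_\omega} \varphi \;\Longrightarrow\; \Gamma \models_{\mathbb{EHC}_\omega} \varphi \;\Longrightarrow\; \Gamma \models_{C_\omega}^{HSW} \varphi.
\]
The first implication holds simply because any valuation into an EHC$_\omega$A is a valuation into a HC$_\omega$A, so preserving designation on every HC$_\omega$A implies preserving it on every EHC$_\omega$A. The second implication holds because, given any IHL $\mathsf L$, the Nmatrix $\mathcal{M}(\mathsf L)$ is precisely $\mathcal M_{\mathsf S(\mathsf L)}$ (by Proposition~\ref{swap-SHCw}(3)), and $\mathsf S(\mathsf L)$ is an EHC$_\omega$A, so preserving designation on all EHC$_\omega$As forces preservation on all $\mathcal M(\mathsf L)$.

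Now I invoke Theorem~\ref{Sound-comple}: both endpoints of the above chain are equivalent to $\Gamma \vdash_{C_\omega} \varphi$. Concretely, if $\Gamma \vdash_{C_\omega} \varphi$, then by $(1)\Rightarrow (2)$ of Theorem~\ref{Sound-comple} we have $\Gamma \models_{\mathbb{HC}_\omega} \varphi$, which by the first implication of the chain gives $\Gamma \models_{\mathbb{EHC}_\omega} \varphi$ (soundness direction). Conversely, if $\Gamma \models_{\mathbb{EHC}_\omega} \varphi$, then by the second implication of the chain we get $\Gamma \models_{C_\omega}^{HSW} \varphi$, and by $(3)\Rightarrow (1)$ of Theorem~\ref{Sound-comple} we conclude $\Gamma \vdash_{C_\omega} \varphi$ (completeness direction). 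There is no real obstacle here: the whole work has already been done, the completeness countermodel produced in the proof of Theorem~\ref{Sound-comple} (a hyper swap structure over a Lindenbaum--\L o\'s IHL $\mathsf L_\Delta$) is automatically in $\mathbb{EHC}_\omega$ by Proposition~\ref{hyper-swap-is-enriched}, so no new countermodel construction is needed.
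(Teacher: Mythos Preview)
Your proof is correct and follows essentially the same approach as the paper's own proof: both sandwich $\models_{\mathbb{EHC}_\omega}$ between $\models_{\mathbb{HC}_\omega}$ and $\models_{C_\omega}^{HSW}$ via the class inclusions (using that every EHC$_\omega$A is a HC$_\omega$A, and Proposition~\ref{hyper-swap-is-enriched} for the other inclusion), and then invoke Theorem~\ref{Sound-comple} to close the equivalence. Your version is slightly more explicit in citing Proposition~\ref{swap-SHCw}(3) and in spelling out why each implication in the chain holds, but there is no substantive difference.
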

\begin{proof}
Suppose first that $\Gamma \vdash_{C_\omega} \varphi$. By   Theorem~\ref{Sound-comple}, $\Gamma \models_{\mathbb{HC}_\omega} \varphi$, therefore $\Gamma \models_{\mathbb{EHC}_\omega} \varphi$, given that any EHC$_\omega$A is in particular a HC$_\omega$A. Conversely, suppose that $\Gamma \models_{\mathbb{EHC}_\omega} \varphi$. In particular, $\Gamma \models_{C_\omega}^{HSW} \varphi$, by Proposition~\ref{hyper-swap-is-enriched}. This implies that $\Gamma \vdash_{C_\omega} \varphi$, by   Theorem~\ref{Sound-comple}.
\end{proof}

\begin{proposition} \label{functor U- Cw}
    Let $\mathsf A$ be an EHC$_\omega$A. Then $\langle\mathsf U(\mathsf A),\preceq_{\mathsf U(\mathsf A)}\rangle$ is an IHL (which will be also denoted by $\mathsf U(\mathsf A)$) with the relation $\preceq_{\mathsf U(\mathsf A)}$ defined by $[x]\preceq_{\mathsf U(\mathsf A)}[y]$ iff $x\preceq y$. Moreover, the assignment $\mathsf A\mapsto \mathsf U(\mathsf A)$ provides a functor $\mathsf U:\textbf{EHC}_\omega\textbf{A}\rightarrow \textbf{IHL}$.
\end{proposition}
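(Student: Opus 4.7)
The plan is to first show that the quotient $\mathsf U(\mathsf A) = A/{\sim}$ inherits an IHL structure from $\mathsf A$, and then lift the construction to morphisms. The key enabling fact is that $\sim$ refines $\equiv$: by axiom \textbf{E1} each set $\div z$ is stable, and by \textbf{E2} the relation $\sim$ is precisely the equivalence relation generated by the ``$\div z$-fibres'', so $x \sim y$ implies $x \equiv y$. From this, well-definedness of $[x] \preceq_{\mathsf U(\mathsf A)} [y]$ iff $x \preceq y$ is immediate, and reflexivity and transitivity are inherited from $\mathsf A$, making $\langle \mathsf U(\mathsf A), \preceq_{\mathsf U(\mathsf A)}\rangle$ a proset.

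To equip $\mathsf U(\mathsf A)$ with m-hyperlattice operations, I would argue that for any $x, y \in A$ the set $\{[w] : w \in x \curlywedge y\}$ coincides with $\mathsf{Max}(\mathsf{Lb}(\{[x], [y]\}))$ in $\mathsf U(\mathsf A)$. Indeed $[w]$ is a lower bound of $\{[x], [y]\}$ in the quotient iff $w$ is a lower bound of $\{x, y\}$ in $\mathsf A$, and the absolute-maximum condition $\mathsf{Lb}(\{[x],[y]\}) \preceq [w]$ translates verbatim into $\mathsf{Lb}(\{x, y\}) \preceq w$, which is precisely $w \in x \curlywedge y$. The symmetric argument handles supremoids. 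These sets are nonempty because $x \curlywedge y, x \curlyvee y \neq \emptyset$ in $\mathsf A$, and they are independent of representatives since, by Proposition~\ref{stable-wedge-02}(1), stability of $\{x, x'\}$ and $\{y, y'\}$ (whenever $x \sim x'$ and $y \sim y'$) forces $x \curlywedge y = x' \curlywedge y'$ as sets in $\mathsf A$.

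For the implicative structure I would appeal to Proposition~\ref{prop:char:IHL}: a parallel unfolding shows $\mathsf R([x],[y]) = \{[z] : z \in \mathsf R(x, y)\}$, and hence $\mathsf{Max}(\mathsf R([x],[y])) = \{[z] : z \in x \multimap y\}$ is nonempty, so $\mathsf U(\mathsf A)$ is an IHL. To define $\mathsf U$ on morphisms, given $f : \mathsf A_1 \to \mathsf A_2$ in \textbf{EHC$_\omega$A} set $\mathsf U(f)([x]) := [f(x)]$; well-definedness uses \textbf{E2} together with $\div$-preservation by $f$ (if $x, x' \in \div z$ then $f(x), f(x') \in \div f(z)$, so $f(x) \sim f(x')$). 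Preservation of $\curlywedge, \curlyvee, \multimap$ by $\mathsf U(f)$ follows from the corresponding preservation by $f$ via the descriptions above, and the functoriality identities $\mathsf U(1_{\mathsf A}) = 1_{\mathsf U(\mathsf A)}$ and $\mathsf U(g \circ f) = \mathsf U(g) \circ \mathsf U(f)$ are immediate.

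The most delicate step, I expect, is the identification of $\{[w] : w \in x \curlywedge y\}$ with $\mathsf{Max}(\mathsf{Lb}(\{[x], [y]\}))$ (and the analogous identification for $\multimap$). This requires keeping careful track of the fact that $\mathsf{Max}$ in this paper denotes elements above all of the ambient set --- not merely maximal elements --- so that the preinfima and preimplications in $\mathsf A$ descend tightly to their counterparts in $\mathsf U(\mathsf A)$, rather than becoming merely one witness among many equivalent classes.
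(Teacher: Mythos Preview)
Your proposal is correct and follows essentially the same approach as the paper's proof: both define the quotient hyperoperations as $\{[z] : z \in x \,\#\, y\}$ for $\# \in \{\curlywedge, \curlyvee, \multimap\}$, verify well-definedness via the implication $x \sim y \Rightarrow x \equiv y$ from \textbf{E1}, and define $\mathsf U(f)$ on classes using $\div$-preservation. Your treatment is in fact more thorough than the paper's, which simply asserts that these sets ``constitute the basis for an IHL structure'' without spelling out the identification with $\mathsf{Max}(\mathsf{Lb}(\{[x],[y]\}))$ and $\mathsf{Max}(\mathsf R([x],[y]))$ that you carefully verify.
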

\begin{proof}
    Note that, since $\div x$ is stable for all $x\in A$, $x\sim x'$ and $y\sim y'$ implies that $x\equiv x'$ and $y\equiv y'$. This implies that $x\preceq y$ iff $x'\preceq y'$, and $\preceq_{\mathsf U(\mathsf A)}$ is well-defined. To obtain the IHL structure for $\mathsf U(\mathsf A)$, just observe that
    \begin{align*}
        [x]\curlywedge_{\mathsf (\mathsf A)}[y]&:=\{[z] \ : \ z\in x\curlywedge y\}\\
        [x]\curlyvee_{\mathsf U(\mathsf A)}[y]&:=\{[z] \ : z\in x\curlyvee y\}\\
        \mathsf R_{\mathsf U(\mathsf A)}([x],[y])&:=\{[z] \ : \ z \in \mathsf R(x,y)\}
    \end{align*}
    are well-defined non-empty sets, which constitute the basis for an IHL structure over  $\langle\mathsf U(\mathsf A),\preceq_{\mathsf U(\mathsf A)}\rangle$. To finish the proof and to obtain a functor, let $f:\mathsf A_1\rightarrow \mathsf A_2$ be an \textbf{EHC$_\omega$A}-morphism and define $\mathsf U(f):\mathsf U(\mathsf A_1)\rightarrow \mathsf U(\mathsf A_2)$ given by the rule $\mathsf U(f)([x])=[f(x)]$. If $x\sim x'$ then $x,x'\in\div w$ for some $w\in A_1$, which implies that $f(x),f(x')\in\div f(w)$ and so $f(x)\sim f(x')$. Hence, $[f(x)]=[f(x')]$ and $\mathsf U(f)$ is well-defined.
    
    After that, the fact that $\mathsf U(1_{\mathsf A_1})=1_{\mathsf U(\mathsf A_1)}$ and $\mathsf U(f\circ g)=\mathsf U(f)\circ \mathsf U(g)$ is a straightforward calculation involving the above definitions.
\end{proof}

Now it is time to come back to the functors. Observe that, since the hyper swap structure $\mathsf S(\mathsf L)$ over an IHL $\mathsf L$ is an EHC$_\omega$A, the Kalman functor $\mathsf S:\textbf{IHL}\rightarrow \textbf{HC}_\omega\textbf{A}$ can be seen as a functor $\mathsf S:\textbf{IHL}\rightarrow \textbf{EHC}_\omega\textbf{A}$. 

\begin{theorem}\label{equivalence-01}
    For all $\mathsf L\in \textbf{IHL}$ there is an isomorphism $\Phi_{\mathsf L}:\mathsf L\rightarrow \mathsf U(\mathsf S(\mathsf L))$. Moreover, this provides a natural isomorphism of functors $\Phi:1_{\textbf{IHL}}\Rightarrow \mathsf U\circ \mathsf S$ given by $\mathsf L\mapsto\Phi_{\mathsf L}$.
\end{theorem}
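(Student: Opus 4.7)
The plan is to define $\Phi_{\mathsf L}: L \to \mathsf U(\mathsf S(\mathsf L))$ by $\Phi_{\mathsf L}(x) := [(x,b)]$, where $b$ is any fixed element of $\top$. Such a $b$ exists since $\top \neq \emptyset$ in any IHL, and $(x,b)$ belongs to $S^{C_\omega}_{\mathsf L}$ by Fact~1 in the proof of Proposition~\ref{swap-SHCw}. The map is independent of the choice of $b$ because, as observed in item~1 of Remark~\ref{enriched-rem}, the relation $\sim$ on $\mathsf S(\mathsf L)$ identifies exactly those snapshots sharing their first coordinate; hence $(x,b) \sim (x,b')$ for any $b,b' \in \top$.

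Bijectivity is then immediate: injectivity follows from the same characterization of $\sim$, while for surjectivity any class $[z] \in \mathsf U(\mathsf S(\mathsf L))$ coincides with $[(z_1,b)] = \Phi_{\mathsf L}(z_1)$. To show that $\Phi_{\mathsf L}$ is an IHL-isomorphism, I would compute each hyperoperation directly. By Definition~\ref{def-hswap-Cw} the hyperoperations $\curlywedge, \curlyvee, \multimap$ in $\mathsf S(\mathsf L)$ constrain only the first coordinate of the output; combined with the quotient operations of Proposition~\ref{functor U- Cw}, this yields the setwise identity
\[
\Phi_{\mathsf L}(x) \# \Phi_{\mathsf L}(y) \;=\; \{\Phi_{\mathsf L}(a) : a \in x \# y\} \quad \text{for } \# \in \{\curlywedge, \curlyvee, \multimap\}.
\]
This equality ensures both $\Phi_{\mathsf L}$ and its inverse $[z] \mapsto z_1$ are hyperalgebra morphisms, so $\Phi_{\mathsf L}$ is an isomorphism in $\textbf{IHL}$.

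For naturality, given an IHL-morphism $f: \mathsf L_1 \to \mathsf L_2$ and $x \in L_1$, both sides of the required square evaluate at $x$ to classes in $\mathsf U(\mathsf S(\mathsf L_2))$: the composite $\mathsf U(\mathsf S(f)) \circ \Phi_{\mathsf L_1}$ produces $[(f(x), f(b_1))]$, while $\Phi_{\mathsf L_2} \circ f$ produces $[(f(x), b_2)]$ for some $b_2 \in \top_2$. Since both snapshots share the first coordinate $f(x)$ in $\mathsf L_2$, they are $\sim$-equivalent in $\mathsf S(\mathsf L_2)$, so the square commutes.

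The main subtlety lies in verifying that the quotient operation $\multimap$ on $\mathsf U(\mathsf A)$ admits the formula $[x] \multimap [y] = \{[z] : z \in x \multimap y\}$ (analogous to $\curlywedge$ and $\curlyvee$), because Proposition~\ref{functor U- Cw} gives $\multimap$ only implicitly, via $\mathsf R_{\mathsf U(\mathsf A)}$ and the characterization in Proposition~\ref{prop:char:IHL}. The required identity follows once one observes that $\preceq_{\mathsf U(\mathsf A)}$ faithfully lifts $\preceq$ from $\mathsf A$, so that maxima of $\mathsf R_{\mathsf U(\mathsf A)}([x],[y])$ are exactly the $\sim$-classes of maxima of $\mathsf R(x,y)$; granting this, the setwise computation in the second paragraph goes through uniformly for all three hyperoperations.
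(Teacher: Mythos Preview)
Your proposal is correct and follows essentially the same route as the paper: define $\Phi_{\mathsf L}(x)$ as the $\sim$-class of a snapshot with first coordinate $x$, use the characterization $z \sim z'$ iff $z_1 = z'_1$ from Remark~\ref{enriched-rem} for well-definedness and bijectivity, then verify the hyperoperations and naturality. Your version is in fact slightly tidier in two respects --- fixing a single $b \in \top$ rather than allowing an arbitrary companion, and explicitly arguing the setwise identity so that the inverse map is visibly a morphism (the paper only checks the forward direction and leaves the isomorphism claim implicit); your caution about extracting the formula $[x] \multimap [y] = \{[z] : z \in x \multimap y\}$ from Proposition~\ref{functor U- Cw} via Proposition~\ref{prop:char:IHL} is also well-placed, and the verification you sketch is exactly what is needed.
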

\begin{proof}
    For all $x\in L$ there exist $y\in L$ such that $x\curlyvee y\equiv\top$ (for example, take $y\in x\multimap x$). Define $\Phi_{\mathsf L}(x):=[(x,y)]$ with $y\in L$ such that $x\curlyvee y\equiv\top$. The function  $\Phi_{\mathsf L}$ is well-defined: indeed, let $y,y'\in L$ such that $x\curlyvee y\equiv\top$ and $x\curlyvee y'\equiv\top$. Then, $(x,y),(x,y')$ are elements of $S^{C_\omega}_{\mathsf L}$ such that $(x,y)\sim(x,y')$ and so $[(x,y)]=[(x,y')]$. This proves that  $\Phi_{\mathsf L}$ is well-defined. 
    
    Now, let $x,x'\in L$ and $d\in x\curlywedge x'$. Also let $d'\in L$ such that $d\curlyvee d'\equiv\top$. Observe that for all $y,y'\in L$ with $x\curlyvee y\equiv\top$ and $x'\curlyvee y'\equiv\top$, we have $(d,d')\in(x,y)\curlywedge(x',y')$ in $\mathsf S(\mathsf L)$, which imply $[(d,d')]\in[(x,y)]\curlywedge[(x',y')]$ (in $\mathsf U(\mathsf S(\mathsf L))$). Then we have that $d\in x\curlywedge x'$ implies that $\Phi_L(d)\in \Phi_L(x)\curlywedge \Phi_L(x')$. Similarly we prove that $d\in x\curlyvee x'$ implies that $\Phi_L(d)\in \Phi_L(x)\curlyvee \Phi_L(x')$ and $d\in x\multimap x'$ implies $\Phi_L(d)\in \Phi_L(x)\multimap \Phi_L(x')$, proving that $\Phi_{\mathsf L}$ is an \textbf{IHL}-morphism. Note that this is a surjective morphism: if $[(x,y)]\in \mathsf U(\mathsf S(\mathsf L))$, then $[(x,y)]=\Phi_{\mathsf L}(x)$.

    Finally, suppose that $\Phi_{\mathsf L}(x)=\Phi_{\mathsf L}(x')$. This means that for some $y,y'\in L$ with $x\curlyvee y\equiv\top$ and $x'\curlyvee y'\equiv\top$ we have $[(x,y)]=[(x',y')]$. Then $(x,y)\sim(x',y')$, implying that $(x,y),(x',y')\in\div z$ for some $z\in S^{C_\omega}_{\mathsf L}$. In particular $x=x'$, showing that $\Phi_{\mathsf L}$ is injective.

    Therefore $\Phi_{\mathsf L}:\mathsf L\rightarrow \mathsf U(\mathsf S(\mathsf L))$ is an isomorphism which is natural in the sense that for an \textbf{IHL}-morphism $f:\mathsf L_1\rightarrow \mathsf L_2$, the following diagram commutes: 
    $$\xymatrix@!=4.5pc{
    \mathsf L_1\ar[r]^{\Phi_{\mathsf L_1}}\ar[d]_{f} & \mathsf U(\mathsf S(\mathsf L_1))\ar[d]^{\mathsf U(\mathsf S(f))} \\
    \mathsf L_2\ar[r]_{\Phi_{\mathsf L_2}} & \mathsf U(\mathsf S(\mathsf L_2))
    }$$
    In fact, for all $x\in L_1$ and all $y\in L_1$ with $x\curlyvee y\equiv\top$ we have
    \begin{align*}
        \mathsf U(\mathsf S(f))(\Phi_{\mathsf L_1}(x))&=\mathsf U(\mathsf S(f))([(x,y)])=[(f(x),f(y))]=\Phi_{\mathsf L_2}(f(x)).
    \end{align*}
    Then $\Phi:\mathsf L\rightarrow\Phi_{\mathsf L}$ is a natural isomorphism that witnesses the isomorphism of functors $$\Phi:1_{\textbf{IHL}}\cong \mathsf U\circ \mathsf S.$$
\end{proof}

\begin{theorem}\label{equivalence-02}
    For all $\mathsf A\in \textbf{EHC}_\omega\textbf{A}$ there is an isomorphism $\Psi_{\mathsf A}:\mathsf A\rightarrow \mathsf S(\mathsf U(\mathsf A))$. Moreover, this provides a natural isomorphism of functors $\Psi:1_{\textbf{EHC}_\omega\textbf{A}}\Rightarrow \mathsf S\circ \mathsf U$ given by $\mathsf A\mapsto\Psi_{\mathsf A}$. 
\end{theorem}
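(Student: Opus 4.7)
The plan is to define, for each EHC$_\omega$A $\mathsf A$, a map $\Psi_{\mathsf A}: A \to S^{C_\omega}_{\mathsf U(\mathsf A)}$ by $\Psi_{\mathsf A}(x) := ([x], [y_x])$, where $y_x$ is any element of $\div x$. The second coordinate is well-defined as a $\sim$-class: if $y, y' \in \div x$ then $y \sim y'$ directly from the definition of $\sim$ (taking $z = x$ in the defining clause), so $[y_x]$ does not depend on the representative. To see that $\Psi_{\mathsf A}(x)$ actually lies in $S^{C_\omega}_{\mathsf U(\mathsf A)}$, I invoke axiom (H1') together with Proposition \ref{stable-wedge-02}: since $\div x$ is stable (E1), $x \curlyvee y_x = x \curlyvee \div x \equiv \top$ in $\mathsf A$, and by the coordinate-wise definition of the preorder on $\mathsf U(\mathsf A)$ the $\sim$-classes of these maxima are maxima of $\mathsf U(\mathsf A)$, so $[x] \curlyvee [y_x] \equiv \top$ in $\mathsf U(\mathsf A)$.

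Next I would show $\Psi_{\mathsf A}$ is an EHC$_\omega$A-morphism. The hyperoperations $\curlywedge, \curlyvee, \multimap$ in $\mathsf S(\mathsf U(\mathsf A))$ constrain only the first coordinate, which matches the quotient-based definition of the corresponding hyperoperations in $\mathsf U(\mathsf A)$ given in Proposition \ref{functor U- Cw}; thus the implications $z \in x \# y \Rightarrow \Psi_{\mathsf A}(z) \in \Psi_{\mathsf A}(x) \# \Psi_{\mathsf A}(y)$ for $\# \in \{\curlywedge, \curlyvee, \multimap\}$ reduce to the immediate $z \in x \# y \Rightarrow [z] \in [x] \# [y]$. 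The case of $\div$ is the subtle one: if $z \in \div x$ then the first coordinate of $\Psi_{\mathsf A}(z)$ is $[z] = [\div x]$, which matches the first coordinate required by $\div \Psi_{\mathsf A}(x)$; for the second coordinate, any $w \in \div z$ satisfies $w \preceq x$ by axiom (H2) applied to $z \in \div x$ and $w \in \div z$, so $[\div z] \preceq [x]$ in $\mathsf U(\mathsf A)$, confirming $\Psi_{\mathsf A}(z) \in \div \Psi_{\mathsf A}(x)$.

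Bijectivity is where axioms E3 and E4 are put to work. For surjectivity, an element $([x], [y]) \in S^{C_\omega}_{\mathsf U(\mathsf A)}$ satisfies $[x] \curlyvee [y] \equiv \top$ in $\mathsf U(\mathsf A)$, which, by reversing the maxima-detection argument of the first paragraph (a maximum $[u]$ of $\mathsf U(\mathsf A)$ comes from a maximum $u$ of $\mathsf A$, directly from the definition of $\preceq_{\mathsf U(\mathsf A)}$), lifts to $x \curlyvee y \equiv \top$ in $\mathsf A$; axiom E3 then yields $z \in A$ with $x \sim z$ and $y \sim \div z$, so $\Psi_{\mathsf A}(z) = ([z], [\div z]) = ([x], [y])$. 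For injectivity, $\Psi_{\mathsf A}(x) = \Psi_{\mathsf A}(x')$ gives $x \sim x'$ and $[\div x] = [\div x']$; combining this with the footnote convention and the stability of both $\div$-sets, one obtains $\div x \sim \div x'$ set-wise, which is exactly the hypothesis of axiom E4 and yields $x = x'$. To upgrade the bijective morphism $\Psi_{\mathsf A}$ to an isomorphism in $\textbf{EHC}_\omega\textbf{A}$ one verifies the reverse implications for $\Psi_{\mathsf A}^{-1}$, which is routine because the hyperoperations of $\mathsf S(\mathsf U(\mathsf A))$ are determined by first coordinates alone.

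Finally, for naturality, for an EHC$_\omega$A-morphism $f: \mathsf A_1 \to \mathsf A_2$ both composites $\Psi_{\mathsf A_2} \circ f$ and $\mathsf S(\mathsf U(f)) \circ \Psi_{\mathsf A_1}$ send $x$ to $([f(x)], [f(y_x)])$, where the equality of second coordinates uses that $f$ preserves $\div$ (so $f(y_x) \in \div f(x)$ is an admissible choice of representative) together with the independence of the choice established at the outset. The main obstacle I expect is not computational but bookkeeping: three notions of equivalence coexist throughout the argument — the $\sim$-class, the preorder similarity $\equiv$, and the set-wise $\sim$ of the footnote — and one must invoke the correct axiom (E0--E4 on the enrichment side versus (H1')--(H2') on the base HC$_\omega$A side) at each juncture. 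The crux is the surjectivity step, which is the only place where E3 enters; it is precisely the enrichment axioms that make the Kalman-style inversion possible and isolate the hyper swap structures as the representable objects inside HC$_\omega$A.
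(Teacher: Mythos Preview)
Your proposal is correct and follows essentially the same approach as the paper: the same definition $\Psi_{\mathsf A}(x)=([x],[\div x])$, the same well-definedness argument via the definition of $\sim$ and (H1'), the same morphism verification (first coordinates for $\curlywedge,\curlyvee,\multimap$; (H2') for $\div$), the same use of E3 for surjectivity and E4 for injectivity, and the same naturality square. The only point where you go slightly beyond the paper is in flagging that a bijective hyperalgebra morphism is not automatically an isomorphism and that the reverse implications for $\Psi_{\mathsf A}^{-1}$ must be checked; the paper is silent on this, so your remark is a welcome clarification rather than a departure in method.
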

\begin{proof}
    Given $x \in A$ , if $z\in\div x$ and $z' \in \div x$ then, by definition, $z \sim z'$  and so $[z]=[z']$. Moreover, $x \curlyvee z \equiv \top$ and so $[x] \curlyvee_{\mathsf U(\mathsf A)} [z] \equiv \top_{\mathsf U(\mathsf A)}$, showing that $([x],[z]) \in S^{C_\omega}_{\mathsf U(\mathsf A)}$, the domain of $\mathsf S(\mathsf U(\mathsf A))$.
    
    With these considerations, we have a well-defined function $\Psi_{\mathsf A}:A\rightarrow S^{C_\omega}_{\mathsf U(\mathsf A)}$ given, for $x\in A$, by the rule $\Psi_{\mathsf A}(x):=([x],[z])$, where $z\in\div x$ is arbitrary.
    
    To prove that $\Psi_{\mathsf A}$ is an \textbf{EHC$_\omega$A}-morphism, let $x,x'\in A$ and $d\in x\curlywedge x'$. Also let $d'\in A$ such that $d\curlyvee d'\equiv\top$. Observe that for all $y,y''\in A$ with $x\curlyvee y\equiv\top$ and $x'\curlyvee y'\equiv\top$, we have $([d],[d'])\in([x],[y])\curlywedge([x'],[y''])$ in $\mathsf S(\mathsf U(\mathsf A))$, which means that $\Psi_{\mathsf A}(d)\in \Psi_{\mathsf A}(x)\curlywedge \Psi_{\mathsf A}(x')$. Similarly we prove that $d\in x\curlyvee x'$ implies that $\Psi_{\mathsf A}(d)\in \Psi_{\mathsf A}(x)\curlyvee \Psi_{\mathsf A}(x')$ and $d\in x\multimap x'$ implies $\Psi_{\mathsf A}(d)\in \Psi_{\mathsf A}(x)\multimap \Psi_{\mathsf A}(x')$, proving that $\Psi_{\mathsf A}$ is an \textbf{IHL}-morphism. The next step is to prove that $\Psi_{\mathsf A}(\div x)\subseteq\div\Psi_{\mathsf A}(x)$. Observe first that 

    $\begin{array}{lll}
         \div\Psi_{\mathsf A}(x)&=&\div ([x],[z]) = \big\{([z],[y])\in S^{C_\omega}_{\mathsf U(\mathsf A)} \ : \  [y]\preceq_{\mathsf U(\mathsf A)}[x]\big\}\\[1mm]
         &=& \big\{([z],[y])\in S^{C_\omega}_{\mathsf U(\mathsf A)} \ : \  y\preceq x\big\} \mbox{, for any $z\in\div x$}.    
    \end{array}$
    
    Now, let $z\in\div x$. Since $\div z\subseteq\div\div x$ and $\div\div x\preceq x$, we get $\div z\preceq x$. Then for every $y\in\div z$ it holds that $y \preceq x$ and so
    $$\Psi_{\mathsf A}(z)=([z],[y])\in\div\Psi_{\mathsf A}(x),$$
proving that $\Psi_{\mathsf A}:\mathsf A\rightarrow \mathsf S(\mathsf U(\mathsf A))$ is an \textbf{EHC$_\omega$A}-morphism.

    If $\Psi_{\mathsf A}(x)=\Psi_{\mathsf A}(y)$ then $([x],[x'])=([y],[y'])$ for all $x'\in\div x$ and $y'\in\div y$, which means $x\sim y$ and $\div x\sim \div y$. By Axiom E4 we have $x=y$ and so $\Psi_{\mathsf A}$ is injective. Also, if $([x],[y])\in S^{C_\omega}_{\mathsf U(\mathsf A)}$ then $[x]\curlyvee_{\mathsf U(\mathsf A)} [y] \equiv \top_{\mathsf U(\mathsf A)}$ which implies that $x\curlyvee y\equiv\top$. By Axiom E3 there exist $w\in A$ such that $x\sim w$ and $y\sim \div w$. Therefore, given $w'\in\div w$ we get $\Psi_{\mathsf A}(w)=([w],[w'])=([x],[y])$ and so $\Psi_{\mathsf A}$ is surjective.
    
    Therefore $\Psi_{\mathsf A}:\mathsf A\rightarrow \mathsf S(\mathsf U(\mathsf A))$ is an isomorphism which is natural in the sense that for an \textbf{EHC$_\omega$A}-morphism $g:\mathsf A_1\rightarrow \mathsf A_2$, the following diagram commutes: 
    $$\xymatrix@!=4.5pc{
    \mathsf A_1\ar[r]^{\Psi_{\mathsf A_1}}\ar[d]_{g} & \mathsf S(\mathsf U(\mathsf A_1))\ar[d]^{\mathsf S(\mathsf U(g))} \\
    \mathsf A_2\ar[r]_{\Psi_{\mathsf A_2}} & \mathsf S(\mathsf U(\mathsf A_2))
    }$$
    In fact, for all $x\in A_1$ and $y\in\div x$ we have
    \begin{align*}
        \mathsf S(\mathsf U(g))(\Psi_{\mathsf A_1}(x))&=\mathsf S(\mathsf U(g))([x],[y])=(\mathsf U(g)([x]),\mathsf U(g)([y]))=([g(x)],[g(y)])=\Psi_{A_2}(g(x)).
    \end{align*}
    Then $\Psi:\mathsf A\rightarrow\Psi_{\mathsf A}$ is a natural isomorphism that witnessess the isomorphism of functors $$\Phi:1_{\textbf{EHC}_\omega\textbf{A}}\cong \mathsf S\circ \mathsf U.$$
\end{proof}

Combining Theorems~\ref{equivalence-01} and~\ref{equivalence-02} we arrive at our main result:

\begin{theorem}\label{equivalence-03}
    The functors $\mathsf S:\textbf{IHL}\rightarrow \textbf{EHC}_\omega\textbf{A}$ and $\mathsf U:\textbf{EHC}_\omega\textbf{A}\rightarrow \textbf{IHL}$ establish an equivalence of categories.
\end{theorem}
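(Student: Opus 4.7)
The plan is to invoke the standard criterion characterizing an equivalence of categories: two functors $F:\mathcal{C}\to\mathcal{D}$ and $G:\mathcal{D}\to\mathcal{C}$ form an equivalence precisely when there exist natural isomorphisms $1_{\mathcal{C}} \cong G\circ F$ and $1_{\mathcal{D}} \cong F\circ G$. Consequently, the whole task reduces to exhibiting both natural isomorphisms in the two directions for our specific pair $(\mathsf S, \mathsf U)$.

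First I would appeal to Theorem~\ref{equivalence-01}, which already delivers the natural isomorphism $\Phi: 1_{\textbf{IHL}} \Rightarrow \mathsf U \circ \mathsf S$, with components $\Phi_{\mathsf L}(x) = [(x,y)]$ for any $y$ satisfying $x\curlyvee y \equiv \top$. Second, I would invoke Theorem~\ref{equivalence-02}, which supplies the companion natural isomorphism $\Psi: 1_{\textbf{EHC}_\omega\textbf{A}} \Rightarrow \mathsf S \circ \mathsf U$, with components $\Psi_{\mathsf A}(x) = ([x],[z])$ for any $z\in\div x$. Combining these two natural isomorphisms directly yields the desired equivalence.

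There is essentially no new computation required: the present statement is the clean categorical packaging of the two preceding theorems. The real conceptual obstacle, already resolved in the earlier sections, was identifying the correct enriched subcategory on the right-hand side so that the inverse construction $\mathsf U$ would recover an IHL from an object on the hyperalgebraic side. Axioms \textbf{E0}--\textbf{E4} of Definition~\ref{def:EHCw} were tailored precisely so that $\Psi_{\mathsf A}$ is bijective: \textbf{E0}--\textbf{E2} produce the well-defined quotient relation $\sim$ used in $\mathsf U(\mathsf A)$, \textbf{E3} provides surjectivity of $\Psi_{\mathsf A}$ (every snapshot $([x],[y])$ in $\mathsf S(\mathsf U(\mathsf A))$ is hit by some $w\in A$), and \textbf{E4} provides injectivity (if $\Psi_{\mathsf A}(x)=\Psi_{\mathsf A}(y)$ then $x\sim y$ and $\div x\sim \div y$, forcing $x=y$). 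Without restricting to this enriched subcategory, $\mathsf S\circ\mathsf U$ would fail to reconstruct $\mathsf A$ up to isomorphism, and the equivalence would break down. With both natural isomorphisms already in hand, the proof of Theorem~\ref{equivalence-03} is then just a one-line citation of Theorems~\ref{equivalence-01} and~\ref{equivalence-02}.
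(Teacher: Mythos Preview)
Your proposal is correct and matches the paper's approach exactly: the paper states Theorem~\ref{equivalence-03} immediately after the sentence ``Combining Theorems~\ref{equivalence-01} and~\ref{equivalence-02} we arrive at our main result,'' with no further proof, so it is indeed just a one-line citation of the two preceding natural isomorphisms. Your additional commentary on the roles of axioms \textbf{E0}--\textbf{E4} is accurate and helpful context, but goes beyond what the paper itself records for this particular theorem.
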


The latter result shows that any  enriched hyper $C_\omega$ algebra has a representation as a swap structure over a Sette implicative hyperlattice.

\section{Extending the results to axiomatic extensions of  $C_\omega$} \label{aect:axiom-ext}

In this section, we extend our previous results on $C_\omega$ to certain axiomatic extensions of this logic. We will only consider two simple but interesting cases: the logics $C_{min}$  and $C_\omega^+$. Both logics are defined over the signature $\Sigma_\omega$ of $C_\omega$.

\subsection{The logic $C_{min}$}

The logic $C_{min}$ was introduced by Carnielli and Marcos in~\cite{car:mar:99} as a way to keep  $C_\omega$ closer to the limit of the hierarchy of da Costa systems $C_n$, for $1 \leq n < \omega$. In order to do this, the logic $C_{min}$ is defined as the axiomatic extension of $C_\omega$ by adding the Peirce/Dummett law $(PL): \ \varphi \vee (\varphi \to \psi)$. In this way, the positive basis of  $C_{min}$ coincides with positive classical logic, in contrast to  $C_\omega$ which is based on positive intuitionistic logic.

With this move,  $C_{min}$ is semantically characterized in terms of bivaluations $b:For(\Sigma_\omega) \to \{0,1\}$ satisfying the standard clauses for valuations for  classical logic  (i.e., the standard truth-tables) for the binary connectives, and two clauses for negation, which reflect the validity of axioms (EM) and (cf), namely
$$\mbox{\bf (val 1)} \ \ b(\varphi)=0 \ \mbox{ implies that } b(\neg\varphi)=1; \ \ \mbox{ and} \ \ \mbox{\bf (val 2)} \ \ b(\neg\neg\varphi)=1 \ \mbox{ implies that } b(\varphi)=1$$
for every formula $\varphi$. In this way, $C_{min}$ is both a `syntactic limit' of the other calculi $C_n$ (by retaining all shared axioms, including (PL), a theorem provable in all of them) and a `semantic limit', in the sense that $C_{min}$ is semantically characterized by bivaluations satisfying {\em exactly} the five clauses common to all bivaluations for the other calculi $C_n$ (see~\cite{dac:alv:77,lop:alv:80}). However, while all other calculi $C_n$ are finitely trivializable, $C_{min}$ is not, meaning it does not yet constitute the deductive limit of this family of paraconsistent systems.\footnote{The deductive limit of the hierarchy $C_n$, for $1 \leq n < \omega$, is a non-finitary logic called $C_{Lim}$, see~\cite{car:mar:99} for details.}

It is easy to adapt our semantical framework to  deal with $C_{min}$. The first observation to be made is that, since $C_{min}$ is based on positive classical logic, the underlying algebraic structures are now {\em classical implicative lattices}.\footnote{This name was taken from Curry, see his book~\cite{C77}.} A classical implicative lattice is simply an implicative lattice \La\ such that $a \vee (a \to b)=1$ for every $a,b \in \La$ (or, equivalently, $((a \to b) \to a) \to a=1$ for any $a,b \in \La$). This variety of algebras characterizes precisely positive classical logic. The generalization to hyperalgebras is straightforward:

\begin{definition} [Classical implicative hyperlattices] \label{def:c-ilattices} A {\em classical implicative hyperlattice} (or a {\em CIHL}) is an IHL $\mathsf L=\langle L,\curlywedge,\curlyvee,\multimap  \rangle$ such that, for every $x,y,z,w \in L$:
\begin{description}
    \item[(I4)] $z \in x \multimap y$ and $w \in x \curlyvee z$ implies that $w \in \top$.
\end{description}
\end{definition}

\noindent Clearly, (I4) is equivalent to the following condition:
\begin{description}
    \item[(I4)'] $x \curlyvee (x \multimap y) \equiv \top$,  for every $x,y\in L$.
\end{description}

\begin{definition} [Hyperalgebras  for $C_{min}$] \label{def:HCmin}
A {\em Hyperalgebra for  $C_{min}$} (or {\em hyper  $C_{min}$ algebra}, or simply a HC$_{min}$A) is a hyper $C_\omega$ algebra $\mathsf H=\langle H,\curlywedge,\curlyvee,\multimap, \div  \rangle$  such that the reduct~\mbox{$\langle H,\curlywedge,\curlyvee,\multimap\rangle$}  is a CIHL.
\end{definition}

\noindent
The consequence relation with respect to  HC$_{min}$As, defined as in Definition~\ref{def-sem-SHCw} by restricting to HC$_{min}$As, will be denoted by $\models_{\mathbb{HC}_{min}}$.

As expected, the swap structures for $C_{min}$ are the ones for $C_\omega$ which are induced by classical implicative lattices, while its hyper swap structures are the corresponding ones induced by classical implicative hyperlattices. In formal terms:

\begin{definition} [Swap structures for $C_{min}$]  Let $\mathsf L=\langle L, \wedge,\vee,\to\rangle$ be a classical implicative lattice.  The swap structure for $C_{min}$ over $\mathsf L$ is  $\Sw_0(\mathsf L)$, the swap structure for $C_\omega$ over $\mathsf L$ as introduced in Definition~\ref{def-swap-Cw}.
\end{definition}

\noindent As in the case of $C_\omega$, the Nmatrix associated to  $\Sw_0(\mathsf L)$ is $\mathcal{M}_0(\mathsf L)=\langle \Sw_0(\mathsf L),D_{\mathsf L} \rangle$, and the consequence relation generated by the class of Nmatrices of the form $\mathcal{M}_0(\mathsf L)$, for \La\ a classical implicative lattice, will be denoted by $\models_{C_{min}}^{SW}$. Then, the following result easily follows from Theorem~\ref{Sound-comple0} and the definitions above:

\begin{theorem} [Soundness and completeness of  $C_{min}$ w.r.t. hyperstructures semantics, version~1] \label{Sound-complemin0} \ \\
Let $\Gamma \cup \{\varphi\}$ be a set of formulas over $\Sigma_\omega$. The following assertions are equivalent:
\begin{enumerate}
    \item $\Gamma \vdash_{C_{min}} \varphi$;
    \item $\Gamma \models_{\mathbb{HC}_{min}} \varphi$;
    \item $\Gamma \models_{C_{min}}^{SW} \varphi$.
\end{enumerate}
\end{theorem}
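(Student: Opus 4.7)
The overall plan is to reduce Theorem~\ref{Sound-complemin0} to Theorem~\ref{Sound-comple0} via the observation that $C_{min}$ is precisely the extension of $C_\omega$ by the Peirce/Dummett axiom $(PL)$, and that $(PL)$ corresponds exactly to the defining condition $(I4)'$ of classical implicative hyperlattices. Since HC$_{min}$As are HC$_\omega$As whose underlying hyperlattice is classical, and swap structures over classical implicative lattices are swap structures over implicative lattices whose underlying algebra is classical, each implication of the theorem should emerge from the corresponding implication in Theorem~\ref{Sound-comple0} by just verifying that the extra axiom, or the extra (hyper)algebraic condition, is respected.

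For $(1) \Rightarrow (2)$, I would first invoke the soundness of $C_\omega$ w.r.t. HC$_\omega$As to dispose of all axioms and the rule of $C_\omega$, and then separately verify that $(PL)$ is validated in every HC$_{min}$A. Given a valuation $v$ in a HC$_{min}$A $\mathsf H$, we have $v(\alpha \vee (\alpha \to \beta)) \in v(\alpha) \curlyvee (v(\alpha) \multimap v(\beta))$; by $(I4)'$ this latter set is $\equiv \top$, so the image is designated. For $(2) \Rightarrow (3)$, I would check that $\Sw_0(\mathsf L)$ is a HC$_{min}$A for any classical implicative lattice $\mathsf L$: being a HC$_\omega$A is the content of~\cite[Theorem~1]{con:gol:rob:2025}, and $(I4)'$ transfers coordinatewise since $z_1 \vee (z_1 \to w_1) = 1$ in $\mathsf L$ forces the first coordinate of every element of $z \curlyvee (z \multimap w)$ to be $1$, hence $z \curlyvee (z \multimap w) \equiv \top$ in $\Sw_0(\mathsf L)$.

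For the completeness direction $(3) \Rightarrow (1)$, I would mimic the Lindenbaum construction in the proof of Theorem~\ref{Sound-comple}, but in the classical (non-hyper) setting. Assuming $\Gamma \not\vdash_{C_{min}} \varphi$, Lindenbaum–\L o\'s provides a $\varphi$-saturated set $\Delta$ extending $\Gamma$ in $C_{min}$. Because $C_{min}$ contains positive \emph{classical} logic, the quotient $\mathsf{L}_\Delta$ under the congruence $\alpha \equiv_\Delta \beta$ iff $\Delta \vdash_{C_{min}} \alpha \leftrightarrow \beta$ is a genuine classical implicative lattice (not merely a hyperlattice, as in the $C_\omega$ case); validity of $(PL)$ in $C_{min}$ is exactly what makes $[\alpha]_\Delta \vee ([\alpha]_\Delta \to [\beta]_\Delta) = 1$ for all $\alpha,\beta$. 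Then I would form $\Sw_0(\mathsf{L}_\Delta)$ and define the canonical valuation $v_\Delta(\alpha) := ([\alpha]_\Delta, [\neg\alpha]_\Delta)$; axioms $(EM)$ and $(cf)$ respectively ensure that $v_\Delta$ lands in $S_{\mathsf{L}_\Delta}$ and that the clause for $\breve{\neg}$ is satisfied, exactly as in the proof of Theorem~\ref{Sound-comple}. Since $v_\Delta(\alpha) \in D_{\mathsf{L}_\Delta}$ iff $\Delta \vdash_{C_{min}} \alpha$, we obtain $\Gamma \not\models_{C_{min}}^{SW} \varphi$.

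The main obstacle I anticipate is more bookkeeping than conceptual: one must be sure that passing from the intuitionistic to the classical setting in both the soundness check and the Lindenbaum construction only strengthens, and does not interfere with, the arguments already used for $C_\omega$. In particular, the verification that $(PL)$ on the syntactic side and $(I4)'$ on the hyperalgebraic side really are two faces of the same condition is the step that ties everything together; once this correspondence is made explicit, the rest is essentially a translation of the proof of Theorem~\ref{Sound-comple0} in which ``implicative (hyper)lattice'' is systematically replaced by its classical variant.
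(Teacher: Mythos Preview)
Your proposal is correct and follows exactly the approach the paper indicates: the result is stated to follow easily from Theorem~\ref{Sound-comple0} and the definitions, and your plan simply spells out those details---soundness of $(PL)$ via $(I4)'$, the fact that $\Sw_0(\mathsf L)$ is a HC$_{min}$A when $\mathsf L$ is classical, and the Lindenbaum construction yielding a classical implicative lattice. One minor remark: your parenthetical ``not merely a hyperlattice, as in the $C_\omega$ case'' is slightly off, since even for $C_\omega$ the quotient Lindenbaum algebra (as used for Theorem~\ref{Sound-comple0} in~\cite{con:gol:rob:2025}) is already a genuine implicative lattice; the hyperlattice only appears in the proof of Theorem~\ref{Sound-comple}, where one deliberately works with the preorder rather than passing to the quotient.
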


\noindent The adaptation of our results to hyper swap structures for $C_{min}$ (and so, the definition of an equivalence of categories) is also straightforward.

\begin{definition} [Hyper Swap structures for $C_{min}$] Let $\mathsf L=\langle L, \curlywedge,\curlyvee,\multimap\rangle$ be a CIHL.
The hyper  swap structure for $C_{min}$ over $\mathsf L$ is  $\mathsf S(\mathsf L)$, the hyper swap structure for $C_\omega$ over $\mathsf L$ as introduced in Definition~\ref{def-sem-swap-Cw}.
\end{definition}

\noindent By restricting the consequence relation $\models_{C_\omega}^{HSW}$ induced by hyper swap structures for  $C_\omega$ (recall Definition~\ref{def-sem-swap-Cw}) to hyper swap structures for $C_{min}$, we obtain a consequence relation which will be denoted by $\models_{C_{min}}^{HSW}$. Then, from Theorem~\ref{Sound-comple} and the definitions above, we get the following:

\begin{theorem} [Soundness and completeness of  $C_{min}$ w.r.t. hyperstructures semantics, version~2] \label{Sound-comple2} \ \\
Let $\Gamma \cup \{\varphi\}$ be a set of formulas over $\Sigma_\omega$. The following assertions are equivalent:
\begin{enumerate}
    \item $\Gamma \vdash_{C_{min}} \varphi$;
    \item $\Gamma \models_{\mathbb{HC}_{min}} \varphi$;
    \item $\Gamma \models_{C_{min}}^{HSW} \varphi$.
\end{enumerate}
\end{theorem}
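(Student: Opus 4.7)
The plan is to mirror the three-step cycle used for Theorem~\ref{Sound-comple}, inserting at each step the single new ingredient required by the additional axiom (PL): namely, that (PL) at the syntactic level corresponds to condition (I4) on the underlying implicative hyperlattice. Because $C_{min}$ is obtained from $C_\omega$ by adding (PL), and HC$_{min}$As are precisely those HC$_\omega$As whose implicative-hyperlattice reduct is classical, each implication becomes the $C_\omega$ argument plus one straightforward verification.

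For $(1)\Rightarrow(2)$ (soundness), I would invoke the soundness half of Theorem~\ref{Sound-comple} to handle axioms (AX1)--(AX8), (EM), (cf) and (MP); only (PL) remains to check. For any valuation $v$ into a HC$_{min}$A $\mathsf H$, definition gives $v(\alpha\to\beta)\in v(\alpha)\multimap v(\beta)$ and $v(\alpha\vee(\alpha\to\beta))\in v(\alpha)\curlyvee v(\alpha\to\beta)$, so (I4) applied to the CIHL reduct forces $v(\alpha\vee(\alpha\to\beta))\in\top$. For $(2)\Rightarrow(3)$ I need that $\mathsf S(\mathsf L)$ is a HC$_{min}$A whenever $\mathsf L$ is a CIHL. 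Proposition~\ref{swap-SHCw} already gives HC$_\omega$A-hood, so the only remaining step is to check (I4) on $\mathsf S(\mathsf L)$: if $u\in z\multimap w$ and $v\in z\curlyvee u$, then $u_1\in z_1\multimap w_1$ and $v_1\in z_1\curlyvee u_1$, and (I4) in $\mathsf L$ yields $v_1\in\top$ in $\mathsf L$; by Proposition~\ref{swap-SHCw}(1), this means $v\in\top$ in $\mathsf S(\mathsf L)$, as required.

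For $(3)\Rightarrow(1)$ (completeness), I would reuse verbatim the Lindenbaum construction from the proof of Theorem~\ref{Sound-comple}: pick a $\varphi$-saturated set $\Delta$ in $C_{min}$ containing $\Gamma$, order $For(\Sigma_\omega)$ by $\alpha\preceq_\Delta\beta$ iff $\Delta\vdash_{C_{min}}\alpha\to\beta$, obtain the IHL $\mathsf L_\Delta$ with $\alpha\multimap\beta=\{\gamma:\Delta\vdash_{C_{min}}\gamma\leftrightarrow(\alpha\to\beta)\}$ and define the canonical valuation $v_\Delta(\alpha)=(\alpha,\neg\alpha)$. The single new observation is that, since (PL) is provable in $C_{min}$, we have $\Delta\vdash_{C_{min}}\alpha\vee(\alpha\to\beta)$ for all $\alpha,\beta$, i.e.\ $\alpha\curlyvee(\alpha\multimap\beta)\equiv\top$ in $\mathsf L_\Delta$, so (I4)' holds and $\mathsf L_\Delta$ is a CIHL. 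Consequently $\mathsf S(\mathsf L_\Delta)$ is a hyper swap structure for $C_{min}$, and the same verification as in Theorem~\ref{Sound-comple} shows that $v_\Delta$ is a valuation on $\mathcal M(\mathsf L_\Delta)$ sending every formula of $\Gamma$ into $D^{C_\omega}_{\mathsf L_\Delta}$ while leaving $\varphi$ outside, witnessing $\Gamma\not\models_{C_{min}}^{HSW}\varphi$.

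There is no genuine technical obstacle: the only real work lies in checking the correspondence between (PL) and (I4) at each of the three steps. The step that requires the most care is $(3)\Rightarrow(1)$, where one must make sure the Lindenbaum IHL $\mathsf L_\Delta$ inherits the classical condition from the added axiom; but as noted, this is immediate from the provability of (PL) in $C_{min}$ and the definition of $\preceq_\Delta$.
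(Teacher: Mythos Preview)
Your proposal is correct and is exactly the argument the paper has in mind: the paper simply states that the result follows ``from Theorem~\ref{Sound-comple} and the definitions above,'' and what you have written is a faithful unpacking of that remark, checking at each of the three implications that the single extra ingredient (PL)/(I4) behaves as required. The only cosmetic point is that in $(2)\Rightarrow(3)$ you might cite Proposition~\ref{swap-SHCw}(2) (i.e.\ $D^{C_\omega}_{\mathsf L}=\top$) rather than item~(1) when concluding $v\in\top$ from $v_1\in\top$.
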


\begin{definition}[Enriched Hyper C$_{min}$ Algebras] 
    Let $\mathsf A=\langle A,\curlywedge,\curlyvee,\multimap,\div\rangle$ be a HC$_{min}$A. We say that $\mathsf A$ is an {\em enriched hyper C$_{min}$ algebra (EHC$_{min}$A)} if it an EHC$_\omega$A, i.e., it satisfies axioms E0-E4 from Definition~\ref{def:EHCw}.
\end{definition}


    \noindent The category \textbf{EHC$_{min}$A} is the full subcategory of  \textbf{EHC$_\omega$A} where the objects are EHC$_{min}$As. In turn, \textbf{CIHL} is the full subcategory of \textbf{IHL} whose objects are classical implicative hyperlattices. From Theorem~\ref{equivalence-03} and by the definitions above, the proof of the following result is immediate:

\begin{theorem}
    The functors $\mathsf S:\textbf{IHL}\rightarrow \textbf{EHC}_\omega\textbf{A}$ and $\mathsf U:\textbf{EHC}_\omega\textbf{A}\rightarrow \textbf{IHL}$ can be restricted to functors  $\bar{\mathsf S}:\textbf{CIHL}\rightarrow \textbf{EHC}_{min}\textbf{A}$ and $\bar{\mathsf U}:\textbf{EHC}_{min}\textbf{A}\rightarrow \textbf{CIHL}$, which establish an equivalence of categories.
\end{theorem}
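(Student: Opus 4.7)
The plan is to reduce everything to Theorem~\ref{equivalence-03} by checking that the functors $\mathsf{S}$ and $\mathsf{U}$ restrict properly to the subcategories in question. Since $\textbf{CIHL}$ and $\textbf{EHC}_{min}\textbf{A}$ are full subcategories of $\textbf{IHL}$ and $\textbf{EHC}_\omega\textbf{A}$ respectively, morphism behavior is automatic, and it suffices to verify the two object-level claims: (a)~if $\mathsf{L}$ is a CIHL, then $\mathsf{S}(\mathsf{L})$ is an EHC$_{min}$A; (b)~if $\mathsf{A}$ is an EHC$_{min}$A, then $\mathsf{U}(\mathsf{A})$ is a CIHL. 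Once (a) and (b) are established, the natural isomorphisms $\Phi$ and $\Psi$ from Theorems~\ref{equivalence-01} and~\ref{equivalence-02}, which are defined object-wise without any reference to (I4)', restrict tautologically and witness the desired equivalence of categories.

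For (a), Proposition~\ref{hyper-swap-is-enriched} already guarantees that $\mathsf{S}(\mathsf{L})$ is an EHC$_\omega$A, so it remains only to check that its IHL reduct satisfies the CIHL axiom (I4)'. Since the hyperoperations $\curlyvee$ and $\multimap$ in $\mathsf{S}(\mathsf{L})$ are determined coordinate-wise by $z_1 \curlyvee w_1$ and $z_1 \multimap w_1$ respectively, and since by Proposition~\ref{swap-SHCw}(1) the preorder on $\mathsf{S}(\mathsf{L})$ agrees with the first-coordinate preorder of $\mathsf{L}$ (so that in particular $u \equiv \top$ in $\mathsf{S}(\mathsf{L})$ iff $u_1 \equiv \top$ in $\mathsf{L}$), the statement $z \curlyvee (z \multimap w) \equiv \top$ in $\mathsf{S}(\mathsf{L})$ reduces at once to $z_1 \curlyvee (z_1 \multimap w_1) \equiv \top$ in $\mathsf{L}$, which holds by hypothesis.

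For (b), Proposition~\ref{functor U- Cw} ensures that $\mathsf{U}(\mathsf{A})$ is an IHL whose hyperoperations are inherited from $\mathsf{A}$ via the quotient by $\sim$. Since the HC$_\omega$A-reduct of an EHC$_{min}$A is, by definition, a HC$_{min}$A---whose IHL reduct is itself a CIHL---we have $x \curlyvee (x \multimap y) \equiv \top$ in $\mathsf{A}$ for every $x, y \in A$. Passing to $\sim$-classes, and using that the preorder on $\mathsf{U}(\mathsf{A})$ is defined by $[x]\preceq_{\mathsf U(\mathsf A)}[y]$ iff $x \preceq y$, this yields $[x] \curlyvee_{\mathsf U(\mathsf A)} ([x] \multimap_{\mathsf U(\mathsf A)} [y]) \equiv \top_{\mathsf U(\mathsf A)}$, establishing (I4)' for $\mathsf{U}(\mathsf{A})$. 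No substantive obstacle is anticipated: the whole argument is a bookkeeping check that the CIHL condition (I4)' transfers cleanly along the coordinate projection in (a) and along the $\sim$-quotient in (b), corroborating the remark in the excerpt that the proof is ``immediate.''
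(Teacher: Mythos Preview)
Your proposal is correct and aligns with the paper's approach: the paper simply states that the result is immediate from Theorem~\ref{equivalence-03} and the definitions, and your proof supplies exactly the routine verifications (that $\mathsf S$ and $\mathsf U$ restrict on objects, with the natural isomorphisms $\Phi$ and $\Psi$ restricting automatically) needed to unpack that claim.
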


\subsection{The logic $C_\omega^+$}

One of the most interesting features of the logic $C_\omega$ is that, while it is based on positive intuitionistic logic, it is paraconsistent with respect to its primitive negation. The basic properties of such paraconsistent negation $\neg$ are two principles enjoyed by classical negation: excluded middle, (EM), and double negation elimination, (cf). 

Observe that (cf) forces the negation to be  non-deterministic in the swap structures semantics for $C_\omega$. Indeed, given $z=(z_1,z_2)$ in $S_{\mathsf L}$ representing values assigned to $(\varphi,\neg\varphi)$ in $\La\times \La$ then, by applying $\breve{\neg}$ to $z$, it is obtained the set $\{u\in S_{\mathsf L} \ : \ u_1=z_2 \mbox{ and }  u_2 \leq z_1 \}$, according to Definition~\ref{def-swap-Cw}. While the first coordinate of the elements of $\breve{\neg} z$ is the same (namely, $z_2$), which rightly `reads' the current value of $\neg\varphi$ in \La, the second coordinate, which represents the values in \La\ to be assigned to $\neg\neg \varphi$, is ambiguous in the following sense: any value for $\neg\neg\varphi$ less or equal than the original value assigned in \La\ to $\varphi$ (namely, $z_1$) is acceptable. This is a direct consequence of axiom (cf), and fully justifies the condition `$u_2 \leq z_1$' in the definition of $\breve{\neg}z$ (recalling that, in an implicative lattice, $x \to y =1$ iff $x \leq y$). Compare this situation with that in Nelson's logic N4: unlike our case, N4 validates not only (cf) but also its converse axiom $\varphi \to \neg\neg \varphi$. Because of this, the twist structures for N4, which are also induced by implicative lattices, and also represent the values assigned to $(\varphi,\neg\varphi)$ over such lattices, are such that the interpretation of negation is {\em deterministic}, given as follows: $\breve{\neg}(z_1,z_2)=(z_2,z_1)$.\footnote{As a matter of fact, twist structures are algebraic structures and consequently all their operations are deterministic.} The fact that the value of the second coordinate --- the value to be assigned to $\neg\neg\varphi$ --- is $z_1$ reflects the validity of the law $\varphi \leftrightarrow \neg\neg\varphi$.

Motivated by this, we introduce the following  axiomatic extension of  $C_\omega$:

\begin{definition}
 The logic  $C_\omega^+$ is defined by the Hilbert calculus over $\Sigma_\omega$ obtained by adding to   $C_\omega$ axiom (ce): $\varphi \to \neg\neg\varphi$.
\end{definition}

\noindent
The logic $C_\omega^+$ is still paraconsistent, as it will be shown in Corollary~\ref{Cw+-paracons}. Given that $\neg\neg\varphi$ is equivalent to $\varphi$ in $C_\omega^+$, the multioperator for the corresponding swap structures is deterministic, and it is defined as in the twist structures for N4 above mentioned, see Definition~\ref{def:swap:Cw+} below.

Before introducing a class of  hyperalgebraic models for $C_\omega^+$ by adapting the results from $C_\omega$, it will be shown that it is not possible to characterize this logic by means of a single finite Nmatrix; in particular, it cannot be characterized by a single finite matrix. In order to do this, the original proof of G\"odel from 1932 stating that intuitionistic logic cannot be characterized by a single finite matrix (see~\cite{Godel1932}), and generalized to Nmatrices in~\cite{leme:con:lop:24}, will be adapted to  $C_\omega^+$.

For $n \geq 1$, let $G_n$ be the following formula over $\Sigma_\omega$ (taken from~\cite{Godel1932}):
$$G_n := \bigvee_{1 \leq i < j \leq n+1} (p_i \to p_j) \land (p_j \to p_i).$$
By adapting the proof given by G\"odel for intuitionistic logic, it will be proven that no formula $G_n$ can be proven  in $C_\omega^+$.

\begin{definition} \label{defMg}
  Let $\mathcal{M}_G$ be an infinite matrix with domain $\omega= \{0,1, 2, 3, \ldots\}$, where  $\{0,1\}$ is the set of designated values. The operators in $\mathcal{M}_G$ are defined  as follows, for every $x,y \in \omega$:
     $$x \vee y := \min(x,y); \hspace{1.4cm} x \wedge y := \max(x,y)$$
     $$
              x \rightarrow y = \left\{ \begin{array}{ll}
              			                                 0 & \textrm{if } x \geq y \\
                    			                           y & \textrm{otherwise}
																						\end{array} \right.
		  \hspace{1cm}
              \neg x = \left\{ \begin{array}{ll}
              			                                 0 & \mbox{if } x = 0 \\
                                                          2 & \mbox{if } x = 1 \\
                    			                           	1 & \textrm{otherwise}
																				\end{array} \right.
			$$
\end{definition}

\

\begin{proposition}
     $\mathcal{M}_G$ is a paraconsistent model of $C_\omega^+$ which invalidates every formula $G_n$, for $n \geq 1$.
\end{proposition}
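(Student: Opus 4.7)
My plan is to split the statement into three claims and address them in turn: (a) every axiom of $C_\omega^+$ is valid in $\mathcal{M}_G$ and $(\MP)$ preserves designation, hence $\mathcal{M}_G$ is a model; (b) some valuation witnesses the failure of explosion; (c) each $G_n$ admits a refuting valuation.

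For (a), I would first observe that the $\{\wedge,\vee,\to\}$-reduct of $\mathcal{M}_G$ is essentially the linearly ordered G\"odel--Dummett Heyting algebra on $\omega$ with the reverse of the natural ordering (so that $0$ is top and $\{0,1\}$ is an up-set of designated values), which automatically validates the positive intuitionistic axioms (AX1)--(AX8). Soundness of $(\MP)$ reduces to the observation that whenever $v(\alpha),v(\alpha\to\beta)\in\{0,1\}$, either $v(\alpha)\geq v(\beta)$ (so $v(\beta)\leq v(\alpha)\leq 1$) or $v(\alpha\to\beta)=v(\beta)\leq 1$. The three negation axioms (EM), (cf), (ce) would then be handled by case analysis on $x\in\{0\}$, $x\in\{1\}$, $x\geq 2$, using the $\neg$-table to tabulate $\neg x$ and $\neg\neg x$ and the $\to$-table to check that the resulting compound values lie in $\{0,1\}$.

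For (b), it suffices to exhibit one non-exploding valuation: pick distinct variables $p,q$ and set $v(p):=0$, $v(q):=2$. Then $v(p)=0$ and $v(\neg p)=\neg 0 = 0$ are both designated, while $v(q)=2$ is not, so $p,\neg p\not\models_{\mathcal{M}_G} q$. For (c), I would follow G\"odel's 1932 trick: given $n\geq 1$, set $v(p_i):=i+1$ for $1\leq i\leq n+1$, so that the $n+1$ values are pairwise distinct and all lie in $\{2,\ldots,n+2\}$. For any pair $i<j$, $v(p_i)<v(p_j)$ gives $v(p_j\to p_i)=0$ and $v(p_i\to p_j)=v(p_j)=j+1$, whence $v((p_i\to p_j)\wedge(p_j\to p_i))=\max(0,j+1)=j+1\geq 3$; taking $\vee=\min$ over all $\binom{n+1}{2}$ disjuncts yields $v(G_n)=3\notin\{0,1\}$, so $\not\models_{\mathcal{M}_G} G_n$.

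The main obstacle is purely bookkeeping: the case analysis for the negation axioms in (a) --- in particular the verification of (cf), where $\neg\neg x$ must be tracked across the three subcases of the $\neg$-table --- is the most delicate part, while parts (b) and (c) are one-line exhibitions of suitable valuations and no conceptual difficulty arises elsewhere.
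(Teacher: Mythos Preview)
Your plan coincides with the paper's: it too asserts (without detail) that $\mathcal{M}_G$ models $C_\omega^+$, observes paraconsistency via $\neg 0=0\in D$, and refutes $G_n$ by the valuation $v(p_i)=i$ (obtaining $v(G_n)=2$; your shift to $v(p_i)=i+1$ and $v(G_n)=3$ is cosmetic).

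There is, however, a genuine gap in step (a), and it lies exactly in the case analysis you flag as ``most delicate''. Your three cases $x=0$, $x=1$, $x\geq 2$ are too coarse for (cf): for every $x\geq 2$ one has $\neg x=1$ and hence $\neg\neg x=\neg 1=2$, so $(\neg\neg x)\to x=2\to x$, which equals $0$ when $x=2$ but equals $x\notin\{0,1\}$ whenever $x\geq 3$. Thus axiom (cf) is \emph{not} valid in $\mathcal{M}_G$, and the matrix as defined is not a model of $C_\omega^+$. The paper's one-line ``it is immediate'' glosses over the same failure, so you are inheriting the error rather than introducing it; still, actually carrying out the case analysis you propose would refute rather than confirm the claim. (Indeed, no redefinition of $\neg$ on this chain with $D=\{0,1\}$ can validate (EM), (cf) and (ce) simultaneously: for $x\geq 2$ the three axioms force $\neg x\in\{0,1\}$ together with $\neg\neg x=x$, which would require $\neg$ to map $\{0,1\}$ onto the infinite set $\{2,3,\ldots\}$.)
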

\begin{proof}  It is immediate to see that $\mathcal{M}_G$ is a model of $C_\omega^+$. It is paraconsistent, since $0 \in D$ and $\neg 0=0 \in D$. Now, consider the following valuation over $\mathcal{M}_G$:  $v(p_i) = i$, for every $1 \leq i < \omega$. By induction on $n\geq 1$, it is easy to prove that $v(G_n)=2$, for any  $n \geq 1$. Hence, $\mathcal{M}_G$ does not validate any formula $G_n$.     
\end{proof}

%

\begin{corollary}  \label{lem-god-0}
None of the formulas $G_n$ is provable in $C_\omega^+$.
\end{corollary}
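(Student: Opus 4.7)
The plan is to derive the corollary as a direct consequence of the preceding proposition, which does nearly all the work. Concretely, the preceding proposition establishes two facts about the matrix $\mathcal{M}_G$: first, that it is a model of $C_\omega^+$, meaning every axiom of $C_\omega^+$ takes a designated value under every valuation and the set of designated values $\{0,1\}$ is closed under the \textsf{MP} rule; and second, that for each $n \geq 1$ the specific valuation $v(p_i)=i$ yields $v(G_n)=2 \notin \{0,1\}$. These are exactly the ingredients needed.

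The first step is to invoke soundness of $C_\omega^+$ with respect to single-matrix semantics. This is standard and routine: since $C_\omega^+$ is defined by a Hilbert calculus over the axioms of $C_\omega$ together with (ce), and since by hypothesis $\mathcal{M}_G$ validates all of these axioms and is closed under \textsf{MP}, a straightforward induction on the length of a derivation shows that if $\vdash_{C_\omega^+} \varphi$ then $v(\varphi)$ is designated for every valuation $v$ over $\mathcal{M}_G$.

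The second step is to combine this with the explicit counter-valuation. Suppose for contradiction that $\vdash_{C_\omega^+} G_n$ for some $n \geq 1$. By the soundness argument just sketched, every valuation over $\mathcal{M}_G$ assigns $G_n$ a value in $\{0,1\}$. But the valuation $v(p_i)=i$ from the preceding proposition assigns $G_n$ the value $2$, which is not designated. This contradiction establishes that no $G_n$ is provable in $C_\omega^+$.

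I do not expect any substantive obstacle here: the real work was carried out in the preceding proposition (in particular, verifying that all axioms of $C_\omega^+$, including (EM), (cf), and (ce), are validated by the slightly unusual negation table of $\mathcal{M}_G$, and computing $v(G_n)=2$ by induction on $n$). The corollary itself is essentially just the contrapositive of soundness applied to that proposition.
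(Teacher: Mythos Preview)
Your proposal is correct and matches the paper's approach: the corollary is stated without proof in the paper, being an immediate consequence of the preceding proposition via soundness of $C_\omega^+$ with respect to the model $\mathcal{M}_G$. Your explicit unpacking of the soundness-plus-countermodel argument is exactly what the paper leaves implicit.
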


\begin{corollary}  \label{Cw+-paracons}
$C_\omega^+$ is a paraconsistent logic w.r.t. its negation $\neg$.
\end{corollary}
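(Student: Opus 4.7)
The plan is to read off the corollary directly from the preceding Proposition, which already supplies all the necessary semantic machinery. Recall that $\neg$ is a paraconsistent negation in $C_\omega^+$ exactly when the explosion schema fails, i.e.\ when there exist formulas $\varphi, \psi$ with $\{\varphi,\neg\varphi\} \not\vdash_{C_\omega^+} \psi$. My strategy is to produce such $\varphi, \psi$ by exhibiting a valuation over the matrix $\mathcal{M}_G$ of Definition~\ref{defMg} that designates $\varphi$ and $\neg\varphi$ but not $\psi$, and then invoke soundness.

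Concretely, I would take $\varphi = p_1$ and $\psi = p_2$ (two distinct variables), and define $v: \mathcal{P} \to \omega$ by $v(p_1) = 0$ and $v(p_2) = 2$, extending freely on the remaining variables. Then $v(p_1) = 0 \in D = \{0,1\}$ and, by the definition of $\neg$ in $\mathcal{M}_G$, $v(\neg p_1) = \neg 0 = 0 \in D$, while $v(p_2) = 2 \notin D$. Hence $\{p_1, \neg p_1\} \not\models_{\mathcal{M}_G} p_2$.

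Since the preceding Proposition asserts that $\mathcal{M}_G$ is a model of $C_\omega^+$, soundness of $C_\omega^+$ with respect to $\mathcal{M}_G$ gives the contrapositive passage from semantic invalidity to syntactic non-derivability, yielding $\{p_1, \neg p_1\} \not\vdash_{C_\omega^+} p_2$, as required. There is essentially no obstacle: the content is fully captured by the paraconsistency observation $0 \in D$ and $\neg 0 = 0 \in D$ already used in the proof of the previous Proposition, combined with the (standard) soundness half of the matrix semantics that is implicit in calling $\mathcal{M}_G$ a model of $C_\omega^+$.
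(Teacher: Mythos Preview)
Your proposal is correct and follows essentially the same approach as the paper: the corollary is immediate from the preceding Proposition, which already notes that $\mathcal{M}_G$ is a paraconsistent model of $C_\omega^+$ precisely because $0 \in D$ and $\neg 0 = 0 \in D$. You have simply made explicit the choice of a non-designated value for $p_2$ and the appeal to soundness, which the paper leaves implicit.
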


\begin{proposition} \label{lem-god-1}
Suppose that $\mathcal{M} = \langle \mathsf{A},D \rangle$ is a finite Nmatrix such that the domain $A$ of $\mathsf{A}$ has exactly $n\geq 1$ elements, and $\mathcal{M}$ is a model for $C_\omega^+$. Then, the formula $G_n$ is valid in  $\mathcal{M}$.
\end{proposition}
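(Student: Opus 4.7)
The approach is a direct adaptation of Gödel's classical pigeonhole argument showing that intuitionistic logic has no finite characteristic matrix, lifted to the Nmatrix setting and to $C_\omega^+$ (following the general strategy already used in~\cite{leme:con:lop:24}). Fix a valuation $v$ over $\mathcal{M}$; the goal is to show $v(G_n)\in D$.

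First I would establish the following auxiliary fact, which captures the role of $\vdash_{C_\omega^+}p\to p$ in the non-deterministic setting: for every $a\in A$, one has $a\to^{\mathsf{A}} a\subseteq D$. To prove it, pick any $c\in a\to^{\mathsf{A}} a$ and build a valuation $v'$ over $\mathcal{M}$ with $v'(p_1)=a$ and $v'(p_1\to p_1)=c$, extending arbitrarily to the remaining variables and subformulas in accordance with the hyperoperations of $\mathsf{A}$ (such a choice is possible because no hyperoperation returns the empty set). Soundness of $C_\omega^+$ with respect to $\mathcal{M}$ applied to the theorem $p_1\to p_1$ forces $c=v'(p_1\to p_1)\in D$.

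Next I would record two propagation facts, obtained in the same spirit by applying soundness plus the semantic version of \textbf{(MP)} to suitable axioms: if $v(\alpha),v(\beta)\in D$, then $v(\alpha\land\beta)\in D$ (via \textbf{(AX3)}), and if $v(\alpha)\in D$ (respectively $v(\beta)\in D$), then $v(\alpha\lor\beta)\in D$ (via \textbf{(AX6)}/\textbf{(AX7)}). Each of these is immediate: from $v(\alpha\to(\beta\to(\alpha\land\beta)))\in D$, $v(\alpha)\in D$ and $v(\beta)\in D$, two applications of the fact that $v(\gamma)\in D$ and $v(\gamma\to\delta)\in D$ imply $v(\delta)\in D$ yield the first, and analogously for the second.

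With these in hand, I would close the argument by the pigeonhole principle: the $n+1$ values $v(p_1),\dots,v(p_{n+1})$ lie in a set of size $n$, so there exist $1\le i<j\le n+1$ with $v(p_i)=v(p_j)=:a$. Then $v(p_i\to p_j)$ and $v(p_j\to p_i)$ both lie in $a\to^{\mathsf{A}}a$, hence both are in $D$ by the auxiliary fact; by the propagation facts, $v((p_i\to p_j)\land(p_j\to p_i))\in D$, and threading this designated disjunct through the iterated disjunction defining $G_n$ yields $v(G_n)\in D$, as desired. The main technical obstacle is the auxiliary fact $a\to^{\mathsf{A}}a\subseteq D$: one must be careful that the requisite valuation $v'$ realising a prescribed $c\in a\to^{\mathsf{A}}a$ as $v'(p_1\to p_1)$ can actually be constructed inside the Nmatrix, which relies on the freedom afforded by the non-deterministic semantics together with non-emptiness of every hyperoperation's output.
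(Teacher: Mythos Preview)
Your proposal is correct and follows essentially the same approach as the paper's proof: both invoke the three key facts $a\,\tilde{\to}\,a\subseteq D$, $x\,\tilde{\wedge}\,y\subseteq D$ whenever $x,y\in D$, and $x\,\tilde{\vee}\,y\subseteq D$ whenever $x\in D$ or $y\in D$, then apply pigeonhole to the values $v(p_1),\dots,v(p_{n+1})$. The only difference is cosmetic: the paper states facts (i)--(iii) directly at the level of the hyperoperations (as consequences of $\mathcal{M}$ modeling positive intuitionistic logic) and leaves their justification implicit, whereas you spell out how each is obtained by building a witnessing valuation and invoking soundness.
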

\begin{proof}
    It follows by adapting the proof for intuitionistic logic found in~\cite{leme:con:lop:24}. For any connective $\#$ of $\Sigma_\omega$ let $\tilde \#$ be its interpretation in $\mathcal{M}$. Since  $\mathcal{M}$ models positive intuitionistic logic and  $\varphi \to \varphi$ is a theorem of this logic then, for every $x,y \in A$ it holds: (i)~$x \,\tilde{\vee}\, y \subseteq D$ if either $x \in D$ or $y \in D$; (ii)~$x \,\tilde{\wedge}\, y \subseteq D$ if both $x \in D$ and $y \in D$; and (iii)~$x \,\tilde{\to}\, x \subseteq D$. 
Let $v$ be a valuation over $\mathcal{M}$. Given that $A$ has $n\geq 1$ elements,  by the  Pigeonhole Principle it follows that $v(p_i)=v(p_j)$ for some $1 \leq i < j \leq n+1$. By the observations(i)-(iii),   $v((p_i \to p_j) \land (p_j \to p_i)) \in  D$ and so $v(G_n) \in \mathcal{D}$.
\end{proof}

\begin{theorem} \label{unchar-Cw+}
The logic $C_\omega^+$ cannot be characterized by a single finite-valued Nmatrix.
\end{theorem}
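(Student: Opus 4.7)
The plan is to combine Corollary~\ref{lem-god-0} and Proposition~\ref{lem-god-1} via a straightforward contradiction, following the classical G\"odel-style strategy for the corresponding result about intuitionistic logic.

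First, I would suppose for contradiction that there exists a single finite Nmatrix $\mathcal{M}=\langle\mathsf{A},D\rangle$ with domain $A$ of cardinality exactly $n\geq 1$ such that $\mathcal{M}$ characterizes $C_\omega^+$, in the sense that $\vdash_{C_\omega^+}\varphi$ iff $\models_{\mathcal{M}}\varphi$ for every $\varphi\in For(\Sigma_\omega)$. The soundness direction of this characterization tells us immediately that $\mathcal{M}$ is a model of $C_\omega^+$, so the hypothesis of Proposition~\ref{lem-god-1} is met and we obtain $\models_{\mathcal{M}} G_n$.

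Next, by the completeness direction of the assumed characterization, $\models_{\mathcal{M}} G_n$ yields $\vdash_{C_\omega^+} G_n$. This directly contradicts Corollary~\ref{lem-god-0}, which asserts that no $G_n$ is provable in $C_\omega^+$ (the obstruction being the infinite paraconsistent matrix $\mathcal{M}_G$ of Definition~\ref{defMg}, which invalidates every $G_n$ while modeling $C_\omega^+$).

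No genuinely hard step is expected: all the real work has already been distilled into the infinite matrix $\mathcal{M}_G$ and the pigeonhole argument of Proposition~\ref{lem-god-1}. The only mild subtlety is to phrase the characterization hypothesis properly so that it supplies both halves used above. One should note that, as is standard for Nmatrices, characterization could be stated at the consequence-relation level ($\Gamma\vdash_{C_\omega^+}\varphi$ iff $\Gamma\models_{\mathcal{M}}\varphi$); specializing to $\Gamma=\emptyset$ is enough for this argument, since $G_n$ is a candidate theorem and the contradiction is obtained at the level of validity.
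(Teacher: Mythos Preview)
Your proposal is correct and follows essentially the same argument as the paper: assume a finite characterizing Nmatrix with $n$ elements, invoke Proposition~\ref{lem-god-1} to get validity of $G_n$, then use the completeness half of the characterization to derive $\vdash_{C_\omega^+} G_n$, contradicting Corollary~\ref{lem-god-0}. Your version is slightly more explicit in noting that the soundness direction is what guarantees $\mathcal{M}$ is a model of $C_\omega^+$ (so that Proposition~\ref{lem-god-1} applies), but otherwise the two proofs coincide.
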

\begin{proof}
Suppose by contradiction that there exists  a finite Nmatrix $\mathcal{M}$ with exactly $n\geq 1$ elements such that $\mathcal{M}$  characterizes the logic $C_\omega^+$. In particular, any formula valid in $\mathcal{M}$ can be proven in $C_\omega^+$. By Proposition~\ref{lem-god-1}, the formula $G_n$ is valid in  $\mathcal{M}$, and so $G_n$ is provable in $C_\omega^+$. But this contradicts Corollary~\ref{lem-god-0}. Therefore, $C_\omega^+$ cannot be characterized by a single finite-valued Nmatrix.
\end{proof}

\noindent In order to obtain an adequate semantics for  $C_\omega^+$, let us now adapt our hyperalgebraic framework to this  system.

\begin{definition} [Hyperalgebras  for $C_\omega^+$]
A {\em Hyperalgebra for  $C_\omega^+$} (or {\em hyper  $C_\omega^+$ algebra}, or simply a HC$_\omega^+$A) is a hyper $C_\omega$ algebra $\mathsf H=\langle H,\curlywedge,\curlyvee,\multimap, \div  \rangle$  such that condition (H2) is replaced by the following:
\begin{description}
    \item[(H3)] $y \in \div x$ and $w\in \div y$ implies that  $w \equiv x$
\end{description}
    or, equivalently, by the condition
\begin{description}
    \item[(H3')] $\div \div x \equiv x$.
\end{description}
\end{definition}

\noindent
The consequence relation with respect to  HC$_\omega^+$As is given by restricting Definition~\ref{def-sem-SHCw}  to  HC$_\omega^+$As, and it will be denoted by $\models_{\mathbb{HC}_\omega^+}$.

The swap structures for $C_\omega^+$ require slight adjustments with respect to the ones for $C_\omega$:

\begin{definition} [Swap structures for $C_\omega^+$] \label{def:swap:Cw+}  Let $\mathsf L=\langle L, \wedge,\vee,\to\rangle$ be an implicative lattice.  The swap structure for $C_\omega^+$ over $\mathsf L$, denoted by  $\Sw_0^+(\mathsf L)$, is defined as the swap structure  $\Sw_0(\mathsf L)$ for $C_\omega$ over $\mathsf L$ introduced in Definition~\ref{def-swap-Cw}, by replacing the multioperator $\breve{\neg}$ by the following:
\begin{align*}
    \bar{\neg} z&:=\{u\in S_{\mathsf L} \ : \ u_1=z_2 \mbox{ and }  u_2 = z_1  \}= \{(z_2,z_1)\}.
\end{align*}
\end{definition}

\noindent This means that the hyperoperator for negation in the swap structures for $C_\omega^+$ is deterministic, and it is defined as in the case of twist structures for Nelson's N4.

Let  $\mathcal{M}_0^+(\mathsf L)=\langle \Sw_0^+(\mathsf L),D_{\mathsf L} \rangle$ be  the Nmatrix associated to  $\Sw_0^+(\mathsf L)$ as in the case of $C_\omega$, and let $\models_{C_\omega^+}^{SW}$ be the consequence relation generated by this class of Nmatrices. Then, the following result easily follows from Theorem~\ref{Sound-comple0} and the definitions above:

\begin{theorem} [Soundness and completeness of  $C_\omega^+$ w.r.t. hyperstructures semantics, version~1] \label{Sound-compleCw+0} \ \\
Let $\Gamma \cup \{\varphi\}$ be a set of formulas over $\Sigma_\omega$. The following assertions are equivalent:
\begin{enumerate}
    \item $\Gamma \vdash_{C_\omega^+} \varphi$;
    \item $\Gamma \models_{\mathbb{HC}_\omega^+} \varphi$;
    \item $\Gamma \models_{C_\omega^+}^{SW} \varphi$.
\end{enumerate}
\end{theorem}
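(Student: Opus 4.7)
My plan is to treat the statement as the direct $C_\omega^+$-counterpart of Theorem~\ref{Sound-comple0} and to adapt that proof, touching only those points where axiom (ce) and the strengthened negation condition (H3) force a divergence from the $C_\omega$ argument.

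For (1)~$\Rightarrow$~(2) I would exploit the fact that every HC$_\omega^+$A is in particular a HC$_\omega$A, so soundness of the axioms (AX1)--(AX8), (EM), (cf) and of MP is inherited directly from Theorem~\ref{Sound-comple0}. It then suffices to verify axiom (ce): $\varphi\to\neg\neg\varphi$ in every Nmatrix $\mathcal{M}_{\mathsf H}$ with $\mathsf H\in\mathbb{HC}_\omega^+$. Given a valuation $v$, the Nmatrix clauses yield $v(\neg\neg\varphi)\in\div v(\neg\varphi)$ with $v(\neg\varphi)\in\div v(\varphi)$; applying (H3) twice gives $v(\neg\neg\varphi)\equiv v(\varphi)$, hence $v(\varphi)\preceq v(\neg\neg\varphi)$, and Proposition~\ref{prop-SIHL} places $v(\varphi\to\neg\neg\varphi)$ in $\top$. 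For (2)~$\Rightarrow$~(3) I would check that every swap structure $\Sw_0^+(\mathsf L)$ is a HC$_\omega^+$A. Its reduct coincides with $\Sw_0(\mathsf L)$, whose IHL structure and validity of (H1) are already known, so the only new verification concerns the deterministic negation $\bar{\neg}z=\{(z_2,z_1)\}$: one immediately has $\bar{\neg}\bar{\neg}z=\{(z_1,z_2)\}=\{z\}$, establishing (H3'); and (H1) still holds because any $u\in z\,\breve{\vee}\,\bar{\neg}z$ satisfies $u_1=z_1\vee z_2=1$, placing $u$ in $D_{\mathsf L}$.

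For (3)~$\Rightarrow$~(1), completeness, my plan is to mimic the canonical model construction used in the proof of Theorem~\ref{Sound-comple}. Assuming $\Gamma\not\vdash_{C_\omega^+}\varphi$, Lindenbaum--\L o\'s yields a $\varphi$-saturated set $\Delta$ in $C_\omega^+$, and I would build a genuine implicative lattice $\mathsf L_\Delta$ by quotienting $For(\Sigma_\omega)$ under the equivalence $\alpha\equiv_\Delta\beta$ iff $\Delta\vdash_{C_\omega^+}\alpha\leftrightarrow\beta$; positive intuitionistic logic, which both $C_\omega$ and $C_\omega^+$ extend, guarantees that the quotient forms a bounded implicative lattice. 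The canonical map $v_\Delta:For(\Sigma_\omega)\to S_{\mathsf L_\Delta}$ defined by $v_\Delta(\alpha)=([\alpha],[\neg\alpha])$ lands in $S_{\mathsf L_\Delta}$ thanks to (EM). The verification that $v_\Delta$ is a valuation over $\mathcal{M}_0^+(\mathsf L_\Delta)$ for the connectives $\land,\vee,\to$ is identical to the $C_\omega$ case, and the closing observation --- $v_\Delta$ designates every formula of $\Gamma$ while $\varphi$ remains undesignated because $\Delta\not\vdash_{C_\omega^+}\varphi$ --- yields $\Gamma\not\models^{SW}_{C_\omega^+}\varphi$ as required.

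The single genuinely new point, which I expect to be the main obstacle, is the negation clause of the canonical valuation: $v_\Delta(\neg\alpha)=([\neg\alpha],[\neg\neg\alpha])$ must equal the unique element of $\bar{\neg}v_\Delta(\alpha)=\{([\neg\alpha],[\alpha])\}$, which reduces to the identity $[\neg\neg\alpha]=[\alpha]$ in $\mathsf L_\Delta$. This is precisely where axiom (ce) is essential: combined with (cf) it gives $\vdash_{C_\omega^+}\alpha\leftrightarrow\neg\neg\alpha$, collapsing the two equivalence classes. In the $C_\omega$ case this step was absorbed by the laxer hyperoperator $\breve{\neg}$ (whose second coordinate need only satisfy $u_2\leq z_1$), so the passage to equivalence classes was not mandatory there, whereas here the deterministic shape of $\bar{\neg}$ makes it indispensable.
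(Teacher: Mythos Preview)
Your proposal is correct and follows essentially the same approach as the paper, which merely remarks that the result ``easily follows from Theorem~\ref{Sound-comple0} and the definitions above'' without spelling out the adaptation. You have supplied precisely the details one would expect: soundness of (ce) via (H3'), the verification that $\Sw_0^+(\mathsf L)$ is a HC$_\omega^+$A through $\bar{\neg}\bar{\neg}z=\{z\}$, and the Lindenbaum-style completeness argument in which the deterministic negation forces the identity $[\neg\neg\alpha]=[\alpha]$, now available thanks to (ce) together with (cf). One cosmetic point: the quotient $\mathsf L_\Delta$ need not be \emph{bounded} below (indeed $C_\omega^+$ has no bottom formula), but only an implicative lattice with top is required for Definition~\ref{def-swap-Cw}, so this does not affect your argument.
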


\noindent We will proceed now to adapt our previous results to hyper swap structures for $C_\omega^+$, including  the definition of an equivalence of suitable categories.

\begin{definition} [Hyper Swap structures for $C_\omega^+$] Let $\mathsf L=\langle L, \curlywedge,\curlyvee,\multimap\rangle$ be an IHL.
The hyper  swap structure for $C_\omega^+$ over $\mathsf L$, denoted by   $\mathsf S^+(\mathsf L)$, is defined as the hyper swap structure  $\mathsf S(\mathsf L)$ for $C_\omega$ over $\mathsf L$ introduced in Definition~\ref{def-sem-swap-Cw}, by replacing the multioperator $\div$ by the following:
\begin{align*}
    \bar{\div} z&:=\{u\in S^{C_\omega}_{\mathsf L} \ : \ u_1=z_2 \mbox{ and }  u_2 \equiv z_1 \}.
\end{align*}
\end{definition}

\noindent Observe that, for $z\in S^{C_\omega}_{\mathsf L}$,
    \[
        \bar{\div}\bar{\div} z=\{u\in S^{C_\omega}_{\mathsf L} \ : \ u_1\equiv z_1\mbox{ and }u_2\equiv z_2\}.
    \]

The consequence relation $\models_{C_\omega^+}^{HSW}$ induced by hyper swap structures for  $C_\omega^+$ is defined analogously to the case of $C_\omega$ (recall Definition~\ref{def-sem-swap-Cw}). The proof of the following result is straightforward by adapting the proof of Theorem~\ref{Sound-comple} according to the definitions above:

\begin{theorem} [Soundness and completeness of  $C_\omega^+$ w.r.t. hyperstructures semantics, version~2] \label{Sound-comple3} \ \\
Let $\Gamma \cup \{\varphi\}$ be a set of formulas over $\Sigma_\omega$. The following assertions are equivalent:
\begin{enumerate}
    \item $\Gamma \vdash_{C_\omega^+} \varphi$;
    \item $\Gamma \models_{\mathbb{HC}_\omega^+} \varphi$;
    \item $\Gamma \models_{C_\omega^+}^{HSW} \varphi$.
\end{enumerate}
\end{theorem}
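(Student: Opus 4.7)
The plan is to mirror the proof of Theorem~\ref{Sound-comple} with three adjustments tracking the switch from the condition $\div\div x \preceq x$ to its symmetric version $\div\div x \equiv x$, and the corresponding replacement of the non-deterministic $\div$ with the tighter $\bar{\div}$ in hyper swap structures. I will treat the three implications separately.

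For (1) $\Rightarrow$ (2), soundness, I would inherit the soundness proof for $C_\omega$ with respect to HC$_\omega$As given in~\cite{con:gol:rob:2025}, and then check that the new axiom (ce): $\varphi \to \neg\neg\varphi$ is validated in every HC$_\omega^+$A. This reduces to observing that (H3') gives $x \preceq \div\div x$ (the half not already provided by (H2')), so that for any valuation $v$ we have $v(\varphi) \preceq \div\div v(\varphi)$, i.e., $v(\varphi \to \neg\neg\varphi) \equiv \top$. All other axioms are validated exactly as in the $C_\omega$ case, since the underlying IHL reduct is unchanged.

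For (2) $\Rightarrow$ (3), I would show that each $\mathsf S^+(\mathsf L)$ is a HC$_\omega^+$A and that its set of designated values coincides with $\mathsf{Max}(S^{C_\omega}_{\mathsf L})$. Most of Proposition~\ref{swap-SHCw} transfers verbatim: the IHL structure on $S^{C_\omega}_{\mathsf L}$ is defined through the first coordinate and does not depend on the negation. The only new work is to verify (H3'). But by the definition of $\bar{\div}$, the displayed identity
\[
\bar{\div}\bar{\div} z = \{u \in S^{C_\omega}_{\mathsf L} \ : \ u_1 \equiv z_1 \text{ and } u_2 \equiv z_2\}
\]
immediately yields $\bar{\div}\bar{\div} z \equiv z$ in $\mathsf S^+(\mathsf L)$, since the preorder on snapshots is that of the first coordinates in $\mathsf L$. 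Hence $\mathsf S^+(\mathsf L) \in \mathbb{HC}_\omega^+$ and any consequence valid on all HC$_\omega^+$As is valid on every $\mathcal{M}^+(\mathsf L)$.

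For (3) $\Rightarrow$ (1), completeness, I would reuse the Lindenbaum–Łoś construction from Theorem~\ref{Sound-comple}. Given $\Gamma \nvdash_{C_\omega^+} \varphi$, take a $\varphi$-saturated set $\Delta \supseteq \Gamma$; the preorder $\alpha \preceq_\Delta \beta$ iff $\Delta \vdash_{C_\omega^+} \alpha \to \beta$ induces an IHL $\mathsf L_\Delta$ exactly as before (the positive base is unchanged). Form $\mathsf S^+(\mathsf L_\Delta)$ and the canonical valuation $v_\Delta(\alpha) = (\alpha, \neg\alpha)$. The clauses for $\curlywedge, \curlyvee, \multimap$ are verified as in the $C_\omega$ case. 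The decisive step, and the only genuinely new one, is the clause for $\bar{\div}$: one needs
\[
v_\Delta(\neg\alpha) = (\neg\alpha, \neg\neg\alpha) \in \bar{\div}\, v_\Delta(\alpha),
\]
which by the new definition of $\bar{\div}$ requires $\neg\neg\alpha \equiv_\Delta \alpha$. Here axiom (cf) gives $\Delta \vdash \neg\neg\alpha \to \alpha$ and the new axiom (ce) gives the converse $\Delta \vdash \alpha \to \neg\neg\alpha$, so equivalence holds. The rest (that $v_\Delta$ distinguishes $\Gamma$ from $\varphi$ via $D^{C_\omega}_{\mathsf L_\Delta}$) is identical to the $C_\omega$ proof.

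The only genuine obstacle is the negation clause in the completeness step: the modified $\bar{\div}$ is strictly tighter than $\div$, so the canonical valuation could fail to be a valuation over $\mathcal{M}^+(\mathsf L_\Delta)$ unless double negation is exactly equivalent to the original formula in $\Delta$. This is precisely what axiom (ce), added to (cf), guarantees, and is the conceptual reason why the symmetric condition (H3') replaces (H2') in the definition of HC$_\omega^+$As.
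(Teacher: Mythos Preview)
Your proposal is correct and follows exactly the approach the paper intends: the paper merely states that the result ``is straightforward by adapting the proof of Theorem~\ref{Sound-comple} according to the definitions above,'' and your three implications carry out precisely that adaptation, with the only substantive changes being the verification of (ce) via (H3') for soundness, the check that $\bar{\div}\bar{\div} z \equiv z$ in $\mathsf S^+(\mathsf L)$, and the use of (ce) together with (cf) to obtain $\neg\neg\alpha \equiv_\Delta \alpha$ in the canonical valuation.
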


\noindent Finally, an equivalence of categories can be obtained, by using enriched hyper algebras once again.

\begin{definition}[Enriched Hyper C$_\omega^+$ Algebras] 
    Let $\mathsf A=\langle A,\curlywedge,\curlyvee,\multimap,\div\rangle$ be a HC$_\omega^+$A. We say that $\mathsf A$ is an {\em enriched hyper C$_\omega^+$ algebra (EHC$_\omega^+$A)} if it an EHC$_\omega$A, i.e., it satisfies axioms E0-E4 from Definition~\ref{def:EHCw}.
\end{definition}

\noindent Let $\mathsf A$ be an EHC$_\omega^+$A. By using the notation from  Definition~\ref{def:EHCw} let $[x]:=\{y \ : \ x\sim y\}$, for $x\in A$, and let $$\mathsf U^+(\mathsf A):=A/{\sim}=\{[x] \ : \ x\in A\}.$$
Let \textbf{EHC$_\omega^+$A} be the full subcategory of  \textbf{EHC$_\omega$A} where the objects are EHC$_\omega^+$As. Observe that the hyper swap construction for $C_\omega^+$ induces a functor $\mathsf S^+:\textbf{IHL}\rightarrow \textbf{EHC}_\omega^+\textbf{A}$. In turn, it is easy to adapt Proposition~\ref{functor U- Cw} to $C_\omega^+$ obtaining, by using the new definitions, a functor $\mathsf U^+:\textbf{EHC}_\omega^+\textbf{A}\rightarrow \textbf{IHL}$.  By adapting the proof of Theorem~\ref{equivalence-03} and by the definitions introduced above,  the following result is obtained:

\begin{theorem}
    The functors $\mathsf S^+:\textbf{IHL}\rightarrow \textbf{EHC}_\omega^+\textbf{A}$ and $\mathsf U^+:\textbf{EHC}_\omega^+\textbf{A}\rightarrow \textbf{IHL}$  establish an equivalence of categories.
\end{theorem}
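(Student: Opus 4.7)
The plan is to mirror the proof of Theorem~\ref{equivalence-03} by establishing two natural isomorphisms $\Phi^+: 1_{\textbf{IHL}} \Rightarrow \mathsf U^+ \circ \mathsf S^+$ and $\Psi^+: 1_{\textbf{EHC}_\omega^+\textbf{A}} \Rightarrow \mathsf S^+ \circ \mathsf U^+$, adapting Theorems~\ref{equivalence-01} and~\ref{equivalence-02} to the $C_\omega^+$ setting. Before doing so, I would verify that both functors land in the claimed subcategories. For $\mathsf S^+$, one checks that $\mathsf S^+(\mathsf L)$ satisfies the strengthened axiom (H3'), i.e.\ $\bar{\div}\bar{\div} z \equiv z$: this is immediate from the displayed identity $\bar{\div}\bar{\div} z = \{u \in S^{C_\omega}_{\mathsf L} : u_1 \equiv z_1 \text{ and } u_2 \equiv z_2\}$ given in the excerpt, since $z$ itself lies in this set and the preorder of $\mathsf S^+(\mathsf L)$ is governed by first coordinates. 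For $\mathsf U^+$, the construction is formally identical to $\mathsf U$ from Proposition~\ref{functor U- Cw}; since that proof uses only axioms E0--E4 and the IHL reduct, $\mathsf U^+(\mathsf A)$ is a well-defined IHL.

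Next, I would define $\Phi^+_{\mathsf L}: \mathsf L \to \mathsf U^+(\mathsf S^+(\mathsf L))$ exactly as in Theorem~\ref{equivalence-01}, setting $\Phi^+_{\mathsf L}(x) := [(x,y)]$ for some $y$ with $x \curlyvee y \equiv \top$. The verbatim argument carries over, because it concerns only the IHL reduct and the congruence $\sim$, neither of which changes when moving from $C_\omega$ to $C_\omega^+$; naturality in $\mathsf L$ follows by the same diagrammatic verification.

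The step requiring genuine adaptation is the construction of $\Psi^+_{\mathsf A}: \mathsf A \to \mathsf S^+(\mathsf U^+(\mathsf A))$ for an EHC$_\omega^+$A $\mathsf A$. Setting $\Psi^+_{\mathsf A}(x) := ([x],[z])$ for some $z \in \div x$, the proofs of well-definedness, preservation of the IHL operations, injectivity (via E4), and surjectivity (via E3) transfer directly from Theorem~\ref{equivalence-02}. The novel verification is compatibility with the near-deterministic $\bar{\div}$: given $z \in \div x$, one must show $\Psi^+_{\mathsf A}(z) = ([z],[z']) \in \bar{\div}\,\Psi^+_{\mathsf A}(x)$ for any $z' \in \div z$, where $\bar{\div}([x],[x']) = \{([x'],[u]) : [u] \equiv [x]\}$ and $x' \in \div x$. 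The first-coordinate condition $[z] = [x']$ follows because $z, x' \in \div x$ forces $z \sim x'$ by E2. The second-coordinate condition $[z'] \equiv [x]$ is precisely where the strengthened axiom (H3') intervenes: since $z' \in \div z$ and $z \in \div x$, we have $z' \in \div \div x$, so (H3') gives $z' \equiv x$, hence $[z'] \equiv [x]$ in $\mathsf U^+(\mathsf A)$.

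The main obstacle, therefore, is this calibration between (H3') and the definition of $\bar{\div}$ in the hyper swap structure for $C_\omega^+$: one must check that strengthening $\div\div x \preceq x$ to $\div\div x \equiv x$ is exactly what is needed to make $\Psi^+_{\mathsf A}$ a morphism of EHC$_\omega^+$As. Once this compatibility is secured, naturality of $\Psi^+$ is a routine diagram chase identical to the one in Theorem~\ref{equivalence-02}, and combining $\Phi^+$ and $\Psi^+$ yields the desired equivalence of categories.
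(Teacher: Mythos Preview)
Your assertion that the congruence $\sim$ ``does not change when moving from $C_\omega$ to $C_\omega^+$'' is incorrect, and this undermines the whole adaptation. In $\mathsf S(\mathsf L)$, Remark~\ref{enriched-rem}(1) shows that $z \sim z'$ iff $z_1 = z'_1$, because membership in $\div w$ constrains only first coordinates. In $\mathsf S^+(\mathsf L)$, however, $\bar{\div} w = \{u : u_1 = w_2 \mbox{ and } u_2 \equiv w_1\}$ also pins down second coordinates up to $\equiv$, so $z, z' \in \bar{\div} w$ forces $z_1 = z'_1$ \emph{and} $z_2 \equiv z'_2$. The relation $\sim$ on $\mathsf S^+(\mathsf L)$ is therefore strictly finer than on $\mathsf S(\mathsf L)$.

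This breaks several of the steps you say carry over verbatim. First, $\Phi^+_{\mathsf L}(x) := [(x,y)]$ is no longer well-defined: two witnesses $y, y'$ with $x \curlyvee y \equiv \top \equiv x \curlyvee y'$ but $y \not\equiv y'$ now give distinct $\sim$-classes. Second, axiom E3 can fail for $\mathsf S^+(\mathsf L)$: take $\mathsf L$ to be the three-element chain $\{0 < a < 1\}$, and set $x = (1,0)$, $y = (1,a)$; then $x \curlyvee y \equiv \top$, but $x \sim z$ forces $z = (1,0)$, whence $\bar{\div} z = \{(0,1)\}$ and $y \not\sim (0,1)$. Thus $\mathsf S^+(\mathsf L)$ is not an EHC$_\omega^+$A and $\mathsf S^+$ does not land in the claimed target category --- your check of (H3') alone is not enough. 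Third, even at the level of cardinalities, if $\mathsf L$ is a poset then each $\sim$-class in $\mathsf S^+(\mathsf L)$ is a singleton, so $|\mathsf U^+(\mathsf S^+(\mathsf L))| = |S^{C_\omega}_{\mathsf L}| > |\mathsf L|$ in general and $\Phi^+_{\mathsf L}$ cannot be a bijection. The paper's own treatment simply says to ``adapt'' Theorem~\ref{equivalence-03}, but the adaptation you sketch does not go through as written; some recalibration of either $\bar{\div}$ or the enriched axioms for the $C_\omega^+$ case is required.
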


\section{Final Remarks} \label{sect:final-remarks}

In this paper we introduce the novel notion of hyper swap structures, which are basically swap structures (i.e., hyperalgebras of a special kind), but defined over hyperalgebras instead of ordinary algebras. The aim of this generalization is to transfer, to the hyperalgebraic context, the Kalman functor associated to twist structures in the framework of algebraic logic. As a case example, we define hyper swap structures for da Costa paraconsistent logic $C_\omega$, which are defined over Sette implicative hyperlattices. As expected, the logic $C_\omega$ is semantically characterized by this class of hyperalgebras (see Theorem~\ref{Sound-comple}),  just as it is by the class of swap structures induced by implicative lattices, as recently proven in~\cite[Theorem~1]{con:gol:rob:2025}.  

A key feature of our construction is that the class of hyper swap structures for $C_\omega$  can be abstracted via the notion of enriched hyperalgebras for $C_\omega$. Specifically, every enriched hyperalgebra admits a representation as a hyper swap structure by means of the Kalmar functor $\mathsf S$ and its inverse functor $\mathsf U$ introduced here. We obtained analogous results for two interesting axiomatic extensions of $C_\omega$: $C_{min}$, introduced in~\cite{car:mar:99}, and  the logic $C_\omega^+$.
This mirrors the standard twist constructions, such as the Kalmar functor linking implicative lattices and twist structures for Nelson’s logic N4 (see~\cite[Chapter~8]{odin:08}), where twist structures for N4 are abstracted to the variety of N4-lattices. An important direction for future research would be to establish a more intrinsic axiomatization of the class of enriched hyperalgebras for $C_\omega$. 

Extending the framework of hyper swap structures to other non-classical logics --- along with defining an appropriate class of enriched hyperalgebras --- constitutes a natural direction for future research. In this context, we are currently applying our techniques to various paraconsistent logics within the family of LFIs.

\

\

\noindent {\bf Acknowledgments:}
Coniglio acknowledges support by an individual research grant from the National Council for Scientific and Technological Development (CNPq, Brazil), grant 309830/2023-0. All the authors were supported by the S\~ao Paulo Research Foundation (FAPESP, Brazil), thematic project {\em Rationality, logic and probability -- RatioLog}, grant  2020/16353-3. Roberto was supported by a post-doctoral grant from FAPESP, grant 2024/18577-7.

\bibliographystyle{plain}

\end{document}